\definecolor{mygreen}{rgb}{0.1,0.75,0.2}
 \newtheorem{thm}{Theorem}[section]
 \newtheorem{lem}[thm]{Lemma}
 \newtheorem{prop}[thm]{Proposition}
 \theoremstyle{definition}
 \newtheorem{defn}{Definition}
 \theoremstyle{remark}
 \newtheorem{rem}{Remark}
 \numberwithin{equation}{section}
\newcommand{\la}{\langle}
\newcommand{\ra}{\rangle}
\newcommand{\pt}{\partial}
\newcommand{\eps}{\varepsilon}
\newcommand{\sij}{{ij}}
\newcommand{\ud}{\,\mathrm{d}}
\newcommand{\el}{E_{\mathrm{els}}}
\newcommand{\elc}{\mathcal{C}_{\mathrm{els}}}
\newcommand{\hel}{\hat{E}_{\mathrm{els}}(\bm \varphi; \mathbf u)}
\newcommand{\helpsi}{\hat{E}_{\mathrm{els}}(\bm \psi; \mathbf u)}
\newcommand{\eg}{E_{\Gamma}}
\newcommand{\heg}{\hat{E}_{\Gamma}( \varphi_1; u_1)}
\newcommand{\hegu}{\hat{E}_{\Gamma}(u_1)}
\newcommand{\egg}{E_{\Gamma_e}}
\newcommand{\eggc}{\mathcal{C}_{\Gamma_e}}
\newcommand{\hegg}{\hat{E}_{\Gamma_e}(\varphi_1; u_1)}
\newcommand{\het}{\hat{E}_{\mathrm{total}}(\bm \varphi; \mathbf u)}
\newcommand{\hetpsi}{\hat{E}_{\mathrm{total}}( \bm \psi; \mathbf u)}
\newcommand{\epss}{\eps^*}
\newcommand{\epsv}{\eps_{\varphi}}
\newcommand{\sigs}{\sigma^*}
\newcommand{\sigvv}{\sigma_{\varphi}}
\newcommand{\fcc}{f\circ u_1^+}
\newcommand{\backg}{\backslash\Gamma}
\newcommand{\8}{\infty}
\newcommand{\F}{F}
\newcommand{\ff}{\dot{\Lambda}^{s+\frac12}}
\newcommand{\ffn}{\dot{\Lambda}^{s}}
\newcommand{\ptf}{(-\pt_{xx})^{\frac{1}{2}}}
\newcommand{\well}{W}
\begin{document}

\title{Mathematical validation of the Peierls--Nabarro model for edge dislocations}

\author{Yuan Gao}
\address{Department of Mathematics, Hong Kong University of Science and Technology, Clear Water Bay, Hong Kong \&
Department of Mathematics, Duke University,
  Durham NC 27708, USA}
\email{yg86@duke.edu}

\author{Jian-Guo Liu}
\address{Department of Mathematics and Department of Physics, Duke University,
  Durham NC 27708, USA}
\email{jliu@math.duke.edu}

\author{Tao Luo}
\address{Department of Mathematics, Purdue University, West Lafayette IN 47907, USA}
\email{luo196@purdue.edu}

\author{Yang Xiang}
\address{Department of Mathematics, Hong Kong University of Science and Technology, Clear Water Bay, Hong Kong}
\email{maxiang@ust.hk}

\date{\today}

\begin{abstract}
In this paper, we perform mathematical validation of the Peierls--Nabarro  (PN) models, which  are multiscale models of dislocations that incorporate the detailed dislocation core structure. We focus on the static and dynamic PN models of an edge dislocation. In a PN model, the total energy includes the elastic energy in the two half-space continua and a nonlinear potential energy across the slip  plane, which is always infinite.
We rigorously establish the relationship between the PN model in the full space and the reduced problem on the slip plane in terms of both governing equations and energy variations. 
The shear displacement jump is determined only by the reduced problem on the slip plane while the displacement fields in the two half spaces are determined by linear elasticity.
We establish the existence and sharp regularities of classical solutions in Hilbert space. 
 For both the reduced problem and the full PN model, we  prove that a static solution is a global minimizer in perturbed sense. We also show that there is a unique classical, global in time solution of the dynamic PN model.
\end{abstract}

\maketitle

\section{Introduction}
Materials defects such  as dislocations are important structures in materials science. Dislocations are line defects in crystalline materials and the major carriers of plastic deformation \cite{HirthLothe}.
Many plastic and mechanical behaviors of materials are associated with the energetic and dynamic properties of dislocations. Understandings of these properties also form a basis for the development of many novel materials with  robust performance.

 As a line defect, a dislocation has a small region (called the dislocation core region) of heavily distorted atomistic structures with shear displacement jump along a slip plane; as illustrated Fig. \ref{fig:pn}.  The dislocation core structures play essential roles in determining the energetic and dynamic properties of dislocations, such as the dislocation line {energies} and the critical  stresses for the motion of dislocations.  
 The classical dislocation  theory \cite{HirthLothe} regards the dislocation core as a singular point so that the solution can be solved explicitly  based on  the linear elasticity theory. Although the classical dislocation theory works well outside the dislocation core  regions, it gives nonphysical singularities within the dislocation cores.   
One way to precisely describe the dislocation core structure on the continuum level is  the Peierls--Nabarro (PN) model \cite{PN1,PN2,p4}, which is a multiscale continuum model that incorporates the atomistic effect by introducing a nonlinear potential  describing the atomistic interaction across the slip plane of the dislocation. 
\begin{figure}[htbp]
\centering
    \includegraphics[width=.6\linewidth]{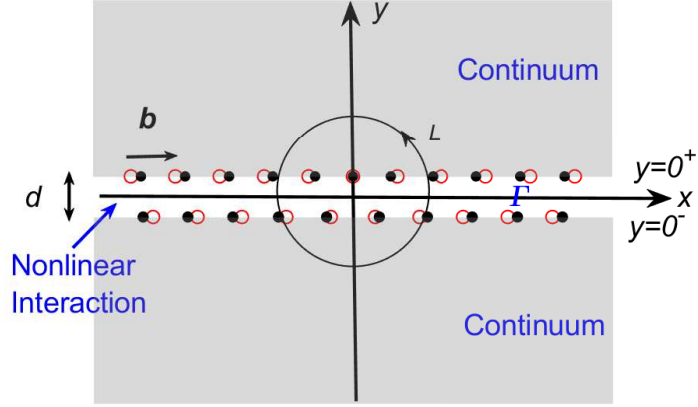}
    \caption{Schematic illustration of the PN model for an edge dislocation. The dislocation  locates along the $z$ axis with $+z$ direction, and its slip plane is the $y=0$ plane.  $\mathbf b$ is the Burgers vector and $d$ is the interplanar distance in the direction normal to the slip plane. The black dots and red circles show the locations of atoms of the two atomic planes $y=0^+$ and $y=0^-$ in the lattice with the dislocation and in the reference states before elastic deformation, respectively, based on a simple cubic lattice. The Burgers vector enclosed by a loop $L$ enclosing the dislocation is $\mathbf b_L=\oint_L \ud \mathbf u$. }
    \label{fig:pn}
\end{figure}

This paper focuses on the PN model for a straight edge dislocation \cite{HirthLothe} described below.
Assume that the dislocation  is located along the $z$ axis with $+z$ direction, and the slip plane of the dislocation
 is $\Gamma:=\{(x,y); y=0\}$.  Since the elastic field is uniform along the dislocation ($z$ direction), this problem is reduced to a two-dimensional problem in the $xy$ plane.
 In the PN model, the whole space is divided by the dislocation slip plane $\Gamma$ into two elastic continua $y>0$ and $y<0$ in which linear elasticity theory holds, and the two continua are connected by a nonlinear atomistic potential force across the slip plane $\Gamma$, see Fig.~\ref{fig:pn}.
The  displacement field $\mathbf u(x,y):=(u_1(x,y), u_2(x,y))$ has a shear displacement  jump across the slip plane $\Gamma$, i.e., $u_1$ is discontinuous across $\Gamma$.  

Dislocations are characterized by their Burgers vectors $\mathbf b$, which measure 
the direction and total magnitude of the shear displacement jump. The Burgers vector is defined as $\mathbf b=\oint_L \ud \mathbf u$, where $\mathbf u$ is the displacement vector and $L$ is any loop that encloses the dislocation line with counterclockwise orientation; see Fig.~\ref{fig:pn}. For the edge dislocation in Fig. \ref{fig:pn}, since the displacement $\mathbf u$ is differentiable in the half planes $y>0$ and $y<0$, the amplitude of $\mathbf b_L=(b_L,0)$ is $ b_L=\int_{\alpha}^\beta \left(-u_1'(x,0^+)+u_1'(x,0^-) \right)\ud x$, where $\alpha$ and $\beta$ are the intersection points of the loop $L$ with the $x$-axis. When the loop $L$ becomes infinitely large, $ b_L \to[-u_1'(+\8,0^+)+u_1'(+\8,0^-)]-[-u_1'(-\8,0^+)+u_1'(-\8,0^-)]=:b$.

Denote $\mathbf u^+$, $\mathbf u^-$ as the  {displacement fields} for the two half-spaces $\{(x,y); y>0\}$ and $\{(x,y); y<0\}$ respectively.
We impose  the following symmetric assumption
\begin{equation}\label{symsym}
u_1^+(x, 0^+)=-u_1^-(x, 0^-),\quad
u_2^+(x, 0^+)=u_2^-(x, 0^-).
\end{equation}
 and then the  far field boundary conditions at $y=0$ become
 \begin{equation}\label{BC}
 u_1^+(-\infty, 0^+)=\frac{b}{4}, \quad u_1^+(+\infty, 0^+)=-\frac{b}{4}.
 \end{equation}
For this edge dislocation, as  illustrated in Fig.~\ref{fig:pn} (based on simple cubic lattice for the locations of atoms near the slip plane $y=0$),  the reference states of the elastic deformation in the two half-space continua are different since there is an extra upper half plane of atoms located at $x=0$ in the upper space $y>0$. 
The shear displacement jump across the slip plane, or the disregistry, is
\begin{flalign}\label{disregistry}
\phi(x):=u_1^+(x, 0^+)-u_1^-(x, 0^-)+\frac{b}{2},
\end{flalign}
with the property
\begin{flalign}
\phi(-\infty)=b, \ \ \phi(+\infty)=0.
\end{flalign}
This means that away from the dislocation, we still have the perfect crystal lattice.
Note that the term $b/2$ in Eq.~\eqref{disregistry} is to account for the disregistry (relative shift) between the reference states in the upper and lower half spaces  in the direction of the Burgers vector.

 In the classical dislocation model \cite{Volterra}, the density of the magnitude of Burgers vector $\rho(x)=-\phi'(x)=b \delta(x)$, where $\delta(x)$ is the Dirac delta function, leads to singular displacement, strain and stress fields. Whereas in the PN model, the density of Burgers vector $\rho(x)=-\phi'(x)$ is a smoothed profile due to the incorporation of the nonlinear atomistic interaction across the slip plane. 
More precisely, the displacement fields are determined by minimizing the total energy $E(\mathbf u)$ including the elastic energy 
\begin{equation}\label{Eels}
E_{\mathrm{els}}(\mathbf u):=\frac12\int_{\mathbb{R}^2\backg} \sigma:\eps \ud x \ud y
\end{equation}
 in the two half spaces separated by the slip plane and nonlinear  misfit energy across the slip plane due to nonlinear atomistic interactions
\begin{equation}\label{Emisn}
 {E_\mathrm{mis}}(\mathbf u):=\int_{\Gamma} \gamma(\phi) \ud x.
\end{equation}
The misfit energy density $\gamma$ depends on the disregistry $\phi$ across the slip plane \eqref{disregistry} and is called the $\gamma$-surface~\cite{p4}. Using the boundary symmetry conditions in Eq.~\eqref{symsym}, we write the $\gamma$-surface as a function of $u_1^+$
 $$\gamma(\phi) =\gamma (u_1^+-u_1^-+b/2)  =\gamma (2u_1^++b/2) =:\well(u_1^+)$$
  for convenience of notation in the analysis. In a general one-dimensional model, $\gamma(\phi)$ is a bounded multi-well potential with period $b$ (period $b/2$ for $\well(v)$), and any minimum of it describes the perfect lattice.

The most important feature of the minimizing problem for the PN model above is that the shear displacement jump $u_1^+(x, 0^+)-u_1^-(x, 0^-)$  across the slip plane can be determined by a reduced one-dimensional model, i.e. a fractional Laplacian equation with a nonlinear potential force
\begin{equation}\label{Gamma_eq}
 -\frac{2G}{(1-\nu)\pi}{\mathrm{P.V.}} \int_{-\infty}^{+\infty} \frac{\pt_x u_1^+(s)}{x-s} \ud s=\well'(u_1^+), \quad x\in \mathbb{R},
\end{equation}
with boundary condition \eqref{BC},  where  $G$ is the shear modulus and $\nu$ is the Possion ratio.

 As a solvable example, the nonlinear potential takes the form of sinusoidal function \cite{PN1,PN2}, which phenomenologically reflects the lattice periodicity \cite{Frenkel},
\begin{equation}\label{spec-w}
\well(u_1)=\frac{Gb^2}{4\pi^2d}(1+\cos \frac{4\pi u_1}{b}),
\end{equation} 
 where $d$ is a constant indicating the interplanar distance in the direction normal to the slip plane; see Fig \ref{fig:pn}.
 A nontrivial solution is
 $u_1^+(x)=-\frac{b}{2\pi}\tan^{-1}\frac{x}{\zeta}$, where $\zeta=\frac{d}{2(1-\nu)}$ and $2\zeta$ is the core width of the dislocation, with the far field decay rate 
\begin{equation}\label{decay-r}
 u_1^+(x)\pm\frac{b}{4}\sim \frac{b\zeta}{2\pi x}\quad  \text{ as }x\to \pm\8.
\end{equation} 
Then by solving the linear elastic equation in the two half spaces, we obtain the special solution to the full system \cite{HirthLothe,xiang_cicp}
   \begin{equation}\label{vec-s}
   \begin{array}{l}
   u_1(x,y)=\frac{b}{2\pi}\left[-\tan^{-1}\frac{x}{y\pm\zeta}+\frac{xy}{2(1-\nu)(x^2+(y\pm\zeta)^2)}\right],\\
   u_2(x,y)=-\frac{b}{2\pi}\left[\frac{1-2\nu}{4(1-\nu)}\log(x^2+(y\pm\zeta)^2)+
   \frac{x^2-y^2+\zeta^2}{4(1-\nu)(x^2+(y\pm\zeta)^2)}\right],
   \end{array}
   \end{equation}
We call this solution the elastic extension of $u_1(x)$; see Theorem \ref{def_els}.

 Instead of the elastic extension, a  scalar model using harmonic extension  to obtain scale  solution in the two half spaces
 $$\tilde{u}(x,y)=-\frac{b}{2\pi}\tan^{-1}\frac{x}{y\pm\zeta}$$ 
 plays important role in studying dislocations.
 For the mathematical analysis for the static solution to the reduced PN model \eqref{Gamma_eq} and the  scalar model are well studied in \cite{Caff, XC2005, XC2015, Pala2013}.  In \cite{XC2005}, for a general misfit potential $\gamma$ with $C^{2,\alpha}$ regularity, \textsc{Cabr\'e and Sol\`a-Morales}  (i) established the existence  (unique up to translation) of  monotonic solutions with $C^{2, \alpha}$ regularity; (ii) recovered the sharp decay rate \eqref{decay-r} for the bistable profile; (iii) proved the bistable profile is a global minimizer relative to perturbations in $[-\frac{b}{4}, \frac{b}{4}]$ for the total energy $E(\tilde{u})$ for the scalar model. 
 In \cite{Pala2013}, \textsc{Dipierro, Palatucci and Valdinoci}  directly worked on the nonlocal equation \eqref{Gamma_eq} and improved  the global minimizer  result (iii) by removing the above $[-\frac{b}{4}, \frac{b}{4}]$-restriction on perturbations.
  Similar results for  the existence, regularities, and uniqueness of nonlocal equation with general fractional Laplacian $(-\Delta )^{\frac{s}{2}}$  for exponent $s\in(0,1)$    are obtained by \textsc{Cabr\'e and Sire}~\cite{XC2015}.

For the dynamic PN model,  viscosity solutions of the analogy scalar model which is
a heat equation with a  dynamic boundary condition are studied by \textsc{Fino, Ibrahim and Monneau} \cite{Fino}. The authors established existence and uniqueness of the viscosity solution to the scaler model using comparison principle for second order equations and the harmonic extension, which works only for scalar solutions.

With the same assumption on the above general misfit potential $\gamma$ with $C^{2,\alpha}$ regularity, we summarize the main results in this paper as follows.
\begin{enumerate}[(i)]
\item For the reduced nonlocal equation \eqref{Gamma_eq}, we obtain a sharp regularity result $u_1\notin \dot{H}^{\frac12}(\mathbb{R})$, $u_1\in \dot{H}^{s}(\mathbb{R})$ for any $s>\frac12$ (see Proposition \ref{regu}).
\item We extend $u_1$ to the two half spaces as $\mathbf u$ using elastic extension (see Theorem \ref{def_els}) and obtain the corresponding sharp regularity $\mathbf u\notin \dot{\Lambda}^{1}_\Gamma(\mathbb{R}^2)$, $\mathbf u \in \ff_\Gamma(\mathbb{R}^2)$ for any $s>\frac12$ (See \eqref{space} for homogeneous Hilbert space $\ffn(\mathbb{R}^2)$). 
\item
Combining above two facts (i) and (ii), we obtain the well-possedness of the static dislocation model in the full space (see Theorem \ref{thm_steady}).
\item We establish the energy connections between the reduce model on $\Gamma$ and the full system in $\mathbb{R}^2$ in the perturbed sense (see Theorem \ref{energy-re}), and then use it to prove the static solution (unique upto translations) to the full system is the global minimizer of the  total energy in the perturbed sense (see Theorem \ref{thm_mini}).
\item For the dynamic PN model in the full system, we obtain the global classical solution under quasi-static assumptions in the two half spaces.
\end{enumerate}

To the best of our knowledge, almost no study in the literature that explores the true three-dimensional vector field solutions to the static and dynamic PN models.  In fact, the vector-field displacement is essential to determine  long-range elastic interactions associated with dislocations and dislocation core structures. In contrast to the harmonic extension, we do not have maximal principle for the elastic extension. Indeed, the displacement fields $\mathbf u$ in \eqref{vec-s} has a $\ln r$ growth rate at far field, which is same as that of the two-dimensional stream function in  fluids or the two-dimensional electrostatic potential.

This paper only focus on the analysis for a  single edge dislocation model. There are vast literature in mathematical and physics studying dislocations and related problems and we only list a few here.
For example, 
some different physical models have been generalized and applied to calculate dislocation line energy, critical stress for the motion of dislocations, energy of grain boundaries which  consist of arrays of dislocations, and structure and morphology of bilayer materials with dislocations, e.g. \cite{p4, Kaxiras, p5, p6, p7, xiang_cicp, p38, p40, p8, p9, p42}. Convergence from atomistic model to the PN model with the $\gamma$-surface in bilayer materials has been proved \cite{Luo2018}.
There are also some results for other dislocation dynamics models, e.g., \cite{disdy1} proved    short time existence of a level set dislocation dynamics model \cite{Xiang2003}, and
convergence from PN models to larger scale models for a dislocation particle system, slow motion and other properties were analyzed \cite{Leoni1,Leoni2,DFV,Fino,Mon1,Monneau1,Leoni3,DPV,PV0,PV1,Fonseca}.  Some other techniques used for nonlocal equations rising from epitaxial surfaces were presented in \cite{Lu1, Lu2}.

The remaining sections of this paper is organized as follows. In Sec. \ref{sec2}, we first derive the reduced system and prove its sharp regularities; see Sec. \ref{cal_sigma12} and Sec. \ref{u1_bc} separately. Then we  establish the connection between the reduced system and the full system by the elastic extension in Theorem \ref{def_els} and use it to obtain the well-posedness of the full system in Theorem \ref{thm_steady}. In Sec. \ref{sec2.2}, we first obtain the connections between the reduced energy and the total energy in Theorem \ref{energy-re}. Then we prove the static solution (unique upto translations) to the full system is the global minimizer in perturbed sense; see Theorem \ref{thm_mini}. Sec. \ref{sec3} is devoted to prove existence of the global classical solution to the dynamic PN model.

\section{Well-posedness for static PN model}\label{sec2}
We investigate  solutions to the static PN model by first deriving the Euler--Lagrange equation which corresponds to  critical points of the total energy of the PN model. To understand connections between  solutions to the full space and solutions to the reduced system on slip plane, we establish the elastic extension analogue to the harmonic extension for the scalar model. More precisely, we will obtain the classical solution to the reduced system with sharp regularities and the  classical solution to the full system with corresponding regularities.

For simplification of notations, we will use $u_1^\pm(x)=u_1^\pm(x,0^\pm)$, $u_2^\pm(x)=u_2^\pm(x,0^\pm)$, $\sigma^\pm(x)=\sigma^\pm(x,0^\pm)$, etc.

\subsection{Total energy and Euler--Lagrange equation}
In the PN model, the two half spaces separated by the slip plane of the dislocation are assumed to be linear elastic continua, and the two half spaces are connected by a nonlinear potential
energy across the slip plane that incorporates  atomistic interactions, see Fig.~\ref{fig:pn}.
 The total energy  is
\begin{equation}\label{E2.2}
E(\mathbf u):= E_\mathrm{els}(\mathbf u)+E_\mathrm{mis}(\mathbf u).
\end{equation}
Here $\mathbf u$ is the displacement vector. As described in the introduction, in this PN model for the edge dislocation along the $z$ axis, the crystal structure is uniform in the $z$ direction; as a result, the problem becomes a problem in the $xy$ plane and the displacement in $z$ direction with $u_3=0$. The energy $E(\mathbf u)$ is the energy per unit length along the dislocation, and the displacement vector can be written as $\mathbf u=(u_1,u_2)$.

The first term in the total energy in Eq.~\eqref{E2.2} is the elastic energy in the two half spaces defined in \eqref{Eels}.
Here $\varepsilon$ is the strain tensor:
\begin{equation}\label{strain}
 \eps_\sij=\frac{1}{2}(\pt_j u_i+\pt_i u_j),
  \end{equation}
 for $i,j=1,2, 3$,  (where $\partial_1=\partial_x:=\frac{\partial}{\partial x}$, $\partial_2=\partial_y:=\frac{\partial}{\partial y}$, and $\partial_3=\partial_z:=\frac{\partial}{\partial z}$,) $\sigma$ is the stress tensor:
\begin{equation}\label{constitutive}
\sigma_\sij=2G \eps_\sij+\frac{2\nu G}{1-2\nu} \eps_{kk}\delta_{ij},
\end{equation}
for $i,j=1,2,3$ (in an isotropic medium),   $\delta_{ij}=1$ when $i=j$ and $0$ otherwise, and $\sigma : \varepsilon=\sigma_{ij}\varepsilon_{ij}$. We have used the Einstein summation convention that $\eps_{kk}=\sum_{k=1}^3 \eps_{kk}=\sum_{k=1}^2 \eps_{kk}$ and $\sigma_{ij}\varepsilon_{ij}=\sum_{i,j=1}^3 \sigma_{ij}\varepsilon_{ij}= \sum_{i,j=1}^2 \sigma_{ij}\varepsilon_{ij}$.

The second term in the total energy in Eq.~\eqref{E2.2} is the misfit energy across the slip plane due to nonlinear atomistic interactions defined in \eqref{Emisn}, i.e.
\begin{equation}\label{Emis}
 {E_\mathrm{mis}}(\mathbf u):=\int_{\Gamma} \gamma(\phi) \ud x =\int_{\Gamma} \well(u_1^+) \ud x.
 \end{equation}
 For the analysis of the PN model for an edge dislocation in this paper, we assume that the nonlinear potential $W$ satisfies for some $\alpha\in(0,1)$
\begin{equation}\label{potential}
\begin{aligned}
 &\well\in C^{2, \alpha}(\mathbb{R}), \vspace{1ex} \\ &W(v)>W\left(-\frac{b}{4}\right)=W\left(\frac{b}{4}\right), \ {\rm for}\ v \in \left(-\frac{b}{4}, \frac{b}{4} \right), \vspace{1ex} \\
 &  W''\left(\pm\frac{b}{4}\right)>0;
 \end{aligned}
 \end{equation}
see \eqref{spec-w} for example.
\begin{rem}
We remark that if we assume further $W$ is an even function, then the solution $u_1^+$ to \eqref{Gamma_eq} will be a unique odd function with the center $u_1^+(0,0^+)=0$, which is the case that an extra upper half plane of atoms locates at $x=0$ as illustrated in Fig. \ref{fig:pn}. Without this additional assumption, the solution $u_1$ and the corresponding $\mathbf u$ are unique upto translations; see  Theorem \ref{thm_steady}.
\end{rem}

The equilibrium structure of the edge dislocation is obtained by minimizing the total energy in Eq.~\eqref{E2.2} subject to the boundary condition at the slip plane given in Eq.~\eqref{BC}.
However, it is known that for a straight dislocation, the strain $\varepsilon$ and the stress $\sigma$ decay with rate $1/r$ at far field where $r$ is the distance to the dislocation, thus the elastic energy $E_\mathrm{els}$ is infinity \cite{HirthLothe}; see Remark \ref{rem-c} below. To be precise, we define the perturbed elastic energy of $\mathbf u$ with respect to any perturbation fields $ \bm\varphi  \in C^\infty(\mathbb{R}^2\backslash \Gamma; \mathbb{R}^2)$ and $ \bm\varphi $ has compact support in some $B(R)$ as
\begin{equation}\label{per1tm}
\begin{aligned}
\hel:=& \int_{\mathbb{R}^2\backslash \Gamma} \frac{1}{2} (\eps_u + \eps_\varphi): (\sigma_u + \sigvv)  -\frac{1}{2} \varepsilon:\sigma~\ud x\\
    =& \int_{\mathbb{R}^2\backslash \Gamma} \frac{1}{2}  [(\epsv)_{ij}(\sigvv)_{ij}+(\epsv)_{ij}(\sigs)_{ij}+ (\epss)_{ij}(\sigvv)_{ij} ]   ~\ud x\\
    =& \el(\bm\varphi ) + \elc(\mathbf u, \bm\varphi )
\end{aligned}
\end{equation}
where the cross term
\begin{equation}\label{crossels}
  \elc(\mathbf u, \bm\varphi ):= \int_{\mathbb{R}^2\backslash \Gamma} \frac{1}{2}  (\epsv:\sigma_u+ \eps_u:\sigvv )   ~\ud x= \int_{\mathbb{R}^2\backslash \Gamma} \frac{1}{2}  [(\epsv)_{ij}(\sigma_u)_{ij}+ (\eps_u)_{ij}(\sigvv)_{ij} ]   ~\ud x,
\end{equation}  
   where $\eps_u, \sigma_u$ and $\epsv, \sigvv$ are the stain and stress tensors corresponding to $\mathbf u$ and $\bm\varphi $ respectively. 
Then the perturbed total energy is defined as
\begin{equation}\label{perEtotal}
\het:= \hel+\int_\Gamma \well(u_1+\varphi_1)-\well(u_1) \ud x.
\end{equation}
\begin{defn}\label{minimizer}
We call a function $\mathbf u$ a global minimizer of total energy $E$ if it satisfies 
\begin{equation}
\het\geq 0
\end{equation}
for any perturbation $ \bm\varphi  \in C^\infty(\mathbb{R}^2\backslash \Gamma; \mathbb{R}^2)$ supported in some $B(R)$ satisfying
\begin{equation}\label{bcphi}
\varphi_1^+(x, 0^+)=-\varphi_1^-(x, 0^-), \, \varphi_2^+(x, 0^+)=\varphi_2^-(x, 0^-).
\end{equation} 
\end{defn}

 We have the following lemma for the Euler--Lagrange equation with respect to the total energy  $E(\mathbf u)$.

\begin{lem}\label{Lem2.2}
Assume that
 $\mathbf u\in C^2(\mathbb{R}^2\backslash \Gamma) $ satisfying boundary conditions \eqref{symsym} and \eqref{BC}  is a  minimizer of the total energy $E$ in  the sense of Definition \ref{minimizer}. Then $\mathbf u$ satisfies the  Euler--Lagrange equation
\begin{equation}\label{maineq}
\begin{aligned}
&\Delta \mathbf u+\frac{1}{1-2\nu }\nabla( \nabla\cdot \mathbf u)=0 \quad {\rm in} \ \mathbb{R}^2\backslash\Gamma, \vspace{1ex}\\
&\sigma_{12}^+ +\sigma_{12}^-=\well'(u_1^+) \quad {\rm on} \ \Gamma,\\
&\sigma_{22}^+ =\sigma_{22}^- \quad {\rm on}\ \Gamma.
\end{aligned}
\end{equation}
\end{lem}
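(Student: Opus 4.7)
The plan is to compute the first variation of the perturbed total energy $\het$ in the direction of an admissible perturbation. Fix $\bm\varphi\in C^\infty(\mathbb{R}^2\backg;\mathbb{R}^2)$ compactly supported and satisfying \eqref{bcphi}, and note that $t\bm\varphi$ is then admissible for every $t\in\mathbb{R}$. By \eqref{per1tm} and the bilinearity of $\elc$ in its second argument, one has $\hat{E}_{\mathrm{els}}(t\bm\varphi;\mathbf u)=t^2\el(\bm\varphi)+t\,\elc(\mathbf u,\bm\varphi)$; a Taylor expansion of $\well\in C^{2,\alpha}$ in \eqref{perEtotal} gives $\int_\Gamma[\well(u_1^++t\varphi_1^+)-\well(u_1^+)]\ud x = t\int_\Gamma\well'(u_1^+)\varphi_1^+\ud x+O(t^2)$. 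Since by hypothesis $\hat{E}_{\mathrm{total}}(t\bm\varphi;\mathbf u)\geq 0$ for all $t\in\mathbb{R}$ and vanishes at $t=0$, the coefficient of $t$ must vanish:
\begin{equation*}
\elc(\mathbf u,\bm\varphi)+\int_\Gamma\well'(u_1^+)\varphi_1^+\ud x=0.
\end{equation*}

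The second step is to integrate the cross term by parts. Since the isotropic constitutive law \eqref{constitutive} yields a symmetric bilinear form, $\epsv:\sigma_u=\eps_u:\sigvv$, so $\elc(\mathbf u,\bm\varphi)=\int_{\mathbb{R}^2\backg}(\sigma_u)_{ij}\pt_j\varphi_i\ud x\ud y$. Applying the divergence theorem separately in the open half-planes $\{y>0\}$ and $\{y<0\}$, with outward unit normals $(0,-1)$ and $(0,1)$ on $\Gamma$, produces the interior term $-\int_{\mathbb{R}^2\backg}[\pt_j(\sigma_u)_{ij}]\varphi_i\ud x\ud y$ together with the boundary contribution $-\int_\Gamma(\sigma_{i2}^+\varphi_i^+-\sigma_{i2}^-\varphi_i^-)\ud x$. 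Substituting the admissibility relations $\varphi_1^+=-\varphi_1^-$ and $\varphi_2^+=\varphi_2^-$ collapses the boundary term into $-\int_\Gamma(\sigma_{12}^++\sigma_{12}^-)\varphi_1^+\ud x-\int_\Gamma(\sigma_{22}^+-\sigma_{22}^-)\varphi_2^+\ud x$.

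The three equations in \eqref{maineq} then follow from a standard localization argument. First, choosing $\bm\varphi$ compactly supported strictly away from $\Gamma$ (so its traces vanish and \eqref{bcphi} holds trivially) eliminates all boundary and misfit contributions, and the fundamental lemma of the calculus of variations forces $\pt_j(\sigma_u)_{ij}=0$ in each open half-plane; expanding this through \eqref{constitutive} and \eqref{strain} converts it into the Navier--Lam\'e equation $\Delta\mathbf u+\frac{1}{1-2\nu}\nabla(\nabla\cdot\mathbf u)=0$. Next, I would construct two families of admissible $\bm\varphi$ with nonzero boundary traces: one with $\varphi_2\equiv 0$ and arbitrary smooth compactly supported $\varphi_1^+$ on $\Gamma$ (extended by $\varphi_1^-=-\varphi_1^+$ and a smooth cutoff into each half-plane), the symmetric one with $\varphi_1\equiv 0$ and arbitrary $\varphi_2^+$. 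Inserting these and invoking the fundamental lemma on $\Gamma$ isolates the two jump conditions $\sigma_{12}^++\sigma_{12}^-=\well'(u_1^+)$ and $\sigma_{22}^+=\sigma_{22}^-$.

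The main point requiring care is that the coupling \eqref{bcphi} between the upper and lower traces forbids one from varying $\bm\varphi$ freely on a single side of $\Gamma$, so the explicit symmetric/antisymmetric extensions above are what supply a test class rich enough to identify each of the boundary equations; the integration by parts itself is elementary, since $\mathbf u\in C^2(\mathbb{R}^2\backg)$ together with the compact support of $\bm\varphi$ reduces every integral to a bounded domain and no far-field decay of $\mathbf u$ is needed.
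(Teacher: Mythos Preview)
Your proposal is correct and follows essentially the same approach as the paper's proof: compute the first variation of the perturbed total energy, integrate the elastic cross term by parts in each half-plane, use the symmetry constraints \eqref{bcphi} to combine the traces, and then localize to extract the interior equation and the two jump conditions. The only cosmetic difference is that the paper carries the computation as a single limit $\lim_{\delta\to 0}\frac{1}{\delta}(E(\mathbf u+\delta\mathbf v)-E(\mathbf u))$ and then tests with both $\pm\mathbf v$, whereas you invoke the quadratic-in-$t$ structure of $\hat{E}_{\mathrm{total}}(t\bm\varphi;\mathbf u)$ directly; your explicit construction of symmetric/antisymmetric test functions to decouple the two boundary conditions is slightly more detailed than the paper's ``arbitrariness'' argument but amounts to the same thing.
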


\begin{proof}
From the Definition \ref{minimizer} of minimizer, we calculate the variation of energy in terms of a perturbation with compact support in an arbitrary ball $B(R)$.
For any $ \mathbf v\in C^\infty(B(R)\backslash \Gamma)$ such that $ \mathbf v$ has compact support in $B(R)$ and satisfies \eqref{bcphi}, we consider the perturbation $\delta \mathbf v$ where $\delta$ is a small real number. We denote  $\varepsilon:=\varepsilon(\mathbf u)$, $\sigma:=\sigma(\mathbf u)$ and $\varepsilon_1:=\varepsilon(\mathbf v)$, $\sigma_1:=\sigma(\mathbf v)$.
Then we have that
\begin{equation}\label{tem2.8}
\begin{aligned}
~&\lim_{\delta\to 0}\frac{1}{\delta} (E(\mathbf u+\delta \mathbf v)-E(\mathbf u))\\
=& \int_{B(R)\backslash \Gamma}\frac{1}{2}(\sigma_1:\varepsilon+ \sigma:\varepsilon_1)\ud x \ud y  +\int_{[-R, R]} \well'(u_1^+)v_1^+ \ud x\\
=&\int_{B(R)\backslash \Gamma}\sigma:\varepsilon_1\ud x \ud y  +\int_{[-R, R]} \well'(u_1^+)v_1^+ \ud x\\
=&\int_{B(R)\backslash \Gamma}\sigma:\nabla\mathbf v\ud x \ud y  +\int_{[-R, R]}\well'(u_1^+)v_1^+ \ud x\\
=&-\int_{B(R)\backslash \Gamma}\partial_j\sigma_{ij} v_i\ud x \ud y  +\int_{{[-R, R]}\cap \{y=0^+\}}\sigma_{ij}^+ n_j^+ v_i^+ \ud x \\
&\quad + \int_{{[-R, R]}\cap\{y=0^-\}}\sigma_{ij}^- n_j^- v_i^- \ud x
 +\int_{[-R, R]} \well'(u_1^+)v_1^+ \ud x\geq 0\\
\end{aligned}
\end{equation}
where we used the property that $\sigma$  and $\nabla \cdot \sigma$ are locally integrable in $\{y>0\}\cup\{y<0\}$ when carrying out the integration by parts, and
 the outer normal vector of the boundary $\Gamma$ is
 $\mathbf n^+$ (resp. the $\mathbf n^-$) for the upper  {half-plane} (resp. lower half-plane). Similarly, taking perturbation as $-\mathbf v$, we have
 \begin{equation}
\begin{aligned}
~&\lim_{\delta\to 0}\frac{1}{\delta} (E(\mathbf u-\delta \mathbf v)-E(\mathbf u))\\
=&\int_{B(R)\backslash \Gamma}\partial_j\sigma_{ij} v_i\ud x \ud y  -\int_{{[-R, R]}\cap \{y=0^+\}}\sigma_{ij}^+ n_j^+ v_i^+ \ud x \\
&\quad - \int_{{[-R, R]}\cap\{y=0^-\}}\sigma_{ij}^- n_j^- v_i^- \ud x
 -\int_{[-R, R]} \well'(u_1^+)v_1^+ \ud x\geq 0.
\end{aligned}
\end{equation}
 Hence
 \begin{align*}
& -\int_{B(R)\backslash \Gamma}\partial_j\sigma_{ij} v_i\ud x \ud y  +\int_{{[-R, R]}\cap \{y=0^+\}}\sigma_{ij}^+ n_j^+ v_i^+ \ud x \\
&\quad + \int_{{[-R, R]}\cap\{y=0^-\}}\sigma_{ij}^- n_j^- v_i^- \ud x
 +\int_{[-R, R]} \well'(u_1^+)v_1^+ \ud x= 0
 \end{align*}
  Noticing that $\mathbf n^+=(0,-1)$ and $\mathbf n^-=(0,1)$, we have
 \begin{equation}
 \begin{aligned}
&\int_{\{y=0^+\}}\sigma_{ij}^+ n_j^+ v_i^+ \ud x + \int_{\{y=0^-\}}\sigma_{ij}^- n_j^- v_i^- \ud x \\
=& \int_{\{y=0^+\}}-\sigma_{22}^+  v_2^+ \ud x+ \int_{\{y=0^-\}}\sigma_{22}^-  v_2^- \ud x+ \int_{\{y=0^+\}}-\sigma_{12}^+  v_1^+ \ud x+ \int_{\{y=0^-\}}\sigma_{12}^-  v_1^- \ud x.
 \end{aligned}
 \end{equation}
Since  $v_1^+(x)=-v_1^-(x)$ and $v_2^+(x)=v_2^-(x)$. Hence due to the arbitrariness of $R$, we conclude that the minimizer  $\mathbf u$ must  satisfy
\begin{equation}\label{tem2.10}
\begin{aligned}
&\int_{\Gamma}\left[\sigma_{12}^+ + \sigma_{12}^- -\well'(u_1^+)\right] v_1^+ \ud x=0,\\
&\int_\Gamma \left(\sigma_{22}^+-\sigma_{22}^-\right) v_2^+ \ud x =0,\\
&\int_{\mathbb{R}^2\backslash \Gamma} (\nabla\cdot \sigma) \cdot\mathbf v~ \ud x\ud y =0
\end{aligned}
\end{equation}
for any $\mathbf v\in  C^\infty(B(R)\backslash\Gamma)$ and $ \mathbf v$ has compact support in $B(R)$,
 which leads to the Euler--Lagrange equation \eqref{maineq}. Here we have written the equation $\nabla\cdot \sigma=0$ in $\mathbb{R}^2 \backslash \Gamma$ as the first equation of \eqref{maineq} in terms of the displacement $\mathbf u$, using the constitutive relation in \eqref{constitutive} and the definition of the strain tensor in  \eqref{strain}.
\end{proof}



\subsubsection{Working Space}
To better understand the sharp working space for the PN dislocation model, let us first see an example for classical nonlinear potential below. 

\begin{rem}\label{rem-c}
Recall the special solution $u_1^+(x)=-\frac{b}{2\pi}\tan^{-1}\frac{x}{\zeta}$ for the reduced model \eqref{Gamma_eq} when the nonlinear potential is \eqref{spec-w}.  Using this solution of the reduced problem on $\Gamma$, the solution of the full PN model, i.e., the Euler--Lagrange equation \eqref{maineq} with the boundary conditions \eqref{symsym} and \eqref{BC}, is shown in \eqref{vec-s}. The stress tensor is then
  \begin{equation}\label{sp-sigma}
    \sigma=\frac{Gb}{2\pi(1-\nu)}\left(
    \begin{array}{ccc}
   -\frac{3y\pm2\zeta}{x^2+(y\pm\zeta)^2}+\frac{2y(y\pm\zeta)^2}{[x^2+(y\pm\zeta)^2]^2} &
   \frac{x}{x^2+(y\pm\zeta)^2}-\frac{2xy(y\pm\zeta)}{[x^2+(y\pm\zeta)^2]^2}&0 \\
    \frac{x}{x^2+(y\pm\zeta)^2}-\frac{2xy(y\pm\zeta)}{[x^2+(y\pm\zeta)^2]^2} &
    -\frac{y}{x^2+(y\pm\zeta)^2}+\frac{2x^2y}{[x^2+(y\pm\zeta)^2]^2}&0 \\
  0&0&-\frac{2\nu(y\pm\zeta)}{x^2+(y\pm\zeta)^2}
      \end{array}
      \right),
  \end{equation}
  where  $+\zeta$ applies for $y>0$ while $-\zeta$ applies for $y<0$. Note that in this case, the disregistry across $\Gamma$ defined in \eqref{disregistry} is $\phi(x)=2u_1^+(x)+\frac{b}{2}=-\frac{b}{\pi}\tan^{-1}\frac{x}{\zeta}+\frac{b}{2}$ and the density of the Burgers vector is $\rho(x)=-\phi'(x)=\frac{b}{\pi}\frac{\zeta}{x^2+\zeta^2}$.
\end{rem}

From this example, 
 the Fourier transform of $u_1^+(x)=-\frac{b}{2\pi}\tan^{-1}\frac{x}{\zeta}$ in tempered distributional sense is $-\frac{ib}{2|\xi|}e^{-|\zeta \xi|}$. Thus we can show
 \begin{equation}\label{spec_s}
 \|u_1^+\|^2_{\dot{H}^{s}(\mathbb{R})}= \frac{b^2\Gamma(2s-1)}{4\pi(2\zeta)^{2s-1}} \,\text{ for } s>\frac12;\quad  \|u_1^+\|_{\dot{H}^{\frac12}(\mathbb{R})}=+\8,
 \end{equation}
 where $\Gamma(2s-1)$ is the Gamma function. So we want to study $u_1^+(x,0^+)\in H^{\frac12+\eps}$ for $\eps>0$.
 
For real number $s>0$ and intege $m\geq 0$, define the homogeneous Sobolev space
\begin{equation}
\dot{H}^{s,m}(\mathbb{R}^2\backslash\Gamma):= \{ u;\,\, (-\pt_{xx})^{\frac s2}\pt_y^m  u \in L^2(\mathbb{R}^2\backslash\Gamma)\}
\end{equation} 
with standard semi-norm $\|\cdot\|_{\dot{H}^{s,m}}(\mathbb{R}^2\backslash\Gamma).$
  Therefore it is natural to define spaces for $s\geq 1$
 \begin{equation}\label{space}
 \ffn_\Gamma(\mathbb{R}^2):=\{\mathbf u\in \dot{H}^{s-m,m} (\mathbb{R}^2\backslash\Gamma), \,0\leq m\leq [s],\, u_1^+(x, 0^+)=-u_1^-(x, 0^-), \, u_2^+(x, 0^+)=u_2^-(x, 0^-)\},
 \end{equation}
 where $[s]$ represents the integer part of $s.$ 
 Define the semi-norm for $\mathbf u\in\ffn_\Gamma(\mathbb{R}^2)$ as
 \begin{equation}
 \|\mathbf u\|^2_{\ffn_\Gamma(\mathbb{R}^2)}:= \sum_{m=0}^{[s]} \|\mathbf u\|_{\dot{H}^{s,m}(\mathbb{R}^2\backslash\Gamma)}^2.
 \end{equation}

 It is easy to check the example above belongs $\ff_\Gamma(\mathbb{R}^2)$ for $s>\frac12$ but $\sigma\sim \frac{1}{r}$ at far field implies $ \mathbf u\notin \dot{\Lambda}_\Gamma^1(\mathbb{R}^2)$. Due to the elastic continua is divided into two half spaces,  taking $m$ as an integer is to avoid technique complication for fractional derivatives in $y$ direction.  In this paper, we will see the working space for PN dislocation model is $\ff_\Gamma(\mathbb{R}^2)$  for  real number $s>\frac12$.
 
 To ensure we can take trace for any function $\mathbf u\in \ffn_\Gamma(\mathbb{R}^2)$, let us first prove the trace theorem for $\ffn_\Gamma(\mathbb{R}^2)$. The inverse trace theorem is proved in  Theorem \ref{def_els} by establishing the elastic extension.
 \begin{lem}[Trace Theorem]
 Given $\mathbf u\in \ffn_\Gamma(\mathbb{R}^2) $ for any $s\geq 1$, then the trace of $\mathbf u$, $u_i^\pm|_\Gamma\in \dot{H}^{s-\frac12}(\mathbb{R})$, $i=1,2$ and we have the estimate
 \begin{equation}\label{trace}
 \|u_i^\pm|_\Gamma\|_{\dot{H}^{s-\frac12}(\mathbb{R})} \leq \|\mathbf u\|_{\ffn_\Gamma(\mathbb{R}^2)}, \quad i=1,2.
 \end{equation}
 \end{lem}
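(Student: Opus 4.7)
By the symmetry condition built into the definition of $\ffn_\Gamma(\mathbb{R}^2)$, it is enough to control $u_i^+|_\Gamma$; the trace on $y=0^-$ follows identically after reflection. Fix a component $u=u_i$ (the argument is component-blind) and restrict attention to the upper half-plane $\{y>0\}$. The idea is to reduce to a one-dimensional trace inequality fiber-by-fiber in the dual variable $\xi$ of $x$, and then recombine using Cauchy--Schwarz.

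First I would take the partial Fourier transform in $x$, writing $\hat u(\xi,y)$ for the transform of $u(x,y)$. Parseval gives, for any integer $m\ge 0$,
\begin{equation*}
\|u\|_{\dot H^{s-m,m}(\{y>0\})}^2 \;=\; \int_{\mathbb{R}}\int_0^{\infty} |\xi|^{2(s-m)}\,|\partial_y^m \hat u(\xi,y)|^2\,\mathrm{d}y\,\mathrm{d}\xi.
\end{equation*}
Since $s\ge 1$, the sum defining $\|\mathbf u\|^2_{\ffn_\Gamma(\mathbb{R}^2)}$ includes both the $m=0$ and $m=1$ terms, which are the only ones I will actually need. By Fubini, for a.e.\ $\xi\in\mathbb{R}$ the function $y\mapsto \hat u(\xi,y)$ lies in $H^1(0,\infty)$, hence admits a trace $\hat u(\xi,0^+)$.

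The second step is the one-dimensional trace inequality for $H^1(0,\infty)$. Writing $|\hat u(\xi,0^+)|^2 = -\int_0^\infty \partial_y|\hat u(\xi,y)|^2\,\mathrm{d}y$ and applying Cauchy--Schwarz in $y$ yields
\begin{equation*}
|\hat u(\xi,0^+)|^2 \;\le\; 2\,\Bigl(\int_0^\infty |\hat u(\xi,y)|^2\,\mathrm{d}y\Bigr)^{1/2}\Bigl(\int_0^\infty |\partial_y\hat u(\xi,y)|^2\,\mathrm{d}y\Bigr)^{1/2}.
\end{equation*}
Multiplying by the weight $|\xi|^{2s-1}=|\xi|^{s}\cdot|\xi|^{s-1}$ and distributing the factors inside the two square roots produces
\begin{equation*}
|\xi|^{2s-1}|\hat u(\xi,0^+)|^2 \;\le\; 2\,\Bigl(\int_0^\infty |\xi|^{2s}|\hat u|^2\,\mathrm{d}y\Bigr)^{1/2}\Bigl(\int_0^\infty |\xi|^{2(s-1)}|\partial_y\hat u|^2\,\mathrm{d}y\Bigr)^{1/2}.
\end{equation*}
Now integrate in $\xi$, apply Cauchy--Schwarz once more, and use Parseval to recognize the two factors as $\|u\|_{\dot H^{s,0}}$ and $\|u\|_{\dot H^{s-1,1}}$; an AM--GM step then gives
\begin{equation*}
\|u|_{y=0^+}\|_{\dot H^{s-1/2}(\mathbb{R})}^2 \;\le\; \|u\|_{\dot H^{s,0}}^2 + \|u\|_{\dot H^{s-1,1}}^2 \;\le\; \|\mathbf u\|_{\ffn_\Gamma(\mathbb{R}^2)}^2,
\end{equation*}
which is the desired bound \eqref{trace}.

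The argument is clean; the only genuinely delicate point is justifying the one-dimensional trace identity $|\hat u(\xi,0^+)|^2=-\int_0^\infty\partial_y|\hat u(\xi,y)|^2\,\mathrm{d}y$ for general $\mathbf u\in\ffn_\Gamma(\mathbb{R}^2)$ rather than for smooth compactly supported test functions. I would handle this by mollifying in $x$ and $y$ inside $\{y>0\}$, establishing the estimate for the smooth approximants where the identity is immediate, and then passing to the limit using the fact that the $\dot H^{s,0}$ and $\dot H^{s-1,1}$ seminorms are finite (so the $m=0,1$ Parseval integrands form an equi-integrable family, and the trace map is continuous on $H^1(0,\infty)$ in $\xi$-slices for a.e.\ $\xi$). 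No other $m$-terms in the seminorm are needed, so the argument is indifferent to whether $s$ is an integer.
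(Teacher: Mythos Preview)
Your proof is correct and follows essentially the same route as the paper: partial Fourier transform in $x$, the one-dimensional trace identity $|\hat u(\xi,0^+)|^2=-\int_0^\infty\partial_y|\hat u(\xi,y)|^2\,\mathrm{d}y$, Cauchy--Schwarz/H\"older to split the weight $|\xi|^{2s-1}=|\xi|^{s}\cdot|\xi|^{s-1}$ between the $m=0$ and $m=1$ terms, and then AM--GM plus a density argument. The paper's version is more terse, but the mechanism and the constants are identical.
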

\begin{proof}
Let $s\geq 1$ and denote $\hat{u}_1^+(\xi, y), \hat{u}_2^+(\xi, y)$ as the Fourier transform for  $u_1^+(x,y)$ and $u_2^+(x,y)$ with respect to $x$ by regarding them as tempered distributions. First, for the upper half plane and  any function $\mathbf u\in \ffn_\Gamma(\mathbb{R}^2) $ such that $\mathbf u$ vanishes as $y\to +\8$, we have
\begin{equation}
|\xi|^{2s-1} |\hat{u}_1^+(\xi, 0^+)|^2 = - 2|\xi|^{2s-1} \int_0^{+\8} \pt_y \hat{u}_1^+(\xi,y) \hat{u}_1^+(\xi, y) \ud y.
\end{equation}
Then by H\"older's inequality and Parserval's identity,
\begin{equation}
\|u_1^+\|^2_{\dot{H}^{s-\frac12}(\mathbb{R})} \leq 2\|(-\pt_{xx})^{\frac{s-1}{2}} \pt_y u_1^+\|  \|(-\pt_{xx})^\frac{s}{2}  u_1^+\|\leq \|\mathbf u\|^2_{\ffn_\Gamma(\mathbb{R}^2)}.
\end{equation}
This estimate holds also for $u_2^+$ and the lower half plane.
Thus by dense argument, we concludes \eqref{trace}.
\end{proof}

\subsection{Dirichlet to Neumann map}\label{cal_sigma12}
In this section, we study a representation in the sense that for given $u_1^\pm$ on $\Gamma$, we can uniquely determine the traction $(\sigma_{12}^\pm, \sigma_{22}^\pm)$ on $\Gamma$  using the  elasticity system in $\mathbb{R}^2\backslash \Gamma$. This is the Dirichlet to Neumann map for the linear elasticity system. As a consequence of  the Dirichlet to Neumann map we reduce the Euler--Lagrange equation \eqref{maineq} in $\mathbb{R}^2$ to a problem on $\Gamma$ (to be discussed in the next subsection). The following lemma gives the Dirichlet to Neumann map.
Note that P.V. denotes the Cauchy principal value of the integral.

\begin{lem}\label{lem2.2}
Assume that $\mathbf u\in \ff_\Gamma (\mathbb{R}^2)$ for some $s\geq \frac12$ satisfies the Euler--Lagrange equation \eqref{maineq}.
We have the following conclusions.
\begin{enumerate}[(i)]
\item (Fourier representation)
The solution $\mathbf u(x,y)$ in $\mathbb{R}^2$ can be represented entirely by $u_1^\pm(x,0^\pm)$ on $\Gamma$ as follows.
\begin{equation}\label{solu1}
\hat{u}_1^\pm(\xi,y)=\hat{u}_1^\pm(\xi,0^\pm)\left( 1-\frac{|\xi|}{2-2\nu}|y| \right)e^{-|\xi y|}, \,
\end{equation}
\begin{equation}\label{solu4}
\hat{u}_2^\pm(\xi,y)=-\frac{\hat{u}_1^+(\xi,0^+)}{2-2\nu} \left( (1-2\nu)\frac{i\xi}{|\xi|}+ i \xi |y| \right) e^{-|\xi y|}, 
\end{equation}
\item (Dirichlet to Neumann map) If $ u_1^+|_\Gamma\in \dot{H}^1(\mathbb{R})$ then
 $\sigma_{12}^\pm$ and $\sigma_{22}^\pm$ on $\Gamma$ are in $L^2(\mathbb{R})$ and can be expressed by
\begin{flalign}\label{sigmato}
&\sigma_{12}^+(x)=\sigma_{12}^-(x)=-\frac{G}{(1-\nu)\pi} \mathrm{P.V.} \int_{-\infty}^{+\infty} \frac{{u_1^+}'(s)}{x-s}\ud s,\\
&\sigma_{22}^+(x)=\sigma_{22}^-(x)=0. \label{sigmato1}
\end{flalign}
\item  If $ u_1^+|_\Gamma\in \dot{H}^1(\mathbb{R})$, then $\mathbf{u}$ also satisfies the elastic equation in whole space in the distributional sense, i.e. 
\begin{equation}
\nabla \cdot \sigma = \mathbf{0}, \quad \text{ in }\mathcal{D}'(\mathbb{R}^2).
\end{equation}
\end{enumerate}
\end{lem}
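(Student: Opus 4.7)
The plan is to take the Fourier transform in $x$ as the common tool across all three parts, reducing the vector PDE system to explicit ODEs in $y$, and then to propagate the consequences to the stress traces on $\Gamma$.

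\emph{Part (i), Fourier representation.} I would first Fourier-transform the Navier--Lam\'e system in $x$ to obtain a coupled linear ODE system in $y$ for $(\hat u_1(\xi,y),\hat u_2(\xi,y))$ on each half-line. The resulting characteristic polynomial factors as $(\lambda^2 - \xi^2)^2 = 0$, so decaying solutions on $y > 0$ are linear combinations of $e^{-|\xi|y}$ and $y e^{-|\xi|y}$; substituting back into the ODE constrains the coefficients so that the decaying solution is a two-parameter family, naturally parametrized by the traces $(\alpha_1,\beta_1):= (\hat u_1^+(\xi,0^+),\hat u_2^+(\xi,0^+))$. The crucial step is to collapse this to a one-parameter family in $\alpha_1$ alone: the symmetry \eqref{symsym} gives, via the mirror reflection $y\to -y$, $u_2\to -u_2$, that $\sigma_{22}^-(\xi)=-\sigma_{22}^+(\xi)$ on $\Gamma$, while the Euler--Lagrange condition \eqref{maineq} requires $\sigma_{22}^+=\sigma_{22}^-$; combined, $\sigma_{22}^+\equiv 0$ on $\Gamma$. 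Writing this constraint via the constitutive relation yields a linear relation of the form $\beta_1 = -\frac{1-2\nu}{2(1-\nu)}\,i\,\mathrm{sgn}(\xi)\,\alpha_1$, and substituting back produces the explicit formulas \eqref{solu1}--\eqref{solu4}.

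\emph{Part (ii), Dirichlet-to-Neumann map.} With the explicit expressions from (i), I would compute $\widehat{\sigma_{12}^+}(\xi)$ by differentiating \eqref{solu1}--\eqref{solu4} in $y$ at $y=0^+$ and combining via $\sigma_{12} = G(\pt_y u_1 + \pt_x u_2)$. The $\nu$-dependent coefficients collapse and yield
\[
\widehat{\sigma_{12}^+}(\xi) = -\frac{G}{1-\nu}\,|\xi|\,\hat u_1^+(\xi,0^+).
\]
Since $|\xi|$ is the Fourier symbol of $H\,\pt_x$ with $H$ the Hilbert transform, inversion produces the principal-value integral \eqref{sigmato}. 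The parallel computation for $\sigma_{22}^+$ returns zero by the construction in (i), giving \eqref{sigmato1}. Under the hypothesis $u_1^+|_\Gamma\in\dot H^1(\mathbb{R})$, Plancherel places $\sigma_{12}^\pm,\sigma_{22}^\pm\in L^2(\mathbb{R})$.

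\emph{Part (iii), Distributional identity.} For any $\bm\phi\in C_c^\infty(\mathbb{R}^2;\mathbb{R}^2)$, split
\[
\langle \nabla\cdot\sigma,\bm\phi\rangle = -\int_{y>0}\sigma_{ij}\pt_j\phi_i\,\ud x\,\ud y - \int_{y<0}\sigma_{ij}\pt_j\phi_i\,\ud x\,\ud y
\]
and integrate by parts on each half-plane. Because $\pt_j\sigma_{ij}=0$ pointwise in $\{y\ne 0\}$, the bulk contributions vanish and only the traces on $\Gamma$ survive, combining into $\int_\Gamma(\sigma_{i2}^- - \sigma_{i2}^+)\phi_i\,\ud x$; by (ii), $\sigma_{12}^+=\sigma_{12}^-$ and $\sigma_{22}^\pm=0$, so this jump is zero and the identity follows. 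The main obstacle is the bookkeeping in part (i): tracking $\mathrm{sgn}(\xi)$ factors and the algebra of $1-\nu$ versus $1-2\nu$ cleanly is what does the real work, and everything downstream rides on those formulas. In (iii) the only delicate point is justifying the integration by parts at the regularity $\mathbf u \in \ff_\Gamma(\mathbb{R}^2)$ with $u_1^+|_\Gamma\in\dot H^1$, which is handled by a standard density and cutoff argument since $\sigma\in L^2_{\loc}(\mathbb{R}^2\setminus\Gamma)$ with vanishing distributional divergence on each side and well-defined $L^2$ traces on $\Gamma$.
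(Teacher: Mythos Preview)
Your proposal is correct and follows the same strategy as the paper: Fourier-transform in $x$, solve the resulting fourth-order ODE in $y$ on each half-line, pin down the coefficients from the symmetry and $\sigma_{22}$ conditions, then compute the stress traces and test distributionally for part (iii). One small caution in part (i): the reflection should read $u_1\mapsto -u_1$, $u_2\mapsto u_2$ (not $u_2\mapsto -u_2$), and the relation $\sigma_{22}^-=-\sigma_{22}^+$ you invoke presupposes the full-space mirror symmetry, which strictly requires a uniqueness-of-decaying-extension argument first; the paper instead does the explicit eight-constant bookkeeping $A^\pm,B^\pm,C^\pm,D^\pm$ and records the mirror symmetry only afterward as a corollary.
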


\begin{proof}
Step 1. We solve the elasticity problem, i.e., the first equation in \eqref{maineq},  by using the Fourier transform with respect to $x$.  Note that $u_1(x,y)$ is not in $L^2(\mathbb{R})$ for a fixed $y$ due to its asymptotic behavior in \eqref{BC}. Therefore, we take the Fourier transform for  $u_1(x,y)$ and $u_2(x,y)$ with respect to $x$ by regarding them as tempered distributions.  For notation simplicity, denote the Fourier transforms as $\hat{u}_1(\xi,y)$ and $\hat{u}_2(\xi,y)$.

Taking the Fourier transform with respect to $x$ in the first equation in \eqref{maineq}, we have
\begin{equation}\label{fequ}
\begin{aligned}
(1-2\nu)\partial_{yy}\hat{u}_1-(2-2\nu)\xi^2\hat{u}_1+i\xi \partial_y\hat{u}_2=0, \\
(2-2\nu)\partial_{yy}\hat{u}_2-(1-2\nu)\xi^2\hat{u}_2+i\xi \partial_y\hat{u}_1=0,
\end{aligned}
\end{equation}
in the tempered distributional sense.
Eliminating $\hat{u}_2$, we obtain an ODE for $\hat{u}_1$
\begin{equation}\label{ode1}
\partial_y^4\hat{u}_1-2 \xi^2 \partial_y^2 \hat{u}_1+\xi^4 \hat{u}_1=0.
\end{equation}
The eigenvalues are determined by the characteristic equation
$k^4-2\xi^2k^2+\xi^4=0$,
which has two double roots $k_1=k_2=\xi, \, k_3=k_4=-\xi$.

We first consider the lower plane $y<0$.
Since $\mathbf u\in \ff_\Gamma (\mathbb{R}^2)$,  the negative roots are not acceptable in this case, and the general solution of \eqref{ode1} is given by
\begin{equation}\label{gen1}
\hat{u}_1^-=(A^-+B^-|\xi|y)e^{|\xi|y}, \ y<0,
\end{equation}
where constants $A^-, B^-$ may depend on $\xi$ and will be determined later.
Similar analysis gives general solutions
\begin{equation}
\hat{u}_2^-=\frac{|\xi|}{i \xi}(C^-+D^-|\xi|y)e^{|\xi|y}, \ y<0,
\end{equation}
and in the upper plane $y>0$,
\begin{flalign}
\hat{u}_1^+=(A^+-B^+|\xi|y)e^{-|\xi|y}, \ y>0, \label{gen3}\\
\hat{u}_2^+=\frac{|\xi|}{i\xi}(C^+-D^+|\xi|y)e^{-|\xi|y}, \ y>0, \label{gen4}
\end{flalign}
where constants $C^-, D^-, A^+, B^+, C^+, D^+$ may depend on $\xi$ and will be determined later.

Step 2. Now we express those constants in terms of $A^+$ using Euler--Lagrange equation \eqref{maineq} and boundary symmetry \eqref{symsym}.
First by induction,  we have the following  identities
\begin{equation}\label{diff-id}
\begin{aligned}
\pt_y ^m(e^{-|\xi|y})& = (-|\xi|)^m e^{-|\xi|y},\\
\pt_y^m (-|\xi|ye^{-|\xi|y} )&= (-|\xi|)^m (m-|\xi|y)e^{-|\xi|y},\\
 \pt_y ^m(e^{|\xi|y}) &= |\xi|^m e^{|\xi|y},\\
\pt_y^m (|\xi|ye^{|\xi|y} )&= |\xi|^m (m+|\xi|y)e^{|\xi|y},
\end{aligned}
\end{equation}
for any $m\in \mathbb{N}^+.$
Then plugging the general solutions of $\hat{u}_1$ and $\hat{u}_2$ in \eqref{gen1}--\eqref{gen4} into \eqref{fequ}, we obtain the relations
\begin{flalign}
D^+=-B^+, \quad D^+ = \frac{1}{4\nu-3}(A^++ C^+) \label{cons1.1}\\
D^-=B^-, \quad D^- = \frac{1}{4\nu-3}(C^--A^-). \label{cons1.2}
\end{flalign}

Second, from $u_1(x,0^+)=-u_1(x,0^-)$ and $u_2(x,0^+)=u_2(x,0^-)$ in the boundary condition in \eqref{symsym} we have
\begin{equation}\label{Apm}
A^+=-A^-, \quad C^+=C^-,
\end{equation}
respectively.
Combining \eqref{Apm} with \eqref{cons1.1} and \eqref{cons1.2}, we have
\begin{equation}\label{BDpm}
B^+ =-B^-,\quad D^+ =D^-.
\end{equation}

Third, from the second boundary condition in \eqref{maineq},  {i.e.,} $\sigma_{22}^+=\sigma_{22}^-$ on $\Gamma$, and using \eqref{Apm} and \eqref{BDpm}, we have
$$
2(C^++ D^+)+ \frac{2\nu}{1-\nu}A^+=0.
$$
Using this equation and \eqref{cons1.2}, we obtain
\begin{equation}\label{temp2}
C^+=C^-=\frac{1-2\nu}{2-2\nu}A^+.
\end{equation}
Thus, all the constants in the general solutions of $u_1$ and $u_2$ in \eqref{gen1}--\eqref{gen4} can be determined by the constant $A^+$ by \eqref{cons1.1}--\eqref{temp2} as follows.
\begin{eqnarray}
B^+ =-B^-= \frac{1}{2-2\nu}A^+\\
D^+= D^-= -\frac{1}{2-2\nu}A^+.
\end{eqnarray}
Therefore we can further express the solutions as
\begin{equation}
\hat{u}_1=-A^+\left( 1+\frac{|\xi|}{2-2\nu}y \right)e^{|\xi|y}, \ y<0,
\end{equation}
\begin{equation}
\hat{u}_2=-\frac{A^+}{2-2\nu} \left( (1-2\nu)\frac{i\xi}{|\xi|}- i \xi y \right) e^{|\xi|y}, \ y<0,
\end{equation}
\begin{flalign}
\hat{u}_1=A^+\left( 1-\frac{|\xi|}{2-2\nu}y \right) e^{-|\xi|y}, \ y>0, \label{solu3}\\
\hat{u}_2=-\frac{A^+}{2-2\nu} \left( (1-2\nu)\frac{i\xi}{|\xi|}+ i \xi y \right) e^{-|\xi|y}, \ y>0.
\end{flalign}
Since we also have $\hat{u}_1^+(\xi,0)=A^+(\xi)$ by \eqref{gen3}, the conclusion (ii) follows.

Step 3. Using these obtained results, we can calculate that on $\Gamma$,
\begin{flalign}
&\hat{\sigma}_{12}^+=\hat{\sigma}_{12}^-=G(\pt_y \hat{u}_1^++ i\xi \hat{u}_2^+)=-\frac{G}{1-\nu}|\xi|A^+,\label{fsigmato}\\
&\hat{\sigma}_{22}^+=\hat{\sigma}_{22}^-=0.\label{fsigmato1}
\end{flalign}
Equation \eqref{sigmato1} follows directly from \eqref{fsigmato1}.
If further $ u_1^+|_\Gamma\in \dot{H}^1(\mathbb{R})$, using the definition of the Hilbert transform $H(f)(x)=\frac{1}{\pi}{\rm P.V.}\int^{+\infty}_{-\infty}\frac{f(s)}{x-s} \ud s$
and its Fourier transform $\widehat{H(f)}=-i{\rm sgn}(\xi)\hat{f}$, we obtain \eqref{sigmato} from \eqref{fsigmato}. This proves part (i).

Step 4. 
Given any test function $\bm\varphi \in C_c^\infty(\mathbb{R}^2)$, if $ u_1^+|_\Gamma\in \dot{H}^1(\mathbb{R})$, we calculate $\nabla\cdot \sigma$ in the weak sense. 
    \begin{align*}
      &\int_{\mathbb{R}^2} (\nabla \cdot \sigma) \cdot  \bm\varphi  \ud x\ud y =  \int_{\mathbb{R}^2} - \sigma :\nabla \bm\varphi  \ud x\ud y = \int_{\mathbb{R}^2\backslash\Gamma} - \sigma :\nabla \bm\varphi  \ud x\ud y\\
      =&\int_{\{y>0\}\cup\{y<0\}}\partial_j\sigma_{ij} \varphi_i\ud x \ud y  -\int_{\{y=0^+\}}\sigma_{ij}^+ n_j^+ \varphi_i \ud x - \int_{\{y=0^-\}}\sigma_{ij}^- n_j^- \varphi_i \ud x\\
      =&\int_{\{y>0\}\cup\{y<0\}}\partial_j\sigma_{ij} \varphi_i\ud x \ud y +\int_\Gamma (\sigma_{22}^+ - \sigma_{22}^- ) \varphi_2 +(\sigma_{12}^+ -\sigma_{12}^- ) \varphi_1 \ud x,
    \end{align*}
    where { we use the  {symmetry} property of $\bm\varphi $.} Since we have
    $\nabla\cdot \sigma =0$ in $\mathbb{R}^2\backslash\Gamma$, $\sigma_{22}^+|_{\Gamma} = \sigma_{22}^-|_{\Gamma}=0$ and $\sigma_{12}^+|_\Gamma=\sigma_{12}^-|_\Gamma$,  we obtain
    {
    \begin{equation}
     \int_{\mathbb{R}^2} (\nabla \cdot \sigma) \cdot \bm\varphi  \ud x\ud y =  -\int_\Gamma(\sigma_{12}^+ -\sigma_{12}^- ) \varphi_1 \ud x=0,
    \end{equation}
    which implies
    $$\nabla\cdot \sigma = \mathbf 0, \quad \text{ in }\mathcal{D}'(\mathbb{R}^2).
    $$
    This property explains  {that at} the equilibrium state the force acting on the elastic materials is zero everywhere. To determine the displacement field in the whole space, the staring point is free system without external force. Therefore the elastic equation $\nabla \cdot \sigma =0$ holds for the whole space in distribution sense. All the deformation comes from internal defect, which, in our case, is the present of single straight dislocation line defect. Hence the full system can be regarded as a linear elastic system for the upper and the lower plane connected by shear displacement jump on the interface, i.e. the slip plane $\Gamma$.    }
\end{proof}
The  lemma above  allows us to reduce the full system to the slip plane $\Gamma$, called the reduced system (see next subsection), by establishing the Dirichlet to Neumann map.


\subsection{Reduced problem on $\Gamma$ and its solvability}\label{u1_bc}
From Lemma \ref{lem2.2} part (i), we know  that the solution of the Euler--Lagrange equation \eqref{maineq} is entirely determined by the displacement $u_1^+(x)=u_1(x,0^+)$ on $\Gamma$. From Lemma \ref{lem2.2} part (ii),  $u_1^+$ on $\Gamma$ can be determined by the second equation in the Euler--Lagrange equation \eqref{maineq}. In this sense, the equation of $u_1^+$ on $\Gamma$ is called the reduced problem on $\Gamma$ and will be discussed in this subsection. How to determine the solution of the Euler--Lagrange equation \eqref{maineq} in $\mathbb{R}^2$ from the solution of the reduced problem will be discussed in the next subsection.


In fact, using Lemma \ref{lem2.2} part (ii) and the second equation of the Euler--Lagrange equation \eqref{maineq}, we know that the displacement $u_1$ on $\Gamma$, $u_1^+(x)=u_1^+(x, 0^+)$, is a solution of the
 nonlocal equation \eqref{Gamma_eq} on $\Gamma$; i.e.
\begin{equation}
 -\frac{2G}{(1-\nu)\pi}{\mathrm{P.V.}} \int_{-\infty}^{+\infty} \frac{\pt_x u_1^+(s)}{x-s} \ud s=\well'(u_1^+), \quad x\in \mathbb{R},
\end{equation}
with the  boundary condition in \eqref{BC}, i.e.
\begin{equation}\label{Gamma_bc}
\lim_{x\to-\infty}u_1^+(x)=\frac{b}{4}, \quad \lim_{x\to+\infty}u_1^+(x)=-\frac{b}{4},
\end{equation}
where $W$ is the nonlinear potential satisfying \eqref{potential}. This is the reduced problem on $\Gamma$.

The nonlocal term on the right-hand side of \eqref{Gamma_eq} is the Hilbert transform with a constant coefficient $-2G/(1-\nu)$, which can also be written in terms of the fractional Laplacian operator:
\begin{equation}
 H(v')(x)=\frac{1}{\pi}{\mathrm{P.V.}} \int_{-\infty}^{+\infty} \frac{v'(s)}{x-s} \ud s= (-\pt_{xx})^{\frac{1}{2}}v(x).
 \end{equation}
Recall that the fractional Laplacian operator
$  (-\pt_{xx})^{s}v(x):= C_s  {\mathrm{P.V.}} \int_{\mathbb{R}} \frac{v(x)-v(y)}{|x-y|^{1+2s}} \ud y$,
where $C_s$ is a normalizing constant to guarantee the symbol of the resulting operator is $|\xi|^{2s}$. 

We summarize the above results into the following proposition.
\begin{prop}[Reduced PN model]
 Assume that $\mathbf u \in \ff_\Gamma(\mathbb{R}^2)$ for some $s>\frac12$ is a solution of the  Euler--Lagrange
equation \eqref{maineq} with the boundary condition  \eqref{BC}. Then the displacement $u_1$ on $\Gamma$, $u_1^+(x)=u_1^+(x, 0^+)$, is a solution of the
 nonlocal equation   \eqref{Gamma_eq}  with boundary conditions \eqref{Gamma_bc} at $x=\pm\infty$.
\end{prop}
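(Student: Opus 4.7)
The plan is to read off the reduced equation by combining the Dirichlet--to--Neumann map from Lemma \ref{lem2.2} with the interface condition in the Euler--Lagrange system \eqref{maineq}.

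First I would invoke Lemma \ref{lem2.2}, applied to the hypothesized solution $\mathbf u\in\ff_\Gamma(\mathbb{R}^2)$. Part (i) of that lemma yields the Fourier identity
\begin{equation*}
\widehat{\sigma_{12}^+}(\xi)=\widehat{\sigma_{12}^-}(\xi)=-\frac{G}{1-\nu}|\xi|\,\hat u_1^+(\xi,0^+),
\end{equation*}
and, once sufficient regularity of the trace is in hand, part (ii) rewrites this as
\begin{equation*}
\sigma_{12}^+(x)=\sigma_{12}^-(x)=-\frac{G}{(1-\nu)\pi}\,\mathrm{P.V.}\int_{-\infty}^{+\infty}\frac{(u_1^+)'(s)}{x-s}\ud s.
\end{equation*}
Substituting into the second interface equation of \eqref{maineq}, namely $\sigma_{12}^++\sigma_{12}^-=W'(u_1^+)$, directly delivers the nonlocal equation \eqref{Gamma_eq}. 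The boundary conditions \eqref{Gamma_bc} are nothing but the traces as $y\to 0^+$ of the far-field conditions \eqref{BC} already imposed on $\mathbf u$, so they are inherited automatically.

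The only step requiring attention is the regularity for invoking the principal-value form of the Hilbert transform. By the trace theorem for $\ff_\Gamma(\mathbb{R}^2)$ with $s>\tfrac12$, we only obtain $u_1^+|_\Gamma\in\dot H^{s}(\mathbb{R})$, which need not reach $\dot H^{1}(\mathbb{R})$. I expect this to be the main (though minor) obstacle; it is handled by reading the reduced equation first in the tempered-distribution sense through the Fourier identity of Lemma \ref{lem2.2}(i) (valid for all $s\geq\tfrac12$), and then promoting it to the principal-value formulation \eqref{Gamma_eq} whenever extra regularity permits. Under this interpretation the proposition follows as a direct corollary of Lemma \ref{lem2.2} together with the second equation of the Euler--Lagrange system \eqref{maineq}.
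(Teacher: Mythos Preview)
Your approach is essentially identical to the paper's: the proposition is stated there as a summary (``We summarize the above results into the following proposition''), with the argument being exactly to combine the Dirichlet--to--Neumann identity of Lemma~\ref{lem2.2} with the second interface equation of \eqref{maineq}. Your discussion of the $\dot H^1$ versus $\dot H^s$ regularity issue is in fact more careful than the paper, which does not address it explicitly at this point; your workaround via the Fourier identity \eqref{fsigmato} (valid for all $s\ge\tfrac12$) is the right way to close that gap.
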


The existence result of  equation \eqref{Gamma_eq} subject to  far field boundary conditions \eqref{Gamma_bc} has been given by Theorem 2.4 in \cite{XC2015} (see also Theorem 1.2 in \cite{XC2005}), after rescaling of \eqref{Gamma_eq} into the form $2(-\pt_{xx})^{\frac{1}{2}}u_1^+=f(u_1^+)$ on $\Gamma$.

\begin{prop}[Solvability of Reduced model]\label{thm2.3}
Consider  the
 nonlocal equation   \eqref{Gamma_eq}  with boundary conditions \eqref{Gamma_bc}.
\begin{enumerate}[(i)]
\item
(Theorem 2.4 in \cite{XC2015}) There exists a bounded solution $u_1^+(x)$ (unique up to translations) 
such that $\pt_x u_1^+(x)<0$ in $\mathbb{R}$.
\item (Theorem 1.6 in \cite{XC2005}) The solutions satisfy the asymptotic behavior $|\mp\frac{b}{4}-u_1^+(x)|\sim \frac{1}{|x|}$ as $x\to \pm\8.$
\end{enumerate}
\end{prop}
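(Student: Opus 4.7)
The plan is to reduce to the canonical form studied in \cite{XC2005, XC2015}. Rescaling $\tilde u(x) := u_1^+(\lambda x)$ with $\lambda = \frac{2G}{(1-\nu)\pi}$ (so that the constant in front of the Hilbert transform becomes $1$) converts \eqref{Gamma_eq} to $2(-\partial_{xx})^{1/2}\tilde u = f(\tilde u)$ with $f := -W'/\lambda$. The hypotheses on $W$ in \eqref{potential} translate into the balanced bistable conditions required by Cabr\'e--Sire and Cabr\'e--Sol\`a-Morales: $f\in C^{1,\alpha}$, the wells at $\pm b/4$ are strict minima of the corresponding primitive, and the primitive takes equal values there. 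Once in this form, part (i) is Theorem~2.4 of \cite{XC2015} and part (ii) is Theorem~1.6 of \cite{XC2005}.

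For part (i), the natural direct approach is variational. Fix a smooth monotone transition profile $\chi$ connecting $b/4$ at $-\infty$ to $-b/4$ at $+\infty$ and consider
\begin{equation*}
\mathcal{E}(v) = \frac{G}{(1-\nu)\pi}\,\bigl\|v\bigr\|_{\dot{H}^{1/2}(\mathbb{R})}^{2} + \int_{\mathbb{R}} \bigl(W(v)-W(b/4)\bigr)\,\ud x
\end{equation*}
over the affine space $\chi + \dot{H}^{1/2}(\mathbb{R})$. The nondegeneracy $W''(\pm b/4)>0$ together with the strict well condition yields coercivity of the potential term on the excess $v-\chi$. The key obstacle is translation invariance of $\mathcal{E}$, which destroys $L^2$-compactness of minimizing sequences; this is overcome by fixing a normalization such as $v(0)=0$ and invoking concentration-compactness. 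Standard elliptic regularity for the half-Laplacian promotes a minimizer to a classical $C^{2,\alpha}$ solution of \eqref{Gamma_eq}. Monotonicity $\partial_x u_1^+<0$ and uniqueness up to translations both follow from the sliding method: the maximum principle for $(-\partial_{xx})^{1/2}$ applied to the difference of $u_1^+$ and a translate $u_1^+(\cdot+t)$, together with the signs of $W''$ near the wells, forces ordered translates and prevents two distinct monotone profiles from connecting the same wells.

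For part (ii), the sharp decay $|u_1^+(x)\mp b/4|\sim 1/|x|$ comes from linearizing at the equilibria. Setting $v := u_1^+ \mp b/4$, the equation near $x=\pm\infty$ behaves like $\lambda (-\partial_{xx})^{1/2} v = -W''(\pm b/4)\,v + O(v^2)$, and since the fundamental solution of the half-Laplacian has $1/|x|$ tails, the barriers $\pm C/|x|$ and $\pm c/|x|$ provide matching super- and subsolutions. A standard comparison argument on $|x|\ge R$ with large $R$, using monotonicity from part (i) to fix the sign, pins down both the upper and the lower bound of the decay rate. The main technical obstacle throughout is the nonlocal nature of $(-\partial_{xx})^{1/2}$, which prevents purely ODE-based phase plane arguments and forces the use of Hilbert-transform identities and global comparison principles; this is precisely what is carried out in \cite{XC2005,XC2015} and what we invoke here.
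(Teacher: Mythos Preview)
Your first paragraph is exactly what the paper does: the paper states this proposition without proof, merely noting just before it that ``after rescaling of \eqref{Gamma_eq} into the form $2(-\pt_{xx})^{\frac{1}{2}}u_1^+=f(u_1^+)$ on $\Gamma$'' the results follow from the cited theorems in \cite{XC2015} and \cite{XC2005}. So as a proof of the proposition as it appears in this paper, you are done after the first paragraph.

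Your further two paragraphs go beyond the paper and sketch the content of the cited works. The outline is broadly correct (variational construction, sliding for monotonicity and uniqueness, linearization plus barriers for the $1/|x|$ decay), but one point deserves care: the functional $\mathcal{E}(v)=\frac{G}{(1-\nu)\pi}\|v\|_{\dot H^{1/2}}^2+\int(W(v)-W(b/4))$ is identically $+\infty$ on the affine class $\chi+\dot H^{1/2}$, since any transition profile $\chi$ connecting $b/4$ to $-b/4$ is \emph{not} in $\dot H^{1/2}$ (this is precisely the infinite-energy phenomenon emphasized throughout the paper; cf.\ Remark~\ref{rem-c} and \eqref{spec_s}). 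The correct variational object is the renormalized energy in the perturbation $w=v-\chi$, as in \eqref{perEgamma}. Also, the actual arguments in \cite{XC2005} proceed via the harmonic extension to $\mathbb{R}^2_+$ and a local elliptic problem with nonlinear Neumann condition rather than working purely on $\Gamma$; your direct nonlocal variational sketch is closer in spirit to \cite{Pala2013}. None of this affects the validity of the proposition as stated, which in this paper is purely a citation.
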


Next we prove a sharp elliptic regularity result for $u_1^+(x)$ in the Sobolev space.
\begin{prop}\label{regu}
The solution $u_1^+(x)$ to nonlocal equation \eqref{Gamma_eq} with boundary condition \eqref{Gamma_bc} satisfies
 $u_1^+ \in \dot{H}^s(\mathbb{R})$ for any  $s>\frac12$.
\end{prop}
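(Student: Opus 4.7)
The plan is a Fourier-based two-scale splitting that separates the low-frequency obstruction (coming from the nonzero asymptotic values of $u_1^+$ at $\pm\infty$) from the high-frequency smoothness (controlled by bootstrap on the equation), so that the critical threshold $s = 1/2$ emerges cleanly from the low-frequency analysis.

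First, I would decompose $u_1^+ = h + g$, where $h \in C^\infty(\mathbb{R})$ is a fixed smooth monotone profile with $h(x) = b/4$ for $x \leq -1$ and $h(x) = -b/4$ for $x \geq 1$. By Proposition~\ref{thm2.3}(ii), $g := u_1^+ - h$ is bounded with $|g(x)| = O(1/|x|)$ at infinity, so $g \in L^p(\mathbb{R})$ for every $p > 1$; in particular $g \in L^2$. Meanwhile $h' \in C_c^\infty(\mathbb{R})$ with $\int_{\mathbb{R}} h'(x)\,dx = -b/2 \neq 0$, so $\widehat{h'}$ is Schwartz with $\widehat{h'}(0) = -b/2$, and in the tempered distributional sense $\hat{h}(\xi) = \widehat{h'}(\xi)/(i\xi)$ away from $\xi = 0$. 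Thus $\hat{h}$ carries an isolated $1/\xi$ singularity at the origin and decays rapidly at infinity.

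Second, for $s > 0$ the almost-everywhere identity $|\xi|^{2s}|\hat{h}(\xi)|^2 = |\xi|^{2s-2}|\widehat{h'}(\xi)|^2$ yields
\[
\|h\|_{\dot{H}^s(\mathbb{R})}^2 = \int_{\mathbb{R}} |\xi|^{2s-2}|\widehat{h'}(\xi)|^2\,d\xi.
\]
Since $\widehat{h'}$ is Schwartz with $\widehat{h'}(0) \neq 0$, this integral is finite if and only if $|\xi|^{2s-2}$ is locally integrable near $\xi = 0$, i.e., if and only if $s > 1/2$. This pins down the sharp critical exponent and matches the $\Gamma(2s-1)$ singularity in the explicit computation of Remark~\ref{rem-c}.

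Third, I handle $g$ at high frequencies. Fractional Schauder bootstrap on \eqref{Gamma_eq} (using the regularity of $W$ from \eqref{potential}) upgrades $u_1^+$ to $C^{k,\alpha}$ for each $k$ permitted by $W$, and differentiating \eqref{Gamma_eq} combined with Proposition~\ref{thm2.3}(ii) yields decay of derivatives, e.g., $\partial_x u_1^+(x) = O(1/x^2)$ and $\partial_{xx} u_1^+(x) = O(1/|x|^3)$ as $|x| \to \infty$. Hence $g, g', g'' \in L^2(\mathbb{R})$, so $g \in H^2(\mathbb{R}) \subset \dot{H}^s(\mathbb{R})$ for $s \in (0,2]$; iterating gives $g \in \dot{H}^s$ for every $s > 0$ within the available regularity range. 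Combining with Step~2 through $\|u_1^+\|_{\dot{H}^s} \leq \|h\|_{\dot{H}^s} + \|g\|_{\dot{H}^s}$ delivers $u_1^+ \in \dot{H}^s(\mathbb{R})$ for every $s > 1/2$.

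The main obstacle is securing the pointwise decay of the derivatives $\partial_x u_1^+$ and $\partial_{xx} u_1^+$ at infinity, since Proposition~\ref{thm2.3}(ii) only directly controls $u_1^+$ itself. My preferred route is to differentiate \eqref{Gamma_eq}, obtain a linearized nonlocal equation for $\partial_x u_1^+$ around the stable endpoints $\pm b/4$ (where $W''(\pm b/4) > 0$ by \eqref{potential}), and extract the decay by inverting the half-Laplacian against its Poisson kernel and estimating the resulting convolution directly. Once this derivative decay is in hand, the rest of the argument is a brief Fourier computation that isolates $s = 1/2$ as the sharp obstruction contributed entirely by the low-frequency (asymptotic) structure of $u_1^+$.
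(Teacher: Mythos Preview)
Your approach is correct in outline but takes a genuinely different route from the paper. The paper never decomposes $u_1^+$ into a model transition profile plus a remainder; instead it works directly with the equation in Fourier/Sobolev space. Step~1 there obtains $u_1^+\in\dot H^1$ immediately from $W'(u_1^+)\sim 1/x\in L^2$ and Parseval; Step~2 gets the fractional range $\tfrac12<s<1$ from $W'(u_1^+)\in L^p$ for all $p>1$ via the Hardy--Littlewood--Sobolev inequality applied to the Riesz potential $(-\partial_{xx})^{(s-1)/2}$; Steps~3--4 bootstrap using the Gagliardo-seminorm composition estimate $\|W'\!\circ u_1^+\|_{\dot H^s}\le(\max|W''|)\,\|u_1^+\|_{\dot H^s}$. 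The virtue of the paper's argument is that it never needs pointwise decay of derivatives of $u_1^+$: everything is reduced to integrability of $W'(u_1^+)$ and a Lipschitz bound on $W'$, both available directly from Proposition~\ref{thm2.3}(ii) and \eqref{potential}.

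Your route, by contrast, isolates the $s=\tfrac12$ threshold very transparently through the low-frequency analysis of $h$, which is conceptually appealing and explains the $\Gamma(2s-1)$ blow-up in \eqref{spec_s}. The cost is that you must then prove $g',g''\in L^2$, i.e.\ pointwise decay $\partial_x u_1^+=O(x^{-2})$, $\partial_{xx}u_1^+=O(|x|^{-3})$. You correctly flag this as the main obstacle, and your proposed mechanism (differentiate, linearize around the stable wells where $W''(\pm b/4)>0$, invert against the Poisson kernel) is the right idea and does appear in the fractional Allen--Cahn literature, but it is a nontrivial lemma not supplied by Proposition~\ref{thm2.3} or the references cited here. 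The paper's HLS-based argument sidesteps this entirely, so if you want a self-contained proof within the present framework, that route is shorter; if you are willing to import or prove the derivative decay, your decomposition gives a cleaner picture of where the critical exponent comes from.
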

\begin{proof}
Step 1. We prove $u_1^+\in \dot{H}^1(\mathbb{R})$. From Proposition \ref{thm2.3} (ii) and Taylor expansion of $f$ at $u_1^+(\pm\8)=\mp \frac{b}{4}$,  $f(u_1^+(x))=W'(u_1^+(x))\sim \frac{1}{x}$ as $x\to \pm \8$. Therefore $f\circ u_1^+\in L^2(\mathbb{R})$. From  \eqref{Gamma_eq} and Parserval's identity we obtain
\begin{equation}
\||\xi|\hat{u}_1\|_{L^2(\mathbb{R})}= c_s\|\widehat{f\circ u_1^+}\|_{L^2(\mathbb{R})}\leq C,
\end{equation}
where $c_s$ is a rescaling constant and we concludes $u_1^+\in \dot{H}^1(\mathbb{R})$.

Step 2. We prove $u_1^+\in \dot{H}^s(\mathbb{R})$ for any $\frac12<s<1$  using the property $\fcc\sim \frac{1}{x}\in L^p(\mathbb{R})$ for any $p>1$. 
From \eqref{Gamma_eq}, we have for $\frac12<s<1$
\begin{equation}
(-\pt_{xx})^{\frac{s}{2}}u_1^+ = (-\pt_{xx})^{\frac{s-1}{2}}(\fcc)=(-\pt_{xx})^{-\frac{1-s}{2}}(\fcc),
\end{equation}
 where $(-\pt_{xx})^{-\frac{1-s}{2}} (\fcc)$ can be represented by the Riesz potential $I_{1-s}g:=c\int_{\mathbb{R}} |x-y|^{s}g (y) \ud y$ when $g\in L^p(\mathbb{R})$ such that $\frac32=\frac{1}{p}+s.$ Precisely,  from the Hardy-Littlewood-Sobolev Theorem for fractional integration \cite[p119, Theorem 1]{stein}
 \begin{equation}
 \|u_1^+\|_{\dot{H}^s}^2 = \|(-\pt_{xx})^{\frac{s}{2}}u_1^+\|_{L^2}^2=\|I_{1-s} (\fcc)\|_{L^2(\mathbb{R})}^2\leq c\|\fcc\|_{L^p(\mathbb{R})}^2.
 \end{equation}
 This concludes $u_1^+\in \dot{H}^s(\mathbb{R})$ for any $\frac12<s<1$.

Step 3. We prove $u_1^+\in \dot{H}^s(\mathbb{R})$ for any $1<s\leq\frac{3}{2}$. First we notice for any $s>0$,
\begin{equation}
\|\fcc\|_{\dot{H}^s} ^2= \frac{C_s}{2}\int\int \frac{|f(u_1^+(x))-f(u_1^+(y))|^2}{|x-y|^{1+2s}} \ud x \ud y \leq (\max f') \|u_1^+\|_{\dot{H}^s}^2.
\end{equation}
Therefore from Step 2, $u_1^+\in \dot{H}^s(\mathbb{R})$ for $\frac{1}{2}<s\leq 1$ implies $\fcc \in \dot{H}^s(\mathbb{R})$ for $\frac{1}{2}<s\leq 1$.
Then by \eqref{Gamma_eq} and Parserval's identity, we have for any $1<s\leq \frac{3}{2}$
\begin{equation}
\|u_1^+\|_{\dot{H}^s}^2 \leq \int_{\mathbb{R}} \xi^{s-\frac12} \hat{u}_1^+ \xi^{s-\frac12} \widehat\fcc \ud \xi \leq \|u_1^+\|_{\dot{H}^{s-\frac12}} \|\fcc\|_{\dot{H}^{s-\frac12}}<C
\end{equation}
due to both $u, \fcc \in \dot{H}^s(\mathbb{R})$ for $\frac12<s\leq 1$.

Step 4. In summary, for $s\in(\frac12, \frac32]$, we have $u_1^+\in \dot{H}^s(\mathbb{R})$ from Steps 1--3. By induction, we only show how to improve $s\in(\frac12, \frac32]$ to $s\in(\frac32, \frac52].$
Since $u\in \dot{H}^s$ for $s\in(\frac12,\frac32]$ we know $\fcc\in \dot{H}^s$ for $s\in(\frac12,\frac32]$. Thus by Parserval's identity, we have for $s\in (\frac12,\frac32]$
\begin{equation}
\||\xi|^{1+s}\hat{u}_1^+\|_{L^2}= \||\xi|^s \fcc\|_{L^2}\leq C,
\end{equation}
which concludes $u_1^+\in \dot{H}^{s}$ for any $s\in(\frac32, \frac52].$
\end{proof}

\subsection{Elastic extension in $\mathbb{R}^2\backslash \Gamma$ and its property}

 Analogue to the harmonic extension, we introduce an elastic extension that extends the function on $\Gamma$ to the two half spaces based on the elastic system in $\mathbb{R}^2\backslash\Gamma$. This is summarized into the following lemma.
\begin{thm}\label{def_els}
  Assume that $ \varphi_\Gamma\in {  \dot{H}^s(\mathbb{R})}$ for some real number $s\geq \frac12$. There exists a unique solution $\mathbf u\in \ff_\Gamma(\mathbb{R}^2)$ to the following elasticity problem in $\mathbb{R}^2\backslash \Gamma$:
\begin{equation}\label{elstemp}
\left\{
    \begin{array}{ll}
     & \Delta \mathbf u+\frac{1}{1-2\nu }\nabla( \nabla\cdot \mathbf u)=0 \quad {\rm in}\ \mathbb{R}^2\backslash \Gamma, \vspace{1ex}\\
     & u_1^+(x,0^+) =\varphi_\Gamma (x) \quad {\rm on} \ \Gamma, \vspace{1ex} \\
     & \sigma_{22}^+(x)=\sigma_{22}^-(x) \quad  \ {\rm on} \ \Gamma.
    \end{array}
  \right.
\end{equation}
And the solution satisfies the stability and regularity estimate
\begin{equation}\label{regu-es}
\|\mathbf u\|_{\ff_\Gamma(\mathbb{R}^2)} \leq C \|u_1^+\|_{\dot{H}^{s}(\mathbb{R})}.
\end{equation}
We call  solution $\mathbf u$ the elastic extension of $\varphi_\Gamma$. 
\end{thm}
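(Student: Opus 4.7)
The plan is to use the Fourier representation derived in Lemma \ref{lem2.2} to construct the extension explicitly, then verify that this construction is the unique solution with the required regularity. Specifically, I would define $\mathbf u$ by prescribing $\hat u_1^+(\xi,0^+) := \hat\varphi_\Gamma(\xi)$ (and $\hat u_1^-(\xi,0^-) = -\hat\varphi_\Gamma(\xi)$ by the symmetry built into $\ff_\Gamma(\mathbb{R}^2)$), and then using the explicit formulas \eqref{solu1}--\eqref{solu4} to define $\hat{\mathbf u}(\xi,y)$ for $y \neq 0$. Taking the inverse Fourier transform in $x$ gives a candidate $\mathbf u$. Existence is then reduced to showing (a) $\mathbf u$ solves the PDE and the $\sigma_{22}$-continuity condition; (b) the trace $u_1^+(x,0^+) = \varphi_\Gamma(x)$ holds; and (c) the energy estimate \eqref{regu-es} is valid.

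Points (a) and (b) are essentially built into the construction: the formulas \eqref{solu1}--\eqref{solu4} were derived in Lemma \ref{lem2.2} precisely by solving the ODE system \eqref{fequ} (which is the Fourier transform of the first line of \eqref{elstemp}) subject to decay at $|y|\to\infty$ and the matching conditions \eqref{Apm}, \eqref{BDpm}, \eqref{temp2}. These matching conditions encode the symmetry and the continuity $\sigma_{22}^+=\sigma_{22}^-$, so the resulting $\mathbf u$ automatically satisfies every condition in \eqref{elstemp}. Uniqueness follows immediately from Lemma \ref{lem2.2}(i): any other solution $\tilde{\mathbf u}\in \ff_\Gamma(\mathbb{R}^2)$ with the same trace $\varphi_\Gamma$ must have the same Fourier representation, hence must equal $\mathbf u$.

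The substantive step is the regularity estimate \eqref{regu-es}, which is where I would concentrate the work. Using Plancherel in $x$, for each integer $0\leq m\leq[s+\tfrac12]$, I need to bound
\begin{equation}
\int_0^\infty\!\!\int_\mathbb{R} |\xi|^{2s+1-2m}\,|\partial_y^m \hat u_1^+(\xi,y)|^2\,\ud\xi\,\ud y
\end{equation}
(and the analogous integral for $u_2$, plus the lower half plane). Applying the differentiation identities \eqref{diff-id} to \eqref{solu1} yields $\partial_y^m \hat u_1^+(\xi,y) = \hat\varphi_\Gamma(\xi)(-|\xi|)^m\bigl[c_{1,m}+c_{2,m}|\xi|y\bigr]e^{-|\xi|y}$ for constants depending only on $m,\nu$. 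Squaring and using the elementary identities $\int_0^\infty |\xi|^{2m+1}e^{-2|\xi|y}\,\ud y = \tfrac12|\xi|^{2m}$ and $\int_0^\infty |\xi|^{2m+3}y^2 e^{-2|\xi|y}\,\ud y = \tfrac12|\xi|^{2m}$ (up to factors) collapses the $y$-integral into $C_{m,\nu}\,|\xi|^{2m-1}|\hat\varphi_\Gamma(\xi)|^2$. Multiplying by $|\xi|^{2s+1-2m}$ and integrating in $\xi$ produces exactly $C_{m,\nu}\|\varphi_\Gamma\|_{\dot H^s(\mathbb{R})}^2$. The same computation, applied to \eqref{solu4}, handles $u_2$.

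The main obstacle is simply the bookkeeping in this last step: the power counting must balance exactly across every one of the $[s+\tfrac12]+1$ integer values of $m$, and the polynomial prefactor $|\xi|y$ that appears in \eqref{solu1}--\eqref{solu4} produces cross terms whose contributions must be controlled by the same elementary gamma-type integrals. Since the restriction $s\geq\tfrac12$ guarantees $2s+1-2m \geq 0$ for $m\leq[s+\tfrac12]$, no singularity at $\xi=0$ arises, and the estimate closes uniformly in $m$. Once \eqref{regu-es} is established for all such $m$, summing yields $\|\mathbf u\|_{\ff_\Gamma(\mathbb{R}^2)}\leq C\|\varphi_\Gamma\|_{\dot H^s(\mathbb{R})}$, completing the proof.
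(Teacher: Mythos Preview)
Your proposal is correct and follows essentially the same approach as the paper: both invoke the Fourier representation of Lemma~\ref{lem2.2}(i) for existence and uniqueness, then use the differentiation identities \eqref{diff-id} and integrate the resulting $(1+|\xi|^2y^2)e^{-2|\xi|y}$-type expressions in $y$ to gain a factor $|\xi|^{-1}$ and recover $\|\varphi_\Gamma\|_{\dot H^s}^2$. The only cosmetic difference is that the paper writes down an explicit antiderivative for $(1+|\xi|^2y^2)e^{-2|\xi|y}$, whereas you phrase the same computation in terms of the elementary moment integrals $\int_0^\infty y^k e^{-2|\xi|y}\,\ud y$.
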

\begin{proof}
Step 1. 
 It can be seen from  Lemma~\ref{lem2.2} part (i) that the solution $\mathbf u$ of the elastic system in $\mathbb{R}^2\backslash \Gamma$ is given by  Fourier representation \eqref{solu1}--\eqref{solu4} with the symmetric relations in \eqref{Apm}  and \eqref{BDpm}.  It shows that the solution  $\mathbf u$ exists and is uniquely determined by $u_1^+(x,0^+)=\varphi_\Gamma(x)$. 

 Step 2.  Regularity of $\mathbf{u}$ in $\mathbb{R}^2\backslash\Gamma$.
 By the Fourier representation formula \eqref{solu1}-\eqref{solu4}, we can take any derivatives w.r.t $y$. Recall  identities \eqref{diff-id}.
For any $0\leq m\leq [s+\frac12]$, we have for $y>0$
\begin{equation}\label{es-f}
(i \xi)^{s+\frac12-m} \pt_y^m \hat{u}^+_1(\xi, y)= (i \xi)^{s+\frac12-m} (-|\xi|)^m \hat{u}_1^+(\xi,0^+)\left( 1-\frac{m-|\xi|y}{2-2\nu} \right) e^{-|\xi|y}.
\end{equation}

For  $y>0$, from \eqref{es-f}, we estimate
\begin{equation}
\int_{\mathbb{R}^2_+} |(-\pt_{xx})^{\frac{s+\frac12-m}{2}} \pt_y^m u_1^+|^2 \ud x \ud y\leq C\int_{\mathbb{R}^2_+} |\xi|^{2s+1}|\hat{u}_1^+(\xi, 0^+)|^2 (1+|\xi|^2y^2)e^{-2|\xi|y} \ud \xi \ud y.
\end{equation}
Notice the identity
$$-\Big[\left( \frac{3+2|\xi|^2 y^2+2|\xi|y}{4 |\xi|} \right) e^{-2 |\xi| y}\Big]'=(1+|\xi|^2 y^2 )e^{-2|\xi|y}.$$ 
We have
$$-\left( \frac{3+2|\xi|^2 y^2+2|\xi|y}{4 |\xi|} \right) e^{-2 |\xi| y}\Big|_0^\8\leq \frac{3}{4|\xi|}.$$
Thus we obtain the uniform bound
\begin{equation}\label{es-tm}
\int_{\mathbb{R}^2_+} |(-\pt_{xx})^{\frac{s+\frac12-m}{2}} \pt_y^m u_1^+|^2 \ud x \ud y\leq C\int_{\mathbb{R}} |\xi|^{2s}|\hat{u}_1^+(\xi, 0^+)|^2  \ud \xi= C \|u_1^+\|_{\dot{H}^{s}}^2.
\end{equation}
for $y>0$ and any $0\leq m\leq [s+\frac12]$. This estimate also holds for $u_2$ or $y<0$.  Therefore we obtain the stability and regularity estimate
\begin{equation}
\|\mathbf u\|_{\ff_\Gamma(\mathbb{R}^2)}^2 \leq C \|u_1^+\|^2_{\dot{H}^{s}(\mathbb{R})}.
\end{equation}
\end{proof}
\begin{rem}
The elastic extension established in Theorem \ref{def_els} shows that for any function  $u_1^+|_\Gamma\in\dot{H}^s(\mathbb{R})$ with $s\geq \frac12$, there exists $\mathbf u\in \ff_\Gamma(\mathbb{R}^2)$ such that $u_1^+|_\Gamma$ is the trace of the first component of $\mathbf u$. This is an inverse trace theorem.
\end{rem}


\subsection{Existence, uniqueness and regularity  for the full PN model}
In this section, we establish the existence and uniqueness of the solution to Euler--Lagrange equation \eqref{maineq}, which is referred to as the full PN model, subject to the boundary conditions \eqref{symsym} and \eqref{BC}. After the reduced model on $\Gamma$ is solved in the last subsection, the solution of the full model is determined by an elastic extension from the solution on $\Gamma$.

We first have the following  mirror symmetry property for the displacement $\mathbf u$ in the whole space due to mirror symmetry boundary conditions in \eqref{symsym}.

\begin{lem}\label{equiv}
 Let $\mathbf u\in \ff_\Gamma(\mathbb{R}^2)$ for some $s\geq \frac12$ be the solution to the elasticity system in $\mathbb{R}^2\backslash \Gamma$ (the first equation in \eqref{maineq}).
Then $\mathbf u$  satisfies the mirror symmetry condition in the whole space
\begin{equation}\label{bc02}
u_1^+(x, y)=-u_1^-(x,-y),\quad u_2^+(x,y)=u_2^-(x,-y), \quad x\in \mathbb{R}, y\in \mathbb{R}^+.
\end{equation}
\end{lem}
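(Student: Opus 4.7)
The plan is to derive the whole-space mirror symmetry \eqref{bc02} directly from the Fourier representation of solutions to the elasticity equation in each half-space, using only the trace symmetry that is built into the definition of $\ff_\Gamma(\mathbb{R}^2)$. In particular, I will not need to assume the traction conditions $\sigma_{22}^+=\sigma_{22}^-$ or $\sigma_{12}^++\sigma_{12}^-=\well'(u_1^+)$ from \eqref{maineq}.

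First I would take the Fourier transform in $x$ of the elasticity system $\Delta\mathbf u+\frac{1}{1-2\nu}\nabla(\nabla\cdot\mathbf u)=\mathbf 0$, interpreted in the tempered distributional sense as in Step 1 of the proof of Lemma \ref{lem2.2}. In each half-space this yields the coupled ODE system \eqref{fequ}, and eliminating $\hat u_2$ produces the fourth-order ODE \eqref{ode1} with characteristic roots $\pm|\xi|$ of multiplicity two. The requirement $\mathbf u\in\ff_\Gamma(\mathbb{R}^2)$ rules out the exponentially growing modes at $y=\pm\infty$, so $\hat u_1^\pm$ and $\hat u_2^\pm$ must take the form \eqref{gen1}--\eqref{gen4} for suitable $\xi$-dependent coefficients $A^\pm,B^\pm,C^\pm,D^\pm$. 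Substituting these ansatz formulas back into \eqref{fequ} forces the coupling relations \eqref{cons1.1}--\eqref{cons1.2}, which come purely from the PDE.

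Next I would impose the trace symmetry, which is part of the definition of $\ff_\Gamma(\mathbb{R}^2)$ and gives $\hat u_1^+(\xi,0^+)=-\hat u_1^-(\xi,0^-)$ and $\hat u_2^+(\xi,0^+)=\hat u_2^-(\xi,0^-)$, i.e.\ $A^+=-A^-$ and $C^+=C^-$. Combined with \eqref{cons1.1}--\eqref{cons1.2}, this also yields $B^+=-B^-$ and $D^+=D^-$. Plugging these four identities into \eqref{gen1}--\eqref{gen4}, a direct computation gives
\begin{equation*}
\hat u_1^+(\xi,y)=(A^+-B^+|\xi|y)e^{-|\xi|y}=-\hat u_1^-(\xi,-y),\qquad
\hat u_2^+(\xi,y)=\tfrac{|\xi|}{i\xi}(C^+-D^+|\xi|y)e^{-|\xi|y}=\hat u_2^-(\xi,-y)
\end{equation*}
for every $y>0$. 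Inverting the Fourier transform in $x$ then produces \eqref{bc02}.

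The main subtlety to be careful about is the distributional Fourier framework: $u_1(\cdot,y)$ is not in $L^2(\mathbb R)$ because of the far-field values $\pm b/4$, but since $\mathbf u\in\ff_\Gamma(\mathbb{R}^2)$ the derivatives of $\mathbf u$ are square-integrable with controlled growth, so the tempered distributional Fourier transform (exactly as used in Step 1 of the proof of Lemma \ref{lem2.2}) is well defined and the ODE analysis above is rigorous. A cleaner alternative is to verify directly that the reflected field $\tilde{\mathbf u}(x,y):=(-u_1(x,-y),\,u_2(x,-y))$ solves the same elasticity system in $\mathbb R^2\backslash\Gamma$ and has the same trace as $\mathbf u$ along $\Gamma$, and then invoke the uniqueness built into the Fourier representation above to conclude $\tilde{\mathbf u}=\mathbf u$; this packaging avoids an explicit ansatz chase but still rests on the same ODE analysis.
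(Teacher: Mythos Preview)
Your argument is correct and matches the paper's own proof, which simply cites the Fourier representations \eqref{gen1}--\eqref{gen4} together with the coefficient identities \eqref{Apm} and \eqref{BDpm} from the proof of Lemma~\ref{lem2.2}; you have spelled out precisely that computation. Your observation that the traction condition $\sigma_{22}^+=\sigma_{22}^-$ is not needed is also consistent with the paper, since \eqref{Apm} and \eqref{BDpm} follow from the PDE relations \eqref{cons1.1}--\eqref{cons1.2} and the trace symmetry alone.
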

The proof of this lemma directly follows the expressions of $u_1$ and $u_2$ in  \eqref{gen1}--\eqref{gen4} and the relationship of the coefficients in \eqref{Apm}  and \eqref{BDpm}  in the proof of Lemma \ref{lem2.2}.


After establishing the connection between solutions to reduced model and the full model by the elastic extension. We state the existence and regularity theorem below.
\begin{thm}\label{thm_steady}
Assume that the nonlinear potential $W$ satisfies \eqref{potential}. We have the following conclusions for  solutions to the full PN model.
\begin{enumerate}[(i)]
\item   There exists a classical solution (unique up to translations)   $\mathbf u\in \ff_\Gamma(\mathbb{R}^2)$ for any $s>\frac12$  to problem \eqref{maineq} with boundary conditions \eqref{symsym} and \eqref{BC}. Moreover, the solution $\mathbf u$ satisfies the symmetry condition in \eqref{bc02}.
\item The displacement component $u_1$ of the solution $\mathbf u$ on $\Gamma$ $u_1^+(x)=u_1^+(x, 0^+)$ is a classical solution in $\dot{H}^s$, for any $s>\frac12$ of the
 nonlocal equation   \eqref{Gamma_eq}  with boundary conditions \eqref{Gamma_bc} at $x=\pm\infty$.
\item The solution $\mathbf u$ can be regarded as the elastic extension of $u_1^+(x)$ on $\Gamma$ (which is the solution of the reduced problem of \eqref{Gamma_eq} and \eqref{Gamma_bc}) defined in Theorem~\ref{def_els}.
\end{enumerate}
\end{thm}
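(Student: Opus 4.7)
The plan is to build the solution of the full PN model from the reduced problem on $\Gamma$ via the elastic extension, in effect reversing the reduction carried out in the previous subsections. I would first settle part (ii), then use it as input to parts (i) and (iii).

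For part (ii), existence of a bounded, strictly monotonic solution $u_1^+$ of the nonlocal equation \eqref{Gamma_eq} with boundary conditions \eqref{Gamma_bc}, unique up to translations, is exactly Proposition \ref{thm2.3}(i) (after absorbing the constant $2G/((1-\nu)\pi)$ into a rescaling of $W$). Proposition \ref{regu} then upgrades this solution to $u_1^+ \in \dot{H}^s(\mathbb{R})$ for every $s>\tfrac12$. Since $W\in C^{2,\alpha}$, a standard Schauder-type bootstrap on \eqref{Gamma_eq} (together with the decay from Proposition \ref{thm2.3}(ii)) gives $u_1^+ \in C^{2,\alpha}(\mathbb{R})$, so $u_1^+$ is a classical solution of the reduced problem.

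For parts (i) and (iii), I would apply Theorem \ref{def_els} with $\varphi_\Gamma=u_1^+$ to produce an elastic extension $\mathbf u \in \dot{\Lambda}^{s+1/2}_\Gamma(\mathbb{R}^2)$ for every $s>\tfrac12$. By construction, $\mathbf u$ solves the first and third equations of \eqref{maineq} and has trace $u_1^+$ on $\Gamma$, while Lemma \ref{equiv} gives the mirror symmetry \eqref{bc02}, from which the far-field conditions \eqref{BC} follow via \eqref{Gamma_bc}. The point that still needs checking is the second equation of \eqref{maineq}, namely $\sigma_{12}^+ + \sigma_{12}^- = W'(u_1^+)$ on $\Gamma$. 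Here I invoke Lemma \ref{lem2.2}(ii): since $u_1^+ \in \dot{H}^1(\mathbb{R})$ (from part (ii) with $s=1$), the Dirichlet-to-Neumann map yields
\begin{equation*}
\sigma_{12}^+(x)=\sigma_{12}^-(x)=-\frac{G}{(1-\nu)\pi}\,\mathrm{P.V.}\!\int_{-\infty}^{+\infty}\frac{(u_1^+)'(s)}{x-s}\,\mathrm ds,
\end{equation*}
so $\sigma_{12}^+ + \sigma_{12}^-$ equals exactly the left-hand side of \eqref{Gamma_eq}, which by the reduced equation equals $W'(u_1^+)$. Classical $C^2$ regularity of $\mathbf u$ in $\mathbb{R}^2\setminus\Gamma$ follows from interior elliptic regularity for the Lamé system, using the Fourier representation \eqref{solu1}-\eqref{solu4} to control all derivatives away from $\Gamma$.

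For uniqueness up to translations, suppose $\mathbf u$ and $\tilde{\mathbf u}$ are two classical solutions in $\dot{\Lambda}^{s+1/2}_\Gamma(\mathbb{R}^2)$. By Lemma \ref{Lem2.2} (more precisely, by reading off the $\sigma_{12}$-boundary condition together with Lemma \ref{lem2.2}(ii)), their traces $u_1^+$ and $\tilde u_1^+$ both solve the reduced problem \eqref{Gamma_eq}-\eqref{Gamma_bc}, hence differ by a translation in $x$ by Proposition \ref{thm2.3}(i). Theorem \ref{def_els} then gives uniqueness of the elastic extension, so $\mathbf u$ and $\tilde{\mathbf u}$ differ by the same translation. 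The main obstacle I anticipate is keeping the two uses of Lemma \ref{lem2.2} carefully separated — once forward, to read off $\sigma_{12}^\pm$ of the extension and thereby verify the Euler--Lagrange boundary condition, and once backward, to argue that any full-space solution must restrict to a solution of the reduced equation — since both rely on the trace $u_1^+\in \dot{H}^1(\mathbb{R})$ being well-defined, which the trace theorem and Proposition \ref{regu} indeed guarantee.
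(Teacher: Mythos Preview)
Your proposal is correct and follows essentially the same strategy as the paper: solve the reduced problem via Proposition~\ref{thm2.3} and Proposition~\ref{regu}, then take the elastic extension via Theorem~\ref{def_els} and invoke Lemma~\ref{equiv} for the symmetry. You are in fact more careful than the paper's terse proof in explicitly verifying the second boundary equation of \eqref{maineq} via the Dirichlet-to-Neumann map and in spelling out the uniqueness argument by restricting an arbitrary full-space solution to $\Gamma$, both of which the paper leaves implicit.
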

\begin{proof}
 We first apply Proposition~\ref{thm2.3}  to obtain the existence of a  solution (unique up to translations) $u_1^+(x)$ to problem \eqref{Gamma_eq} with boundary condition \eqref{Gamma_bc},
such that  $\pt_x u_1^+(x)<0$ in $\mathbb{R}$. The regularity $ {u}_1^+\in \dot{H}^s(\mathbb{R})$ for any $s>\frac12$ comes from Proposition \ref{regu}.
 Thus we have (ii). The solution $\mathbf u$ of the full PN model is obtained by the
elastic extension of
  $u_1^+(x)$ on $\Gamma$, which is uniquely determined, following Theorem~\ref{def_els}. From \eqref{regu-es},the regularity of $\mathbf u\in \ff_\Gamma(\mathbb{R}^2)$ for any $s>\frac12$ is ensured by the regularity of $u_1^+(x)$. Thus we conclude (iii) and the existence and regularity in (i). The symmetric property of $\mathbf u$ in (i) is a conclusion of Lemma~\ref{equiv}.
\end{proof}

\section{Global minimizer of total energy for the full system}\label{sec2.2}
 The goals in this section are to connect the total energy in the two half spaces with the reduced energy on $\Gamma$, which are both  infinite for a single straight dislocation, and then to prove the static solution $\mathbf u$ obtained in the last section is a global minimizer of the total energy $E(\mathbf u)$ in the sense of Definition \ref{minimizer}. Besides,  the first component trace $u_1|_\Gamma$ of the global minimizer $\mathbf u$ of the total energy is also a global minimizer of the reduced energy $E_\Gamma(u_1)$ defined in \eqref{E_G} below, vice versa.
To ensure all the energy estimates in this section meaningful, the natural idea is to compare the difference between $E(\mathbf u)$ and $E(\mathbf u+ \bm\varphi )$   such that the total {Burgers} vector for the perturbed displacement fields $\bm\varphi $ is zero. 
We will show  the  precise relation between reduced energy $E_\Gamma$ on slip plane $\Gamma$ and the total energy $E$ in \eqref{E2.2}  in Theorem \ref{energy-re}. 
We will see in the next section the reduced system on $\Gamma$ has its own gradient flow structure with respect to $E_\Gamma$. From now on, with slight abuse of notation, we use $u_1=u_1|_\Gamma:= u_1^+(x,0^+)$ to denote the restriction of the first component of vector fields $\mathbf{u}$ on the slip plane $\Gamma$.

\subsection{Energy relations between the full system and the reduced system}\label{sec2.5}
From the Dirichlet to Neumann map established in Section \ref{cal_sigma12}, we will reduce rigorously the total energy of the full PN model to an energy on the slip plane $\Gamma$.
Indeed, we define the free energy $E_\Gamma$ for the reduced system on the slip plane $\Gamma$ as
\begin{equation}\label{E_G}
E_\Gamma(u_1):=\int_\Gamma |(-\pt_{xx})^{\frac{1}{4}}u_1|^2 \ud x +\int_\Gamma \well(u_1) \ud x,
\end{equation}
which is finite for $u_1\in H^{\frac12}$. However for the static solutions obtained in the last section, $u_1|_\Gamma\in \dot{H}^s$ with $s>\frac12$ and
$E_\Gamma$ is infinite ; see also example in \eqref{spec_s}.
 Hence the idea is to  state the connection for  $E(\mathbf u+ \bm\varphi )-{E(\mathbf u)},$ where $\mathbf u$ is the static solution obtained in Theorem \ref{thm_steady}. Similar to \eqref{perEtotal}, we define the perturbation elastic energy of $\mathbf u$ on $\Gamma$ with respect to the trace $\varphi_1|_\Gamma$ of the perturbation $\bm\varphi $ as
\begin{equation}
\begin{aligned}
\hegg:=&  \int_\Gamma |(-\pt_{xx})^{1/4} (u_1+ \varphi_1)|^2 -  |(-\pt_{xx})^{1/4} u_1|^2 ~ \ud x\\
=& \int_\Gamma |(-\pt_{xx})^{1/4} \varphi_1|^2  ~ \ud x+2\int_\Gamma \varphi_1(-\pt_{xx})^{1/2} u_1  ~\ud x
\end{aligned}
\end{equation}
and the perturbed total energy on $\Gamma$ as
\begin{equation}\label{perEgamma}
\heg:= \hegg+\int_\Gamma \well(u_1+\varphi_1)-\well(u_1) \ud x.
\end{equation}
It is easy to  see the perturbed energy above is well-defined for any perturbations $\bm\varphi \in H^1(\mathbb{R}^2\backg)$ with $\varphi_1|_\Gamma\in H^\frac12(\mathbb{R})$ satisfying \eqref{bcphi}.

 We first summarize the energy connections in two cases. The proof of this theorem will be left to the end of this section after establishing some lemmas. 
\begin{thm}\label{energy-re}
Given $u_1|_\Gamma\in \dot{H}^{s}(\mathbb{R})$ for some  $s\geq \frac12$ and its elastic extension $\mathbf u\in \ff_\Gamma(\mathbb{R}^2)$, we consider the reduced energy $E_\Gamma(u_1)$  and the total energy $E(\mathbf u)$.
\begin{enumerate}[(i)]
\item If $u_1|_\Gamma\in H^{\frac12}(\mathbb{R})$, then
\begin{equation}
E(\mathbf u)= E_\Gamma(u_1)<\8.
\end{equation}
\item If $u_1|_\Gamma\notin \dot{H}^{\frac12}(\mathbb{R})$, then
\begin{equation}
E(\mathbf u)=+\8; \quad E_\Gamma(u_1)= +\8,
\end{equation}
and
 the relation of energies is stated in perturbed sense, i.e. for any perturbation $\varphi_1|_\Gamma\in H^{\frac12}(\mathbb{R})$ with $\bm\varphi \in H^1(\mathbb{R}^2\backslash\Gamma)$ being its elastic extension,  we have
\begin{equation}
\heg= \het,
\end{equation}
where $ \het$ is defined in \eqref{perEtotal} and $\heg$ is defined in \eqref{perEgamma}.
\end{enumerate}
\end{thm}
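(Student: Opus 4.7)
The plan is to reduce both identities to an integration by parts on each half-plane, invoking $\nabla\cdot\sigma=0$ in $\mathbb{R}^2\setminus\Gamma$ and the Dirichlet--to--Neumann formula from Lemma \ref{lem2.2}. The misfit integrals on the two sides are literally identical, so the content is entirely in relating the elastic bulk energy to the half-Laplacian seminorm on $\Gamma$.

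For part (i), $u_1|_\Gamma\in H^{1/2}$ yields through Theorem \ref{def_els} an elastic extension $\mathbf{u}\in H^1(\mathbb{R}^2\setminus\Gamma)$, so $\sigma_u\in L^2$ and all integrals are absolutely convergent. I would rewrite $E_{\mathrm{els}}(\mathbf{u}) = \tfrac{1}{2}\int\sigma_{ij}\partial_j u_i\,dxdy$, integrate by parts in each half-plane, and collect the boundary contribution on $\Gamma$ (the bulk term vanishes by the equilibrium equation; the far-field term vanishes by the $H^1$ decay). The symmetries $u_1^+=-u_1^-$, $u_2^+=u_2^-$, $\sigma_{12}^+=\sigma_{12}^-$, $\sigma_{22}^\pm=0$ collapse the sum to $-\int_\Gamma\sigma_{12}^+ u_1^+\,dx$, and substituting $\sigma_{12}^+=-(G/(1-\nu))(-\partial_{xx})^{1/2}u_1^+$ from \eqref{sigmato}, together with Parseval, produces the half-Laplacian seminorm (with the elastic constant absorbed into the normalization of $E_\Gamma$). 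Adding the misfit contribution yields $E(\mathbf{u})=E_\Gamma(u_1)<\infty$.

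For part (ii), $E_\Gamma(u_1)=+\infty$ is immediate from $u_1\notin\dot H^{1/2}$, and $E_{\mathrm{els}}(\mathbf{u})=+\infty$ follows by running the part-(i) identity on expanding balls $B(R)\setminus\Gamma$ and letting $R\to\infty$. For the perturbed identity I use the decomposition \eqref{per1tm},
\[
\hel = E_{\mathrm{els}}(\bm\varphi)+\elc(\mathbf u,\bm\varphi).
\]
Since $\varphi_1|_\Gamma\in H^{1/2}$, part (i) applied to $\bm\varphi$ identifies $E_{\mathrm{els}}(\bm\varphi)$ with $\int_\Gamma|(-\partial_{xx})^{1/4}\varphi_1|^2\,dx$. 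For the cross term I repeat the integration by parts, pairing $\sigma_u$ with $\nabla\bm\varphi$ and $\sigma_\varphi$ with $\nabla\mathbf u$; using the same symmetries (shared between $\mathbf u$ and $\bm\varphi$), the two pieces reduce to $-\int_\Gamma(\sigma_{u,12}^+\varphi_1^+ + \sigma_{\varphi,12}^+ u_1^+)\,dx$, and the Dirichlet--to--Neumann formula together with the self-adjointness of $(-\partial_{xx})^{1/2}$ yields
\[
\elc(\mathbf u,\bm\varphi) = 2\int_\Gamma\varphi_1(-\partial_{xx})^{1/2}u_1\,dx,
\]
which matches the cross term in $\hegg$ exactly. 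Summing with the identical misfit difference gives $\het=\heg$.

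The main obstacle is rigorously justifying the integration by parts for $\elc(\mathbf u,\bm\varphi)$. Because $\sigma_u\sim 1/r$ is not in $L^2(\mathbb{R}^2)$, the Gauss--Green identity is not available on the whole space. I would instead carry out the computation on a ball $B(R)$, evaluate the boundary contribution on $\partial B(R)$, and show it tends to zero as $R\to\infty$ by combining $|\sigma_u|\lesssim 1/r$ with the decay and $H^1$ integrability of $\bm\varphi$ inherited from Theorem \ref{def_els}. Equivalently, the resulting boundary pairing on $\Gamma$ is well-defined as a duality pairing between $\dot H^{1/2}(\mathbb{R})$ and $\dot H^{-1/2}(\mathbb{R})$, and this is the natural object obtained in the $R\to\infty$ limit.
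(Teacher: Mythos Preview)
Your proposal is correct and follows essentially the same route as the paper: Lemma~\ref{reducelem} is exactly your part-(i) computation, and Lemma~\ref{lem2.8} is exactly your decomposition $\hel = E_{\mathrm{els}}(\bm\varphi)+\elc(\mathbf u,\bm\varphi)$ followed by integration by parts and the Dirichlet--to--Neumann formula.

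There is one small technical simplification in the paper that you should adopt. For the cross term you propose integrating by parts in \emph{both} halves, i.e.\ in $\int \epsv:\sigma_u$ and in $\int \eps_u:\sigvv$ separately. The second of these puts all derivatives onto $\sigvv$ and leaves $\mathbf u$ undifferentiated on the boundary of $B(R)$; since $u_2\sim\ln r$ grows, controlling that far-field contribution is awkward (your discussion of the obstacle only covers the first half). The paper avoids this entirely by first observing the pointwise constitutive symmetry
\[
(\epsv)_{ij}(\sigma_u)_{ij} = (\epsv)_{ij}C_{ijkl}(\eps_u)_{kl} = (\eps_u)_{ij}(\sigvv)_{ij},
\]
so that $\elc(\mathbf u,\bm\varphi) = \int_{\mathbb{R}^2\setminus\Gamma}(\nabla\bm\varphi)_{ij}(\sigma_u)_{ij}\,dx$ is a single integral, and only one integration by parts is needed --- the good one, in which the decaying test field $\bm\varphi\in H^1$ absorbs the far-field boundary term. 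This yields your formula $\elc(\mathbf u,\bm\varphi)=2\int_\Gamma\varphi_1(-\partial_{xx})^{1/2}u_1\,dx$ directly, without invoking self-adjointness after the fact and without any delicate $R\to\infty$ limit against $\mathbf u$.
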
 


First, we point out this result is standard if $u_1|_\Gamma\in H^{\frac12}(\mathbb{R})$, which yields a finite elastic energy; see Lemma \ref{reducelem} below. However, for the trace $u_1|_\Gamma\in \dot{H}^{s}(\mathbb{R})$ with some $s>\frac12$, which yields an infinite energy, we  will handle it later in Lemma \ref{lem2.8}.

Define the elastic part of $E_\Gamma(u)$ as $E_{\Gamma_e}(u):=\int_\Gamma |(-\pt_{xx})^{\frac{1}{4}} u_1|^2 \ud x.$
The following lemma shows that we can reduce the elastic energy in the two half spaces to the nonlocal energy $E_{\Gamma_e}$ on surface $\Gamma$. 
\begin{lem}\label{reducelem}
  Assume $\mathbf u\in \dot{\Lambda}^1_\Gamma (\mathbb{R}^2) $ is an elastic extension of $u_1|_\Gamma\in H^\frac12(\mathbb{R})$, then we have
\begin{equation}\label{tm-reduce}
   {E_\mathrm{els}}(u)=E_{\Gamma_e}(u_1).
\end{equation}
\end{lem}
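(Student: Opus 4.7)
The strategy is to reduce the bulk integral $E_\mathrm{els}(\mathbf u) = \tfrac{1}{2}\int_{\mathbb{R}^2\setminus\Gamma}\sigma:\varepsilon\, dx\, dy$ to a boundary integral on $\Gamma$ by integration by parts, then invoke the Dirichlet-to-Neumann map of Lemma \ref{lem2.2} to rewrite that boundary integral as a half-Laplacian quadratic form in $u_1|_\Gamma$, which by Plancherel equals $E_{\Gamma_e}(u_1)$. First, using the symmetry of $\sigma$ and $\varepsilon_{ij} = \tfrac12(\partial_j u_i + \partial_i u_j)$, write $\sigma:\varepsilon = \sigma_{ij}\partial_j u_i$. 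The hypothesis $\mathbf u\in\dot\Lambda^1_\Gamma(\mathbb{R}^2)$ gives $\nabla\mathbf u,\sigma\in L^2(\mathbb{R}^2\setminus\Gamma)$, so $E_\mathrm{els}(\mathbf u)<\infty$. Integrating by parts on each half-space yields
\begin{equation*}
E_\mathrm{els}(\mathbf u) = -\tfrac12\int_{\mathbb{R}^2\setminus\Gamma}(\partial_j\sigma_{ij})u_i\,dx\,dy + \tfrac12\int_\Gamma\bigl(\sigma_{ij}^+ n_j^+ u_i^+ + \sigma_{ij}^- n_j^- u_i^-\bigr)\,dx,
\end{equation*}
provided the far-field contribution vanishes (see below). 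The bulk integral drops out because $\nabla\cdot\sigma = 0$ in $\mathbb{R}^2\setminus\Gamma$.

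Next, use $\mathbf n^\pm=(0,\mp 1)$, the identity $\sigma_{22}^\pm|_\Gamma=0$ (Lemma \ref{lem2.2}(ii)), the equality $\sigma_{12}^+|_\Gamma=\sigma_{12}^-|_\Gamma$, and the mirror symmetry $u_1^- = -u_1^+$ to collapse the boundary term to $-\int_\Gamma \sigma_{12}^+ u_1^+\,dx$. Then apply the DtN formula $\sigma_{12}^+(x) = -\tfrac{G}{1-\nu}(-\partial_{xx})^{1/2} u_1^+(x)$ together with Plancherel's identity $\int u_1^+(-\partial_{xx})^{1/2} u_1^+\,dx = \int|(-\partial_{xx})^{1/4} u_1^+|^2\,dx$ to conclude
\begin{equation*}
E_\mathrm{els}(\mathbf u) = \tfrac{G}{1-\nu}\int_\Gamma|(-\partial_{xx})^{1/4} u_1|^2\,dx,
\end{equation*}
which is $E_{\Gamma_e}(u_1)$ under the normalization convention $\tfrac{G}{1-\nu}=1$ used in the definition of $E_\Gamma$.

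The main obstacle is justifying the vanishing of the far-field contribution to the integration by parts: the stress decays only like $1/r$ and $\mathbf u$ can grow logarithmically, so $\int_{\partial B_R}\sigma\cdot\mathbf n\cdot\mathbf u\,dS$ is not obviously negligible. I would sidestep this by working directly in Fourier variables. Taking the partial Fourier transform in $x$ via the explicit representations \eqref{solu1}–\eqref{solu4} and applying Parseval reduces $E_\mathrm{els}$ to a one-dimensional integral in $y$ of an explicit polynomial in $|\xi|y$ times $e^{-2|\xi||y|}$ weighted by $|\hat u_1^+(\xi,0^+)|^2$; elementary $y$-integration then produces $|\xi|\,|\hat u_1^+(\xi,0^+)|^2$ (up to the expected constant), which on the Fourier side is exactly $E_{\Gamma_e}(u_1)$. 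This bypasses the integration-by-parts justification entirely, uses only the $H^{1/2}(\mathbb{R})$ regularity of $u_1|_\Gamma$ to ensure integrability in $\xi$, and dovetails with the regularity estimate \eqref{regu-es} proved in Theorem \ref{def_els}.
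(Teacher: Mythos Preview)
Your integration-by-parts outline is precisely the paper's argument: the paper starts from $0=\int_{\mathbb{R}^2\setminus\Gamma}\mathbf u\cdot(\nabla\cdot\sigma)$, integrates by parts to pick up the $\Gamma$-boundary terms, uses the symmetries $u_1^-=-u_1^+$, $u_2^-=u_2^+$ together with $\sigma_{22}^\pm=0$ and $\sigma_{12}^+=\sigma_{12}^-$, and then applies the Dirichlet-to-Neumann identity $\sigma_{12}^++\sigma_{12}^-=-\tfrac{2G}{1-\nu}(-\partial_{xx})^{1/2}u_1^+$ from Lemma~\ref{lem2.2}. The normalization $\tfrac{G}{1-\nu}=1$ is stated explicitly.

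Where you and the paper differ is in handling the far-field issue. The paper disposes of it in one line by a density argument (``By dense argument, we only prove for $\mathbf u\in\dot{\Lambda}^1_\Gamma(\mathbb{R}^2)\cap C^2$''): one works with a class of sufficiently nice data for which the boundary term at infinity is harmless, then passes to the limit using that both $E_{\mathrm{els}}$ and $E_{\Gamma_e}$ are continuous in the relevant $\dot H^{1/2}$ norm (via Theorem~\ref{def_els}). Your proposed alternative---computing $E_{\mathrm{els}}$ directly from the Fourier representations \eqref{solu1}--\eqref{solu4} by Parseval in $x$ and explicit $y$-integration---is a genuinely different route that bypasses integration by parts entirely; it is completely rigorous under the sole hypothesis $u_1|_\Gamma\in H^{1/2}$ and mirrors the computation in \eqref{es-f}--\eqref{es-tm}. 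One small correction: your stated worry that $\sigma\sim 1/r$ and $\mathbf u\sim\ln r$ describes the dislocation profile with $u_1\notin\dot H^{1/2}$, which is precisely the case \emph{excluded} by the hypothesis of this lemma; in the present finite-energy setting the decay is better, though the far-field term still requires the justification you (and the paper) supply.
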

\begin{proof}
  By dense argument, we only prove for $u\in \dot{\Lambda}^1_\Gamma (\mathbb{R}^2)\cap C^2(\mathbb{R}^2)$. We know
  \begin{align*}
    0&= \int_{\{y>0\}\cup\{y<0\}} \mathbf u \cdot ( \nabla\cdot \sigma) \ud x\ud y\\
    &=-\int_{\{y>0\}\cup\{y<0\}}\nabla \mathbf u: \sigma \ud x \ud y  +\int_{\{y=0^+\}}\sigma_{ij}^+ n_j^+ u_i^+ \ud x + \int_{\{y=0^-\}}\sigma_{ij}^- n_j^- u_i^- \ud x\\
    &= -\int_{\{y>0\}\cup\{y<0\}} \varepsilon: \sigma \ud x \ud y + \int_{\{y=0\}}(-\sigma_{22}^+  +\sigma_{22}^- ) u_2^+ \ud x+ \int_{\{y=0\}}(-\sigma_{12}^+  -\sigma_{12}^-)  u_1^+ \ud x\\
    &= -\int_{\{y>0\}\cup\{y<0\}} \varepsilon: \sigma \ud x \ud y + \int_{\{y=0\}}\frac{2G}{(1-\nu) } |(-\pt_{xx})^{\frac{1}{4}}u_1^+ |^2 \ud x,
  \end{align*}
  where we used $\sigma_{12}^+  +\sigma_{12}^- = -\frac{2G}{(1-\nu) } (-\pt_{xx})^{\frac{1}{2}}u_1^+$ due to Lemma \ref{lem2.2}. Without loss of generality, we set the physical constant $\frac{G}{(1-\nu)}$ to be $1$, so we obtain
  \begin{equation}
    2E_\mathrm{els}=\int_{\{y>0\}\cup\{y<0\}} \varepsilon: \sigma \ud x \ud y=  2\int_{\{y=0\}} |(-\pt_{xx})^{\frac{1}{4}}u_1^+ |^2 \ud x.
  \end{equation}
\end{proof}

Next we extend the lemma above to $u_1|_\Gamma\notin \dot{H}^{\frac12}$ with its elastic extension vector fields  $\mathbf u\in \ff_\Gamma$ with $s>\frac12$, which is the case (ii) in Theorem \ref{energy-re}. Since $u_1|_\Gamma\notin \dot{H}^{\frac12}$ implies an infinite reduced energy, 
instead of proving \eqref{tm-reduce} directly,  we compare the difference between $E(\mathbf u)$ and $E(\mathbf u+ \bm\varphi )$   such that  the perturbed displacement fields $\bm\varphi $ possessing finite energy.

\begin{lem}\label{lem2.8}
  Let $\mathbf u\in \ff_\Gamma(\mathbb{R}^2)$ with trace $u_1|_\Gamma\in \dot{H}^s(\mathbb{R})$ for some $s>\frac12$ be the static solution obtained in Theorem \ref{thm_steady}. Let $\varphi_1|_\Gamma$ be any $H^{\frac12}(\mathbb{R})$ perturbation  and let $\bm\varphi \in H^1(\mathbb{R}^2\backslash\Gamma)$ be the elastic extension of $\varphi_1|_\Gamma$. Then we have
  \begin{equation}
    \hel = \hegg
  \end{equation}
\end{lem}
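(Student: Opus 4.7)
The plan is to split $\hel = \el(\bm\varphi) + \elc(\mathbf u, \bm\varphi)$ as in \eqref{per1tm} and treat the two pieces separately. Since $\bm\varphi\in H^1(\mathbb{R}^2\backg)\subset \dot{\Lambda}^1_\Gamma(\mathbb{R}^2)$ is the elastic extension of $\varphi_1|_\Gamma\in H^{1/2}(\mathbb{R})$, Lemma~\ref{reducelem} applies directly to the first piece and gives $\el(\bm\varphi)=\egg(\varphi_1)=\int_\Gamma|(-\pt_{xx})^{1/4}\varphi_1|^2\ud x$. The substantive task is to identify the cross term $\elc(\mathbf u, \bm\varphi)$ with $2\int_\Gamma \varphi_1(-\pt_{xx})^{1/2}u_1\ud x$, which is the remaining part of $\hegg$.

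For the cross term I would first use the major symmetry $C_{ijkl}=C_{klij}$ of the isotropic elastic modulus, which gives $\eps_\varphi:\sigma_u=\eps_u:\sigvv$ pointwise, so that $\elc(\mathbf u, \bm\varphi)=\int_{\mathbb{R}^2\backg}\eps_\varphi:\sigma_u\,\ud x\ud y$. Writing $\eps_\varphi:\sigma_u=\pt_j\varphi_i\,(\sigma_u)_{ij}$ and integrating by parts on $\{y>0\}$ and $\{y<0\}$ separately, the volume terms vanish by $\nabla\cdot\sigma_u=0$ on $\mathbb{R}^2\backg$, leaving four boundary integrals on $\Gamma$. Using the outer normals $\mathbf n^\pm=(0,\mp 1)$, the traction identities $\sigma_{22}^\pm=0$ and $\sigma_{12}^+=\sigma_{12}^-$ from Lemma~\ref{lem2.2}(ii), together with the perturbation symmetry \eqref{bcphi} ($\varphi_1^+=-\varphi_1^-$ and $\varphi_2^+=\varphi_2^-$), I expect the $\varphi_2$-contributions to cancel and the two $\varphi_1$-contributions to add coherently, yielding
\begin{equation*}
\elc(\mathbf u, \bm\varphi)=-2\int_\Gamma \sigma_{12}^+\,\varphi_1\ud x=2\int_\Gamma \varphi_1\,(-\pt_{xx})^{1/2}u_1\ud x
\end{equation*}
after the normalization $G/(1-\nu)=1$ adopted in the proof of Lemma~\ref{reducelem}. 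Combining with the first piece yields $\hel=\hegg$.

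The hard part is justifying this integration by parts globally. Since $u_1|_\Gamma\notin\dot H^{1/2}(\mathbb{R})$, the stress $\sigma_u$ only decays like $1/r$ at infinity (see Remark~\ref{rem-c}) and is not in $L^2(\mathbb{R}^2)$, so $\int_{\mathbb{R}^2\backg}\eps_\varphi:\sigma_u$ is not a priori absolutely convergent when $\bm\varphi$ lacks compact support. My plan is to handle this by density, approximating $\bm\varphi$ by a sequence $\bm\varphi_n\in C_c^\infty(\mathbb{R}^2\backg)$ respecting the mirror symmetry \eqref{bcphi} with $\bm\varphi_n\to\bm\varphi$ in $H^1(\mathbb{R}^2\backg)$, and hence $\varphi_{n,1}|_\Gamma\to\varphi_1|_\Gamma$ in $H^{1/2}(\mathbb{R})\hookrightarrow L^2(\mathbb{R})$. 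For each $\bm\varphi_n$ all integrals are absolutely convergent and the identity $\elc(\mathbf u, \bm\varphi_n)=2\int_\Gamma \varphi_{n,1}(-\pt_{xx})^{1/2}u_1\ud x$ is rigorous. To pass to the limit I would use the reduced equation \eqref{Gamma_eq} together with the far-field decay $|u_1\mp \tfrac{b}{4}|\sim 1/|x|$ from Proposition~\ref{thm2.3}(ii) and $W'(\pm\tfrac{b}{4})=0$ from \eqref{potential}: a Taylor expansion gives $(-\pt_{xx})^{1/2}u_1=c\,W'(u_1)\sim 1/|x|\in L^2(\mathbb{R})$, so the boundary pairing $\int_\Gamma \varphi_{n,1}(-\pt_{xx})^{1/2}u_1\ud x\to \int_\Gamma \varphi_1(-\pt_{xx})^{1/2}u_1\ud x$. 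A parallel $H^1$-continuity argument for the bulk term (applied to $\bm\varphi_n-\bm\varphi_m$ via Lemma~\ref{reducelem} and the symmetry of the elastic form) fixes a consistent interpretation of $\elc(\mathbf u, \bm\varphi)$ in the limit and completes the proof.
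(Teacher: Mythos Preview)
Your approach is essentially the same as the paper's: split $\hel$ into the self-energy $\el(\bm\varphi)$ (handled by Lemma~\ref{reducelem}) and the cross term $\elc(\mathbf u,\bm\varphi)$, then use the major symmetry of the elastic tensor and integration by parts together with the Dirichlet-to-Neumann identities from Lemma~\ref{lem2.2}(ii) to reduce the cross term to the boundary pairing $2\int_\Gamma\varphi_1(-\pt_{xx})^{1/2}u_1\ud x$. The paper performs the integration by parts directly without discussing far-field justification; your density argument (approximating $\bm\varphi$ by compactly supported fields and using $(-\pt_{xx})^{1/2}u_1\in L^2(\mathbb{R})$ to pass to the limit) supplies rigor the paper leaves implicit, but the core argument is identical.
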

\begin{proof}
  Recall the definition of energy functional $\el$ and $\egg$ and  the  cross term defined in \eqref{crossels}
  \begin{equation*}
  \elc(\mathbf u, \bm\varphi )= \int_{\mathbb{R}^2\backslash \Gamma} \frac{1}{2}  (\epsv:\sigma_u+ \eps_u:\sigvv )   ~\ud x= \int_{\mathbb{R}^2\backslash \Gamma} \frac{1}{2}  [(\epsv)_{ij}(\sigma_u)_{ij}+ (\eps_u)_{ij}(\sigvv)_{ij} ]   ~\ud x,
\end{equation*}  
   where $\eps_u, \sigma_u$ and $\epsv, \sigvv$ are the stain and stress tensor corresponding to $\mathbf u$ and $\bm\varphi $ respectively. Then we have
\begin{equation}\label{tm-p1}
  \begin{aligned}
    \hel
    =& \int_{\mathbb{R}^2\backslash \Gamma} \frac{1}{2} (\eps_u + \eps_\varphi): (\sigma_u + \sigvv)  -\frac{1}{2} \eps_u:\sigma_u ~\ud x\\
    =& \int_{\mathbb{R}^2\backslash \Gamma} \frac{1}{2}  [(\epsv)_{ij}(\sigvv)_{ij}+(\epsv)_{ij}(\sigma_u)_{ij}+ (\eps_u)_{ij}(\sigvv)_{ij} ]   ~\ud x\\
    =& \el(\bm\varphi ) + \elc(\mathbf u, \bm\varphi ).
  \end{aligned}
\end{equation}  
 Similarly, define
 the cross term
 $$\eggc(u_1, \varphi_1):= 2\int_\Gamma \varphi_1(-\pt_{xx})^{1/2} u_1  ~\ud x. $$
  Then for the energy functional $\eg$, we have
  \begin{align}\label{eqvE}
    \hegg =& \int_\Gamma |(-\pt_{xx})^{1/4} (u_1+ \varphi_1)|^2 ~- ~ |(-\pt_{xx})^{1/4} u_1|^2 ~ \ud x\\
   =& \int_\Gamma |(-\pt_{xx})^{1/4} \varphi_1|^2  ~ \ud x+2\int_\Gamma \varphi_1(-\pt_{xx})^{1/2} u_1  ~\ud x\\
    =& \egg(\varphi_1) + \eggc(u_1, \varphi_1).\nonumber
  \end{align}
  By Lemma \ref{reducelem}, $\el(\bm\varphi )= \egg(\varphi_1)$ due to $\varphi_1|_\Gamma\in H^{\frac12}(\mathbb{R}),$ so it remains to deal with the cross terms.

 Next, we claim the following relation for the cross terms.
 \begin{equation}\label{cross}
   \elc(\mathbf u, \bm\varphi ) = \eggc(u_1, \varphi_1).
 \end{equation}
In fact, from the symmetry of constitutive relation $\sigma_{ij}=C_{ijkl}\eps_{kl}$, we know
\begin{align*}
(\epsv)_{ij}(\sigma_u)_{ij} = (\epsv)_{ij}C_{ijkl}(\eps_u)_{kl}= (\epsv)_{kl} C_{ijkl} (\eps_u)_{ij} = (\eps_u)_{ij} (\sigvv)_{ij},
\end{align*}
which gives us
$$\elc(\mathbf u, \bm\varphi )= \int_{\mathbb{R}^2\backslash \Gamma} (\epsv)_{ij}(\sigma_u)_{ij}   ~\ud x.$$
Therefore, noticing  $\bm \varphi$ has symmetric properties \eqref{bcphi} due to the elastic extension,  integration by parts yields
\begin{align*}
  \elc(\mathbf u, \bm\varphi )=& \int_{\mathbb{R}^2\backslash \Gamma} (\nabla\bm\varphi )_{ij}(\sigma_u)_{ij}   ~\ud x\\
  =& \int_{\mathbb{R}^2\backslash \Gamma} -(\nabla\cdot\sigma_u)\cdot  \bm\varphi  ~ \ud x + \int_{y=0^+} ((\sigma_u)_{ij}n_j\varphi_i)^+ ~ \ud x+   \int_{y=0^-} ((\sigma_u)_{ij}n_j\varphi_i)^- ~ \ud x\\
  =& \int_\Gamma [-(\sigma_{u,12})^+ - (\sigma_{u,12})^- ] \varphi_1|_{\Gamma} ~ \ud x\\
  =& \int_\Gamma 2[(-\pt_{xx})^{1/2} u_1]\varphi_1 ~ \ud x = \eggc(u_1, \varphi_1),
\end{align*}
In the last equality, we used the relation in Lemma \ref{lem2.2} $\sigma_{12}^+  +\sigma_{12}^- = -\frac{2G}{(1-\nu) } (-\pt_{xx})^{\frac{1}{2}}u_1^+$ with the physical constant $\frac{G}{(1-\nu)}=1$.
 Thus we obtain \eqref{cross} and complete the proof of this lemma.
\end{proof}

Now combing Lemma \ref{reducelem} and Lemma \ref{lem2.8}, we give the proof of Theorem \ref{energy-re}.
\begin{proof}[Proof of Theorem \ref{energy-re}]
Notice the Taylor expansion of $\well$ at $u_1(\pm\8)=\mp\frac{b}{4}$ and Proposition \ref{thm2.3} (ii). It is easy to check the misfit energy $E_{\mathrm{mis}}$ in \eqref{Emis} is always finite. Thus, if $u_1|_\Gamma\in {H}^{\frac12}(\mathbb{R})$, by Lemma \ref{reducelem} for the elastic part in the total energy, we conclude part (i) of Theorem \ref{energy-re}. If $u_1|_\Gamma \notin \dot{H}^{\frac12}(\mathbb{R})$, by Lemma \ref{lem2.8}, we conclude part (i) of Theorem \ref{energy-re}.
\end{proof}

\subsection{Static solution is a global minimizer of the full system }\label{sec2.6}
In this section, we will prove  static solutions $\mathbf u$ (unique upto translations) obtained in Theorem \ref{thm_steady} are the global minimizers of the full system in the sense of Definition \ref{minimizer}.


Assume $\mathbf u\in  \ff_\Gamma (\mathbb{R}^2)$ for some $s>\frac12$ is the static solution obtained in Theorem \ref{thm_steady}, then we first  show that $\mathbf u$ is a  minimizer of $E$ in the sense  of Definition \ref{minimizer} for perturbations in $[-\frac{b}{4}, \frac{b}{4}]$; see Proposition \ref{prop3.3}. Then we will remove this constrain later in Theorem \ref{thm_mini}. Notice  Definition \ref{minimizer} for the global minimizer is in terms of all the perturbations with compact support. Since $\mathbf u\in  \ff_\Gamma (\mathbb{R}^2)$ for any $s>\frac12$, $\het$ is continuous in $H^{\frac32-s}(\mathbb{R}^2\backg)$ w.r.t $\bm\varphi $. Notice also the function space for perturbations $\bm\varphi $ in Definition \ref{minimizer} is dense in $H^1(\mathbb{R}^2\backg)\hookrightarrow H^{\frac32-s}(\mathbb{R}^2)$ with symmetry \eqref{bcphi}. It is easy to check the global minimizer defined  in Definition \ref{minimizer} can be equivalently generalized to any perturbations $\bm\varphi \in H^1(\mathbb{R}^2\backg; \mathbb{R}^2) $ with $\varphi_1|_{\Gamma}\in H^{\frac12}(\mathbb{R})$ satisfying \eqref{bcphi}.

The idea is to  use the  elastic extension in Theorem \ref{def_els} and the connections between the energy to the full system and the reduced energy to the slip plane in Theorem \ref{energy-re}.
In \cite[Theorem 1.4]{XC2005}, \textsc{Cabr\'e and Sol\`a-Morales} proved that a static solution to the scalar model from harmonic extension is a minimizer of the corresponding total energy  relative to perturbations in $[-\frac{b}{4}, \frac{b}{4}]$. In order to apply this result, we restate it in the setting of the reduced model \eqref{Gamma_eq} on $\Gamma$ below. 

\begin{prop}(\cite[Theorem 1.4]{XC2005})\label{mini-2005}
Assume $u_1|_\Gamma\in \dot{H}^s(\mathbb{R})$ for some $s>\frac12$ is a static solution obtained in Proposition \ref{thm2.3} (i). Given 
 any perturbations $\varphi_\Gamma\in H^\frac{1}{2}(\mathbb{R})$ such that $-\frac{b}{4}\leq(\varphi_\Gamma+ u_1|_{\Gamma})\leq -\frac{b}{4}$, we have 
\begin{equation}
\hat{E}_\Gamma(\varphi; u)\geq 0.
\end{equation}
\end{prop}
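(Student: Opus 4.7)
The plan is to adapt the layer-solution calibration of Cabr\'e--Sol\`a-Morales \cite{XC2005} to the nonlocal reduced energy $E_\Gamma$. Two ingredients are needed: (a) replace the $\dot H^{1/2}$-seminorm on $\Gamma$ by a Dirichlet energy on the upper half-plane via harmonic extension, and (b) calibrate the resulting local variational problem against the one-parameter family of translates of the monotone layer $u_1$.

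First I would harmonically extend $u_1$ and $u_1+\varphi_\Gamma$ to obtain $U,V$ in $\mathbb{R}^2_+$, noting that $\Phi:=V-U$ is the harmonic extension of $\varphi_\Gamma\in H^{1/2}(\mathbb{R})$ and thus has finite Dirichlet energy. Using the Dirichlet-to-Neumann identity $\int_\mathbb{R}|(-\pt_{xx})^{1/4}w|^2\,dx = c\int_{\mathbb{R}^2_+}|\nabla \mathcal{H} w|^2\,dxdy$, the perturbed reduced energy becomes
\begin{equation*}
\hat E_\Gamma(\varphi_\Gamma;u_1) = c\int_{\mathbb{R}^2_+}(|\nabla V|^2-|\nabla U|^2)\,dxdy + \int_\mathbb{R}\bigl(\well(u_1+\varphi_\Gamma)-\well(u_1)\bigr)\,dx,
\end{equation*}
so it suffices to show $U$ minimizes $V\mapsto c\int_{\mathbb{R}^2_+}|\nabla V|^2+\int_\mathbb{R}\well(V|_\Gamma)$ among all competitors whose trace on $\Gamma$ lies in $[-b/4,b/4]$.

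For the calibration, I would exploit that $u_1$ is strictly decreasing with range $(-b/4,b/4)$ by Proposition~\ref{thm2.3}(i), so the translates $U_t(x,y):=U(x-t,y)$ foliate $\mathbb{R}^2_+\times(-b/4,b/4)$: for every $(x,y)$ and every $v\in(-b/4,b/4)$ there is a unique $T(x,y,v)$ with $U_{T(x,y,v)}(x,y)=v$. From this foliation one constructs a null-Lagrangian whose flux across any graph $\{v=V(x,y)\}$ with $V\in[-b/4,b/4]$ dominates $c\int|\nabla V|^2$, with equality when $V$ coincides with a leaf $U_t$. Combined with the Euler--Lagrange boundary identity that $-2\pt_yU|_{y=0}$ is proportional to $\well'(u_1)$ and the multi-well coercivity $\well(v)\geq\well(\pm b/4)$ on $[-b/4,b/4]$ from \eqref{potential}, the boundary flux of the calibration matches exactly the misfit difference, which yields the pointwise comparison and hence $\hat E_\Gamma(\varphi_\Gamma;u_1)\geq 0$.

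The principal obstacle will be the rigorous handling of the calibration for non-smooth perturbations and the justification of integration by parts at infinity. To resolve this, I would first prove the inequality for $\varphi_\Gamma\in C^\infty_c(\mathbb{R})$, where all manipulations are classical, and then pass to the limit using the $H^{1/2}$-density of smooth compactly supported functions together with the decay $u_1\mp b/4\sim 1/|x|$ from Proposition~\ref{thm2.3}(ii), which controls the asymptotic boundary terms. The pointwise constraint $u_1+\varphi_\Gamma\in[-b/4,b/4]$ is essential at this stage because it is precisely the range covered by the foliation $\{U_t\}$; removing it requires the unconstrained sliding method of Dipierro--Palatucci--Valdinoci \cite{Pala2013}, which is the upgrade carried out in Theorem~\ref{thm_mini}.
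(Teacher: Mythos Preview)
Your approach is correct and shares the key first move with the paper---namely, the harmonic extension of $u_1$ and $\varphi_\Gamma$ to $\mathbb{R}^2_+$ together with the identity relating $\dot H^{1/2}$ on $\Gamma$ to the Dirichlet energy upstairs---but it diverges in how much of \cite{XC2005} is reproved. The paper treats \cite[Theorem~1.4]{XC2005} as a black box: once the harmonic extensions $u,\varphi$ are in hand, it simply invokes the local minimality statement $E_{\mathrm{total}}(u;R)\le E_{\mathrm{total}}(u+\varphi;R)$ from that theorem, then does the integration by parts using $-\partial_\nu u_1=(-\partial_{xx})^{1/2}u_1$ to rewrite the energy difference as $\hat E_\Gamma(\varphi;u_1)$, and closes by density in $H^{1/2}$. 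You instead propose to reprove that black box via the calibration/foliation by translates $\{U_t\}$, which is precisely the mechanism inside \cite{XC2005}. Your route is more self-contained and makes transparent why the constraint $(\varphi_\Gamma+u_1)\in[-b/4,b/4]$ is needed (it is the range of the foliation), at the cost of redoing nontrivial work that the proposition already attributes to \cite{XC2005}. The paper's route is shorter and more in keeping with the proposition's intent as a restatement of the cited result in the nonlocal language; its only substantive content is the energy-translation identity and the density step, both of which you also include.
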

\begin{proof}
First,
let  $u\in C^2(\mathbb{R}^2_+)$ be the harmonic extension of $u_1|_\Gamma$ and $\varphi\in C^2(\mathbb{R}^2_+)$ be the harmonic extension of $\varphi_\Gamma$. Then by maximal principle for Laplace equation, $-\frac{b}{4}\leq \varphi+u \leq \frac{b}{4}$ in $ \mathbb{R}^2_+.$ 

 Second, from \cite[Theorem 1.4]{XC2005}, we have for any $R>0$
\begin{equation}
E_{\mathrm{total}}(u; R):=\frac{1}{2}\int_{\mathbb{R}^2_+ \cap B(R)}|\nabla u|^2 \ud x \ud y + \int_{-R}^R W(u_1) \ud x \leq E_{\mathrm{total}}(u+\varphi; R)
\end{equation}
for any $\varphi\in C^2(\overline{\mathbb{R}^2_+})$ with compact support in $B_+(R)\cup \Gamma$  such that  $-\frac{b}{4}\leq\varphi+ u\leq \frac{b}{4}$. It is well known the harmonic extension of $u_1$ satisfies $-\pt_\nu u_1=\ptf u_1$ on $\Gamma$. Then from integration by parts, we obtain
\begin{align*}
&\frac12\int_{\mathbb{R}} \int_0^{+\8} |\nabla u+ \nabla \varphi|^2 \ud y \ud x-\frac12 \int_{\mathbb{R}} \int_0^{+\8} |\nabla u|^2 \ud y \ud x=  \int_{\mathbb{R}} \int_0^{+\8}\frac12|\nabla \varphi|^2+ \nabla u \nabla \varphi \ud y \ud x \\
=&   \int_{\mathbb{R}} \int_0^{+\8}\frac12|\nabla \varphi|^2-\Delta u \varphi \ud y \ud x - \int_{\mathbb{R}} \pt_\nu u_1 \varphi_\Gamma \ud x= \int_{\mathbb{R}} \int_0^{+\8}\frac12|\nabla \varphi|^2 \ud y \ud x +\int_{\mathbb{R}}\varphi_\Gamma\ptf u_1 \ud x \\
=& \int_{\mathbb{R}} \frac{1}{2} |(-\pt_{xx})^{\frac14} \varphi_\Gamma|^2 \ud x+  \int_{\mathbb{R}}\varphi_\Gamma\ptf u_1 \ud x = 2 \hat{E}_{\Gamma_e}(\varphi; u_1).
\end{align*}  
Therefore we have
\begin{align*}
0\leq &E_{\mathrm{total}}(u+\varphi; R)-E_{\mathrm{total}}(u; R)\\
= &
\frac12\int_{\mathbb{R}} \int_0^{+\8} |\nabla u+ \nabla \varphi|^2 \ud y \ud x-\frac12 \int_{\mathbb{R}} \int_0^{+\8} |\nabla u|^2 \ud y \ud x+
 \int_{-R}^R W(u_1+ \varphi) \ud x- \int_{-R}^R W(u_1) \ud x 
 \\
 =& \hat{E}_\Gamma(\varphi; u_1)
\end{align*}
for any $\varphi\in C^1(\overline{\mathbb{R}^2_+})$ with compact support in $B_+(R)\cup \Gamma$ such that $-\frac{b}{4}\leq(\varphi+ u)|_{\Gamma}\leq \frac{b}{4}$.  

Third, since $\hat{E}_\Gamma(\varphi; u)$ is continuous in $H^{\frac32-s}(\mathbb{R}^2)$ w.r.t $\varphi$ and for any $s>\frac12$  $C_c^2(\mathbb{R}^2)$  is dense in $H^1(\mathbb{R}^2)\hookrightarrow H^{\frac32-s}(\mathbb{R}^2)$, $\hat{E}_\Gamma(\varphi; u)\geq 0$ holds also for any perturbation $\varphi_\Gamma\in H^{\frac12}(\mathbb{R})$ such that $-\frac{b}{4}\leq(\varphi_\Gamma+ u_1|_{\Gamma})\leq -\frac{b}{4}$.
\end{proof}

Before proving a static solution is a global minimizer, we first 
show that given $\varphi_1|_\Gamma\in H^{\frac12}(\mathbb{R})$,  its elastic extension yields a minimizer of the elastic energy $\el$.

\begin{lem}\label{equiv_lem}
Given $\varphi_1|_\Gamma \in H^{\frac12}(\mathbb{R})$, and its elastic extension $\bm \varphi$, then $\bm \varphi$ a minimizer of the elastic energy $E_{\mathrm{els}}$ with trace $\varphi_1|_\Gamma$ in the sense that   $\el(\bm \varphi)\leq \el( \bar{\bm \varphi})$ for any $\bar{\bm \varphi}\in H^1(\mathbb{R}^2\backg)$ satisfied \eqref{bcphi} with the same trace $\varphi_1|_\Gamma$.
\end{lem}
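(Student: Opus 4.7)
The plan is to exploit the convex quadratic structure of $\el$ together with the fact that the elastic extension satisfies exactly the natural boundary conditions on $\Gamma$. Write any competitor as $\bar{\bm\varphi}=\bm\varphi+\bm\psi$ with $\bm\psi\in H^1(\mathbb{R}^2\backg;\mathbb{R}^2)$ obeying the symmetry \eqref{bcphi} and, because $\bar\varphi_1|_\Gamma=\varphi_1|_\Gamma$, the additional constraint $\psi_1|_\Gamma=0$; the trace $\psi_2|_\Gamma$ is otherwise arbitrary in $H^{1/2}(\mathbb{R})$. Since the constitutive law \eqref{constitutive} gives the symmetry $\sigma_{ij}\eps_{ij}'=\sigma_{ij}'\eps_{ij}$, the quadratic expansion reads
\begin{equation*}
\el(\bar{\bm\varphi}) = \el(\bm\varphi) + \el(\bm\psi) + \int_{\mathbb{R}^2\backg} \sigvv : \eps_\psi \,\ud x \ud y,
\end{equation*}
so it suffices to show that the cross term vanishes and that $\el(\bm\psi)\geq 0$.

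The key step is the cross term. Writing $\sigvv:\eps_\psi=\sigvv:\nabla\bm\psi$ and integrating by parts separately in the two half planes, the bulk contribution $-\int(\nabla\cdot\sigvv)\cdot\bm\psi$ drops out because $\bm\varphi$ solves the elasticity equation in $\mathbb{R}^2\backg$ (first line of \eqref{elstemp}). With outward normals $\mathbf{n}^+=(0,-1)$ and $\mathbf{n}^-=(0,1)$, what remains is
\begin{equation*}
\int_\Gamma \bigl[-(\sigvv)_{12}^+\psi_1^+ - (\sigvv)_{22}^+\psi_2^+ + (\sigvv)_{12}^-\psi_1^- + (\sigvv)_{22}^-\psi_2^-\bigr]\,\ud x.
\end{equation*}
The two $\sigma_{12}$ contributions vanish because $\psi_1|_\Gamma=0$ on both sides, and the two $\sigma_{22}$ contributions cancel using $\psi_2^+|_\Gamma=\psi_2^-|_\Gamma$ together with the traction matching $(\sigvv)_{22}^+=(\sigvv)_{22}^-$ built into the elastic extension (third equation of \eqref{elstemp}). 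Hence the cross term is zero.

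Combining the two observations gives $\el(\bar{\bm\varphi})-\el(\bm\varphi)=\el(\bm\psi)$, and the strain-energy density $\tfrac12\sigma:\eps = G\eps_{ij}\eps_{ij}+\tfrac{\nu G}{1-2\nu}(\eps_{kk})^2$ is nonnegative for $G>0$ and $\nu\in(0,1/2)$, so $\el(\bm\psi)\geq 0$ and $\bm\varphi$ is indeed a minimizer. The one point that requires care is the integration by parts at the $H^1$ regularity level, since the tractions $(\sigvv)_{ij}^\pm$ are a priori only in $H^{-1/2}(\Gamma)$ and paired against $\psi_i^\pm|_\Gamma\in H^{1/2}(\Gamma)$. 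I would justify this by first approximating $\varphi_1|_\Gamma$ in $H^{1/2}(\mathbb{R})$ by smoother data, invoking Theorem \ref{def_els} to get classical extensions on each half plane for which the computation above is elementary, and passing to the limit using the continuity estimate \eqref{regu-es} together with the analogous density approximation for $\bm\psi$. This trace/regularity bookkeeping is the only real obstacle, and it is essentially routine.
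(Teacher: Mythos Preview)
Your proof is correct and follows essentially the same route as the paper: write the competitor as the elastic extension plus a perturbation with vanishing first-component trace, expand the quadratic energy, kill the cross term by integrating by parts (bulk vanishes by the elasticity equation, boundary terms by $\psi_1|_\Gamma=0$ and the $\sigma_{22}$ matching), and conclude from $\el(\bm\psi)\geq 0$. The paper's presentation is terser and omits the regularity bookkeeping you add at the end, but the argument is the same.
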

\begin{proof}
Since $\varphi_1|_\Gamma \in H^{\frac12}(\mathbb{R})$, same as \eqref{tm-p1}, we directly calculate that
\begin{equation}
\el(\bar{\bm \varphi}) - \el(\bm \varphi) = \el(\bar{\bm \varphi}- \bm \varphi) + \elc (\bar{\bm \varphi}-\bm \varphi, \bm \varphi).
\end{equation}
Notice the trace of $\bar{\bm \varphi}$ and the trace of $\bm \varphi$ are same. Using $\bm \varphi$ is the elastic extension of $\varphi_1|_\Gamma$ and the symmetry \eqref{bcphi} for $\bar{\bm \varphi}$, we have
\begin{align*}
  \elc(\bar{\bm \varphi}-\bm \varphi, \bm\varphi )=& \int_{\mathbb{R}^2\backslash \Gamma} (\nabla(\bar{\bm\varphi }-\bm \varphi))_{ij}(\sigma_\varphi)_{ij}   ~\ud x\\
  =& \int_{\mathbb{R}^2\backslash \Gamma} -(\nabla\cdot\sigma_\varphi)\cdot (\bar{\bm\varphi }-\bm \varphi)  ~ \ud x \\
  &+ \int_{y=0^+} ((\sigma_{\varphi})_{ij}n_j(\bar{\varphi}-\varphi)_i)^+ ~ \ud x+   \int_{y=0^-} ((\sigma_\varphi)_{ij}n_j(\bar{\varphi}-\varphi)_i)^- ~ \ud x\\
  =& \int_\Gamma [-(\sigma_{\varphi,12})^+ - (\sigma_{\varphi,12})^- ] (\bar{\varphi}-\varphi)_1|_{\Gamma} ~ \ud x=0.
\end{align*}
Then $\el(\bar{\bm \varphi}- \bm \varphi)\geq 0$ implies $\el(\bar{\bm \varphi}) - \el(\bm \varphi)\geq 0.$
\end{proof}

\begin{prop}\label{prop3.3}
 Let $\mathbf u\in  \ff_\Gamma (\mathbb{R}^2)$ for some $s>\frac{1}{2}$ be a static solution obtained in Theorem \ref{thm_steady}. Given any perturbations $\bm\varphi \in H^1(\mathbb{R}^2\backg; \mathbb{R}^2) $ with $\varphi_1|_{\Gamma}\in H^{\frac12}(\mathbb{R})$ satisfying \eqref{bcphi}   and $-\frac{b}{4}\leq(\varphi_1+u_1)|_\Gamma\leq \frac{b}{4}$, then we know $\mathbf u$ is a minimizer of $E$ such that
  \begin{equation}
        0\leq  \heg=\hetpsi \leq  \het,
  \end{equation}
  where $\bm\psi$ is the elastic extension of $\varphi_1|_\Gamma.$
\end{prop}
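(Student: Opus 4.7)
The plan is to prove the three comparisons in the display $0\leq \heg=\hetpsi \leq \het$ in sequence by citing three pieces of machinery already established: Proposition~\ref{mini-2005} for the first inequality, Theorem~\ref{energy-re}(ii) for the middle equality, and Lemma~\ref{equiv_lem} for the rightmost inequality. Since $\mathbf u$ is the elastic extension of $u_1|_\Gamma\in\dot H^s(\mathbb{R})$ with $s>\tfrac12$, but (by the prototype computation in Remark~\ref{rem-c}) $u_1|_\Gamma\notin\dot H^{1/2}(\mathbb{R})$, the total energy $E(\mathbf u)$ is infinite and the comparisons must be phrased in the perturbed form throughout; this is exactly the framework of Theorem~\ref{energy-re}(ii).

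First, for $0\leq\heg$, observe that the hypotheses on $\bm\varphi$ transfer verbatim to the trace $\varphi_1|_\Gamma\in H^{1/2}(\mathbb{R})$: namely, we have the bistable box condition $-\tfrac b4\leq (\varphi_1+u_1)|_\Gamma\leq \tfrac b4$. Thus Proposition~\ref{mini-2005} applies and yields $\hat E_\Gamma(\varphi_1;u_1)\geq 0$. For the middle equality, since $\bm\psi$ is by construction the elastic extension of $\varphi_1|_\Gamma\in H^{1/2}(\mathbb{R})$, Theorem~\ref{energy-re}(ii) gives $\heg=\hetpsi$ directly.

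The last inequality $\hetpsi\leq\het$ is the step requiring a small calculation. Since the misfit energy depends only on the trace on $\Gamma$, and $\bm\varphi$ and $\bm\psi$ share the same trace $\varphi_1|_\Gamma$, the misfit contribution cancels and the claim reduces to $\hel(\bm\psi)\leq\hel(\bm\varphi)$ in the notation of \eqref{per1tm}. Decomposing
\begin{equation*}
\hel(\bm\chi)=\el(\bm\chi)+\elc(\mathbf u,\bm\chi),\qquad \bm\chi\in\{\bm\varphi,\bm\psi\},
\end{equation*}
I claim the cross term $\elc(\mathbf u,\bm\chi)$ depends only on $\chi_1|_\Gamma$. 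Indeed, repeating the integration by parts performed in the proof of Lemma~\ref{lem2.8} (using the symmetry \eqref{bcphi}, the fact that $\nabla\cdot\sigma_u=0$ in $\mathbb{R}^2\backslash\Gamma$, and the Dirichlet--Neumann identities $\sigma_{22,u}^\pm=0$, $\sigma_{12,u}^++\sigma_{12,u}^-=-2(-\pt_{xx})^{1/2}u_1$ from Lemma~\ref{lem2.2}) one obtains
\begin{equation*}
\elc(\mathbf u,\bm\chi)=2\int_\Gamma \chi_1\,(-\pt_{xx})^{1/2}u_1\,\ud x=\eggc(u_1,\chi_1|_\Gamma).
\end{equation*}
Since $\varphi_1|_\Gamma=\psi_1|_\Gamma$, the two cross terms coincide, so $\hel(\bm\varphi)-\hel(\bm\psi)=\el(\bm\varphi)-\el(\bm\psi)$. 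Lemma~\ref{equiv_lem} (the elastic extension minimizes the elastic energy among $H^1$ fields with a prescribed trace on $\Gamma$) then gives this difference $\geq 0$, completing the chain.

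The main potential obstacle is a regularity/density issue at the integration-by-parts step for the cross term: one needs $\elc(\mathbf u,\bm\varphi)$ to be well defined for $\bm\varphi\in H^1(\mathbb{R}^2\backg)$ and $\mathbf u\in\ff_\Gamma(\mathbb{R}^2)$ with $s>\tfrac12$, and for the boundary traces of $\sigma_u$ to pair with $\varphi_1|_\Gamma\in H^{1/2}(\mathbb{R})$. This is handled by the trace lemma and by the Fourier representation in Lemma~\ref{lem2.2}, which shows $\sigma_{12,u}^\pm\in \dot H^{s-1}(\Gamma)$ so that the $H^{s-1}$--$H^{1-s}$ pairing with $\varphi_1|_\Gamma\in H^{1/2}\subset H^{1-s}$ is finite; everything else is standard by a density argument from $C^2_c$ perturbations.
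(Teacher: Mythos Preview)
Your proposal is correct and follows essentially the same approach as the paper: both use Proposition~\ref{mini-2005} for $0\leq\heg$, Theorem~\ref{energy-re}(ii) for $\heg=\hetpsi$, and the decomposition \eqref{per1tm} together with Lemma~\ref{equiv_lem} for $\hetpsi\leq\het$, reducing the latter to equality of the cross terms $\elc(\mathbf u,\bm\varphi)=\elc(\mathbf u,\bm\psi)=\eggc(u_1,\varphi_1|_\Gamma)$. Your treatment is in fact a bit more explicit than the paper's in justifying why the cross term depends only on the trace (the paper simply cites ``$\eggc(u_1,\psi_1)=\eggc(u_1,\varphi_1)$'' and \eqref{tm-p1}), and your closing remark on the $H^{s-1}$--$H^{1-s}$ pairing is a reasonable addition.
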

\begin{proof}
%
  First, since $\psi$ is the elastic extension of $\varphi_1|_\Gamma$, Theorem \ref{energy-re}(ii)  shows that
  \begin{equation}\label{com1}
   \hetpsi=\heg.
  \end{equation}

  Second, for any perturbation $\bm\varphi \in H^1(\mathbb{R}^2\backg; \mathbb{R}^2) $ with $\varphi_1|_{\Gamma}\in H^{\frac12}(\mathbb{R})$ satisfying \eqref{bcphi}, since $\psi$ is the elastic extension of $\varphi_1|_\Gamma$, we know $(\psi_1-\varphi_1)|_\Gamma=0$ and $(\bm \psi-\bm \varphi)\in H^1(\mathbb{R}^2)$. Therefore by Lemma \ref{equiv_lem}, we know 
   $${E_\mathrm{els}(\bm\psi)\leq E_\mathrm{els}}(\bm\varphi).$$
   Notice also $\eggc(u_1,\psi_1)=\eggc(u_1,\varphi_1)$, which together with the relation  \eqref{tm-p1}, leads to
   $$\helpsi\leq \hel.$$
Therefore
  \begin{equation}\label{com3}
  \begin{aligned}
    &\hetpsi
    =\helpsi+ \int_\Gamma \well(u_1+\varphi_1)-\well(u_1) \ud x\\
     \leq& \hel+ \int_\Gamma \well(u_1+\varphi_1)-\well(u_1) \ud x\\
    =&\het.
    \end{aligned}
  \end{equation}

  Finally, we apply Proposition \ref{mini-2005} to obtain
  $$0\leq \heg.$$
  This, together with \eqref{com1} and \eqref{com3}, yields
  \begin{equation}
    0\leq  \heg=\hetpsi \leq  \het.
  \end{equation}
  This completes the proof of Proposition \ref{prop3.3}.
\end{proof}
Similar to Proposition \ref{mini-2005}, Proposition \ref{prop3.3} above also requires 
all the  perturbations are between $[-\frac{b}{4},\frac{b}{4}]$.
The next theorem develops a new method to show the static solution for the PN model obtained in Theorem \ref{thm_steady} is a global minimizer in the sense of Definition \ref{minimizer} by removing the restriction that perturbation must be in the range of $[-\frac{b}{4},\frac{b}{4}]$.

{ \begin{thm}\label{thm_mini}
Let $\mathbf u\in  \ff_\Gamma (\mathbb{R}^2)$ for some $s>\frac12$ be a static solution obtained in Theorem \ref{thm_steady}. Then $\mathbf u$ is a global minimizer of $E$ in the sense that for any $\bm\varphi \in H^1(\mathbb{R}^2\backg)$ such that $\varphi_1|_\Gamma\in H^{\frac12}(\mathbb{R})$ the perturbation energy satisfies
  \begin{equation}\label{2.55}
    \het\geq 0.
  \end{equation}
  Besides,  if $\mathbf u$ is a global minimizer of $E$ then its trace $u_\Gamma$ is also a global minimizer of $E_\Gamma$. Conversely, if $u_\Gamma$ is a global minimizer of $E_\Gamma$ then its elastic extension $\mathbf u$ is a global minimizer of $E$. 
\end{thm}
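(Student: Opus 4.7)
The plan is to reduce \eqref{2.55} to the reduced problem on $\Gamma$ via the elastic extension of Theorem \ref{def_els}, and then handle the reduced problem by a truncation-and-density argument that drops the $[-b/4,b/4]$ range restriction in Proposition \ref{prop3.3}. Throughout I use that the static solution $u_1$ is monotone decreasing from $b/4$ to $-b/4$ (Proposition \ref{thm2.3}(i)), hence $u_1(x)\in[-b/4,b/4]$ for every $x\in\mathbb{R}$.

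First, given any admissible $\bm\varphi\in H^1(\mathbb{R}^2\backg;\mathbb{R}^2)$ with $\varphi_1|_\Gamma\in H^{1/2}(\mathbb{R})$, I let $\bm\psi$ be the elastic extension of $\varphi_1|_\Gamma$ produced by Theorem \ref{def_els}. Exactly as in the proof of Proposition \ref{prop3.3}, Lemma \ref{equiv_lem} combined with the cross-term identity \eqref{cross} gives $\helpsi\leq\hel$ and hence $\hetpsi\leq\het$, since $\bm\psi$ and $\bm\varphi$ share the same trace and therefore the same misfit contribution. By Theorem \ref{energy-re}(ii) one has $\hetpsi=\heg$, so the theorem is reduced to showing $\heg\geq 0$ for every $\varphi_1\in H^{1/2}(\mathbb{R})$.

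To handle this reduced statement, by density of $C_c^\infty(\mathbb{R})$ in $H^{1/2}(\mathbb{R})$ and the continuity of $\varphi_1\mapsto\heg$ on $H^{1/2}$, it suffices to assume $\varphi_1\in C_c^\infty(\mathbb{R})$. Set $w:=u_1+\varphi_1$, $\tilde w:=T(w)$ with the truncation $T(s):=\max(-b/4,\min(s,b/4))$, and $\tilde\varphi_1:=\tilde w-u_1$. Since $u_1\in C^{2,\alpha}$ and $T(u_1)=u_1$, the function $\eta:=\tilde w-w=-(I-T)(w)$ is Lipschitz and compactly supported, so $\eta,\tilde\varphi_1\in H^1(\mathbb{R})\subset H^{1/2}(\mathbb{R})$ and $u_1+\tilde\varphi_1\in[-b/4,b/4]$. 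By the $b/2$-periodicity of $W$ together with \eqref{potential}, the global minima of $W$ lie exactly on $\{\pm b/4+kb/2\}$, so $W(\tilde w)\leq W(w)$ pointwise. For the nonlocal part, algebraic expansion and bilinearity of the double integral form of the $\dot H^{1/2}$ semi-inner product yield
\begin{equation*}
\hat E_{\Gamma_e}(\tilde\varphi_1;u_1)-\hegg=\frac{C}{2}\iint\frac{(\tilde w(x)-\tilde w(y))^2-(w(x)-w(y))^2}{|x-y|^2}\ud x\ud y\leq 0,
\end{equation*}
by the pointwise contractivity $|\tilde w(x)-\tilde w(y)|\leq|w(x)-w(y)|$. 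Adding the two inequalities gives $\hat E_\Gamma(\tilde\varphi_1;u_1)\leq\heg$. Applying Proposition \ref{prop3.3} to $\tilde\varphi_1$ (whose admissibility is ensured by $u_1+\tilde\varphi_1\in[-b/4,b/4]$) delivers $\hat E_\Gamma(\tilde\varphi_1;u_1)\geq 0$, so $\heg\geq 0$ on $C_c^\infty$, and density then yields \eqref{2.55}. The equivalence between global minimizers of $E$ and $E_\Gamma$ is an immediate consequence of the same elastic-extension reduction: if $\mathbf u$ minimizes $E$, then for any $\varphi_1\in H^{1/2}(\mathbb{R})$ taking $\bm\psi$ the elastic extension gives $\heg=\hetpsi\geq 0$; conversely, if $u_1|_\Gamma$ minimizes $E_\Gamma$, then $\het\geq\hetpsi=\heg\geq 0$.

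The main obstacle is justifying the displayed identity above when $u_1\notin\dot H^{1/2}(\mathbb{R})$, so that the individual seminorms $[w]_{\dot H^{1/2}}$ and $[\tilde w]_{\dot H^{1/2}}$ are infinite and only their difference is meaningful. The key observation is that by writing $\tilde w=u_1+\tilde\varphi_1$ and $w=u_1+\varphi_1$, the $u_1\otimes u_1$ contribution in the double integral cancels, leaving $[\tilde\varphi_1]_{\dot H^{1/2}}^2-[\varphi_1]_{\dot H^{1/2}}^2+2\int\eta(-\pt_{xx})^{1/2}u_1\ud x$, all of which are finite: the seminorms because $\varphi_1,\tilde\varphi_1\in H^{1/2}$, and the pairing because $(-\pt_{xx})^{1/2}u_1\in L^2(\mathbb{R})$ by \eqref{Gamma_eq} and Proposition \ref{regu} while $\eta\in L^2$. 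Working at the smooth compactly supported level makes all the intermediate double integrals absolutely convergent, so the regrouping is legitimate, after which density closes the argument.
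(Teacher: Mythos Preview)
Your proof is correct and follows essentially the same route as the paper: reduce to $\Gamma$ via the elastic extension, then truncate $w=u_1+\varphi_1$ into $[-b/4,b/4]$, use the pointwise contractivity $|T(w)(x)-T(w)(y)|\le|w(x)-w(y)|$ for the nonlocal part and $W(T(w))\le W(w)$ for the misfit part, and invoke Proposition~\ref{prop3.3} (equivalently Proposition~\ref{mini-2005}) on the truncated perturbation. The paper phrases the last step as a contradiction argument rather than a direct inequality, but this is cosmetic; your treatment of the infinite-seminorm issue in the final paragraph is in fact more careful than the paper's, which writes the formal expression $\int_\Gamma |(-\pt_{xx})^{1/4}\bar v|^2-|(-\pt_{xx})^{1/4}u_1|^2\,\ud x$ without commenting on why the difference of two infinite quantities makes sense.
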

\begin{proof}
  First, from Theorem \ref{energy-re} and \eqref{com3}, we know for any perturbation $\varphi_1|_{\Gamma}\in H^{\frac12}(\mathbb{R})$
\begin{equation}\label{ttt}
   \heg=\hetpsi\leq \het,
\end{equation}    
  where $\bm\psi$ is the elastic extension of $\varphi_1|_\Gamma.$ 
	Hence  it is sufficient to show that
	\begin{equation}
		 \heg\geq 0,
	\end{equation}
	where
	$\heg= \int_\Gamma |(-\pt_{xx})^{1/4} (u_1+ \varphi_1)|^2 -  |(-\pt_{xx})^{1/4} u_1|^2 \ud x+\int_\Gamma \well(u_1+\varphi_1)-\well(u_1) \ud x$ defined in \eqref{perEgamma}.
We have proved in Proposition \ref{prop3.3} that $\mathbf u$ is a minimizer for all the perturbations satisfying $-\frac{b}{4}\leq (\varphi_1+u_1)|_{\Gamma}\leq \frac{b}{4}$. For the case $  (\varphi_1+u_1)|_{\Gamma}$ not in $[-\frac{b}{4},\frac{b}{4}]$,  we  prove the same result using the method of contradiction below.

Suppose that $\heg<0$ for some $ \bm\varphi  \in C^\infty(\mathbb{R}^2\backslash \Gamma; \mathbb{R}^2)$ and $ \bm\varphi $ has compact support in some $B(R)$ satisfying symmetry \eqref{bcphi}. Let $v:=(\varphi_1+u_1)|_\Gamma$, and a cut off function $\bar{v}:=\max\{\min\{v,\frac{b}{4}\},-\frac{b}{4}\}$. Since $u_1$ is monotone and connect from $\frac{b}{4}$ to $-\frac{b}{4}$, $\bar{\varphi}_1|_\Gamma:=\bar{v}-u_1$ still has compact support in $B(R)$. Denote  $\bar{\bm\varphi }$ as the elastic extension of $\bar{\varphi}_1|_\Gamma$.  Note that $|\bar{v}(x)-\bar{v}(x')|\leq |v(x)-v(x')|$ for any $x,x'\in\mathbb{R}$. Thus
    \begin{align}
    	\frac{1}{2} \int_\Gamma |(-\pt_{xx})^{1/4}\bar{v}|^2-|(-\pt_{xx})^{1/4}u_1|^2\ud x
    	&= \frac{1}{4\pi}\int_{\mathbb{R}}\int_{\mathbb{R}}\frac{|\bar{v}(x)-\bar{v}(x')|^2}{|x-x'|^{2}}-\frac{|u_1(x)-u_1(x')|^2}{|x-x'|^{2}}\ud x \ud x'\nonumber\\
        &\leq  \frac{1}{4\pi}\int_{\mathbb{R}}\int_{\mathbb{R}}\frac{|v(x)-v(x')|^2}{|x-x'|^{2}}-\frac{|u_1(x)-u_1(x')|^2}{|x-x'|^{2}}\ud x \ud x'\nonumber\\
        &= \frac{1}{2} \int_\Gamma |(-\pt_{xx})^{1/4}{v}|^2-|(-\pt_{xx})^{1/4}u_1|^2\ud x.\label{eq..Eu.Eubar.part1}
    \end{align}
    Also note that for $v(x)\geq \frac{b}{4}$ or $v(x)\leq-\frac{b}{4}$, $\bar{v}(x)=\pm \frac{b}{4}$ and $\well(\bar{v}(x))=0\leq \well(v(x))$. Thus
    \begin{equation}
    	\int_\Gamma \well(\bar{v})-\well(u_1)\ud x
    	\leq \int_\Gamma \well(v)-\well(u_1)\ud x.
    	\label{eq..Eu.Eubar.part2}
    \end{equation}
    Combining Eqs. \eqref{eq..Eu.Eubar.part1} and \eqref{eq..Eu.Eubar.part2}, we immediately obtain $\hat{E}_\Gamma(\bar{\bm\varphi}; \mathbf u)\leq \heg<0$. On the other hand, Proposition \ref{prop3.3} implies that $0\leq \hat{E}_\Gamma(\bar{\bm\varphi};\mathbf u)$ since $-\frac{b}{4}\leq \bar{v}= (\bar{\varphi}_1+u_1)|_\Gamma\leq \frac{b}{4}$. This contradiction completes the proof of \eqref{2.55}.
 
    Finally, we clarify the relation between the minimizer of the full system and the minimizer of the reduced system. On one hand, from \eqref{ttt}, $\heg\geq 0$ implies $\het\geq 0$. On the other hand, $\hetpsi\geq 0$ implies $\heg\geq 0$.
\end{proof}
}

\section{Global classical solution to dynamic PN model}\label{sec3}
In this section, we consider the dynamic model with the total energy $E$ in \eqref{E2.2}. Here we focus on the dynamics of a dislocation structure and neglect the inertia effect of the materials. In other words, we consider the overdamped regime, which is a gradient flow of the total energy. This is reasonable since the dislocation dynamics on the slip plane $\Gamma$ has a much larger time scale than the relaxation time of the elastic parts. Hence we take a quasi-static assumption for the upper/lower half space $y>0$ and $y<0$, i.e., $\partial_t u=0$ in $\mathbb{R}^2\backslash \Gamma.$ Indeed the quasi-static assumption leads to a homogenous elastic equation in the upper/lower half space $y>0$ and  $y<0$, which is the key point to establish the relation between the full system and the reduced system in terms of solutions as well as energies. 

Recall the free energy $E_\Gamma$ on the slip plane is
\begin{equation}
E_\Gamma(u_1)=\int_\Gamma |(-\pt_{xx})^{\frac{1}{4}}u_1|^2 \ud x +\int_\Gamma \well(u_1) \ud x;
\end{equation}
see the specific definition for the perturbed energy in \eqref{heg-v}. After the quasi-static approximation,  we can use the elastic extension in Theorem \ref{def_els} to see that a solution to the dynamic system on the slip plane $\Gamma$ gives naturally the displacement fields in the full space.
 In other words, from the relation between the trace $u_1|_{\Gamma}$ and solution $\mathbf u$ in the full space stated in Theorem \ref{thm_steady}, the dynamic model  becomes an elliptic problem with a nonlinear dynamic boundary condition
\begin{equation}\label{P2}
\begin{aligned}
&\nabla \cdot \sigma =0 \quad\text{ in } \mathbb{R}^2\backslash \Gamma,\\
&\partial_t u_1 =-2(-\pt_{xx})^{\frac{1}{2}} u_1 -\well'(u_1) \quad\text{ on }\Gamma,\\
&\sigma_{22}^+ =\sigma_{22}^- \quad\text{ on }\Gamma.
\end{aligned}
\end{equation}
  We also provide explanations using a gradient flow for the full system with different mobilities in Remark \ref{rem2}. Here and in the following, we set some physical constants  to be $1$ for simplicity.

Our main goal in this section is to prove the uniqueness and existence of the classical solution to problem \eqref{P2} with boundary conditions \eqref{symsym}, \eqref{BC} and initial data $u_0$.

Notice the nonliearity $\well(\cdot)$ effects only the first variable $u_1$ and thus by the elastic extension of $u_1|_\Gamma$ we can determine uniquely the solution to Problem \eqref{P2} as long as we can solve $u_1$  on $\Gamma$. We focus on the one dimensional nonlocal equation
\begin{equation}\label{main3.1}
\partial_t u_1 + 2(-\pt_{xx})^{\frac{1}{2}}u_1+\well'(u_1)=0, \quad x\in \mathbb{R}
\end{equation}
with boundary condition
\begin{equation}\label{bc3.6}
u_1(+\infty)=-1; \quad u_1(-\infty)=1.
\end{equation}
We remark the boundary condition here is well-defined since in the end we obtain the dynamic solution $u_1$ in the classical sense by proving the perturbation $v=u_1-u_1^*\in C((0,\8); H^1(\mathbb{R}))$, where $u_1^*$ is the static solution to the reduce model \eqref{Gamma_eq}.

Recall the free energy $E_\Gamma$ for the reduced model  is infinity. As in the last section,  we still use
the perturbated total energy on $\Gamma$ with respect to the trace $u_1^*|_\Gamma$ of the static solution $\mathbf u^*$ obtained in Theorem \ref{thm_steady}
\begin{equation}\label{heg-v}
\hat{E}_{\Gamma}(v; \mathbf u^*)= \int_\Gamma |(-\pt_{xx})^{1/4} u_1|^2 -  |(-\pt_{xx})^{1/4} u_1^*|^2 ~ \ud x +\int_\Gamma \well(u_1)-\well(u_1^*) \ud x,
\end{equation}
which is equivalent to
\begin{align}\label{eqvEn}
\hegu=&  \int_\Gamma |(-\pt_{xx})^{1/4} (u_1-u_1^*)|^2  ~ \ud x+2\int_\Gamma (-\pt_{xx})^{1/4} u_1^* (-\pt_{xx})^{1/4}(u_1-u_1^*) ~\ud x+\int_\Gamma \well(u_1)-\well(u_1^*) \ud x\\
=& \int_\Gamma |(-\pt_{xx})^{1/4} (u_1-u_1^*)|^2  ~ \ud x-\int_\Gamma \well'( u_1^*)  (u_1-u_1^*) ~\ud x+\int_\Gamma \well(u_1)-\well(u_1^*) \ud x,\nonumber\\
=&\int_{\Gamma} |(-\pt_{xx})^{1/4} v|^2 -v\well'(u_1^*) + \well(v+u_1^*)-\well(u_1^*) \ud x,
\nonumber
\end{align}
due to $u^*_1$ is the static solution satisfying \eqref{Gamma_eq}.
Thus the  reduced system on $\Gamma$ has its own gradient flow structure
\begin{equation}\label{E_G_d}
\partial_t u_1 = -\frac{\delta \hegu}{\delta u_1}.
\end{equation}
{{In the following subsection, we will establish the global classical solution to the perturbation $v=u_1-u_1^*$, which is the difference between $u_1$ and the static solution $u_1^*$.  }}

{
\begin{rem}\label{rem2}
We can also explain the quasi-static assumption by a gradient flow with different mobilities.
In general, for a over-damped dynamical system, the governing equation is given by $V=Mf$, where $V$ is the time derivative of parameters of the state, $f$ is the (negative) variation of the free energy, and $M$ is the corresponding mobility which is basically the reciprocal of the damping coefficient. For a crystalline solid with dislocations, the mobility is not homogeneous in the sense that it has different magnitude in the elastic continua and on the slip plane, denoted as $M$ and $M_\Gamma$ respectively. Experimental observations show that $M\gg M_\Gamma$ for most dislocations. In the following we assume $M=O(1/\eps)$ and $M_\Gamma=O(1)$ where $\eps$ is a small parameter.
Notice all the variables are dimensionless in our paper.
Instead of defining $E$ as \eqref{E2.2}, we use another independent variable $\tilde{\mathbf u}$ to indicate the fast bulk variable and consider gradient flow in the bulk and on the $\Gamma$ separately. Notice  the elastic extension of $u_\Gamma$ is unique. After taking limit $\eps\to 0$, we will see the trace of the independent variable $\tilde{\mathbf u}$ is indeed consistent with $u_\Gamma$, i.e. $\tilde{u}_1(x,0)=:\tilde{u}|_{\Gamma}=u_\Gamma:=u_1(x,0)$.   Define
$$E(\tilde{u}, u_{\Gamma}):= \int_{\mathbb{R}^2\backslash \Gamma} \frac{1}{2} \tilde{\eps}: \tilde{\sigma} \ud x \ud y +\int_{\Gamma}\well(u_{\Gamma})\ud x, $$
where $\tilde{\eps}$ and $\tilde{\sigma}$ are the corresponding stain and stress tensor of $\tilde{\mathbf u}$. We consider the gradient flow w.r.t $\tilde{\mathbf u}$ and $u_{\Gamma}$
\begin{equation}
  \frac{\ud }{\ud t}\left( \begin{array}{c}
                             \tilde{u} \\
                             u_\Gamma
                           \end{array}
   \right)= - \left[ \begin{array}{cc}
                       M & 0 \\
                       0 & M_\Gamma
                     \end{array}
    \right] \frac{\delta E(\tilde{u}, u_\Gamma)}{\delta (\tilde{u}, u_\Gamma)}.
\end{equation}
Similar to \eqref{tem2.8}-\eqref{tem2.10}, 
the gradient flow turns out to be
\begin{align*}
 &\frac{1}{M}\partial_t \tilde{u}=- \nabla \cdot \tilde{\sigma},\quad x\in \mathbb{R}^2\backslash \Gamma,\\
 &\frac{1}{M_\Gamma}\partial_t u_{\Gamma} = - [\tilde{\sigma}_{12}^{+}+\tilde{\sigma}_{12}^{-}+\well'(u_\Gamma)], \quad x\in \Gamma,\\
 &\tilde{\sigma}_{22}^{+}=\tilde{\sigma}_{22}^{-}, \quad x\in \Gamma.
\end{align*}
Let $\eps\to 0$, $\frac{1}{M} \to 0$, which indicates $- \nabla \cdot \tilde{\sigma}=0$ then $\tilde{\mathbf u}$
is the elastic extension of $u_\Gamma$ and coincides with $\mathbf u$ . Thus Lemma \ref{lem2.2} part (ii) shows that
$$\tilde{\sigma}_{12}^{+}=\tilde{\sigma}_{12}^{-}= 2(-\pt_{xx})^{\frac{1}{2}}u_{\Gamma}, \quad x\in\Gamma.$$
We obtain the reduced dynamic system on $\Gamma$ \eqref{main3.1}.

\end{rem}

}

\subsection{Global classical solution}
In this section, we will use the theory for analytic semigroup to  establish the existence and uniqueness of the global classical solution to \eqref{main3.1} by studying the existence and uniqueness in terms of the perturbation fields.
In terms of the reference field $\mathbf u^*$ such that $2\ptf u_1^* = -\well'(u_1^*)$, set the perturbation $v(x,t):= u_1(x,t)-u_1^*(x)$.
Then from the dynamic equation \eqref{main3.1}, we know the dynamic equation for $v$ is
\begin{equation}\label{v-eq}
  \partial_t v = -2\ptf v -\well'(v+u_1^*)+ \well'(u^*_1)
\end{equation}
with initial data $v_0(x)=u_1(x,0)-u^*_1(x)$.
Denote $H^s(\mathbb{R})$ as the (fractional) Sobolev space with norm denoted as $\|\cdot\|_s$. 
Denote $\|\cdot\|$ as the standard $L^2(\mathbb{R})$ norm.

Define the free energy for $v$ as
\begin{equation}
 \F(v):= \int_{\Gamma} |(-\pt_{xx})^{1/4} v|^2 -v\well'(u_1^*) + \well(v+u_1^*) \ud x.
\end{equation}
Notice this energy differs with \eqref{eqvEn} with a term $\int_\Gamma \well(u_1^*) \ud x$ whose variation is $0$.
Then $v$ satisfies the gradient flow structure
$$\partial_t v = -\frac{\delta \F(v)}{\delta v}. $$
Define
  \begin{equation}\label{A2.3}
    A v := (\ptf+I) v,
  \end{equation}
  \begin{equation}\label{TT}
    T( v) := \well'(u_1^*)-\well'(v+u_1^*)+v.
  \end{equation}
Then the \eqref{v-eq} becomes
  \begin{equation}\label{v-eq-n}
    \partial_t   v = -A v + T(v).
  \end{equation}
Since the spectrum for $A$ is $\sigma(A)=[1,+\8),$ from \cite[Definition 1.3.1]{Henry}, $A$ is a sectorial operator from $D(A)=H^1(\mathbb{R})\subset L^2(\mathbb{R})\to L^2(\mathbb{R})$ in the sense that
$$S_{1,\beta}:= \{\lambda\,|\, \beta \leq |\arg(\lambda-1)|\leq \pi, \, \lambda\neq 1 \}$$
is in the resolvent set of $A$ and
\begin{equation}
  \|(\lambda-L_s)^{-1}\|\leq \frac{1}{|\lambda-1|} \quad \text{ for all } \lambda\in S_{1, \beta}.
\end{equation}

The existence and uniqueness of the global classical solution to \eqref{v-eq} is stated as follows.
 \begin{thm}\label{strongslu}
 Assume initial data $v_0(x):=u_0(x)-u_1^*(x)\in H^{\frac{1}{2}}(\mathbb{R}).$
 \begin{enumerate}[(i)]
    \item There exists a global unique solution
 \begin{equation}
   v\in C^1([0,\8); L^2(\mathbb{R}))\cap C((0,\8); H^1(\mathbb{R})) \quad
 \end{equation}
to \eqref{v-eq-n} such that $v(x,0)=v_0(x)$ and $\partial_t v , Av, T(v)\in L^2(\mathbb{R})$ for $t>0$ and the equation \eqref{v-eq-n} is satisfied in $L^2(\mathbb{R})$ for any $t>0$;
    \item the solution can be expressed by
\begin{equation}\label{mild}
  v(t) = e^{-At}v_0 + \int_0^t e^{-A(t-\tau)} T(v(\tau)) \ud \tau;
\end{equation}
    \item for any $k,j\in \mathbb{N}^+$ and $\delta >0$ there exist constants $c,\, C_{\delta, k, j}$ such that
\begin{equation}\label{highreg}
\begin{aligned}
  v\in C^k((0,\8);H^j(\mathbb{R}));\\
  \|\pt_t^k v(\cdot, t) \|_j \leq C_{\delta, k,j} e^{c t}, \quad  t\geq \delta;
  \end{aligned}
\end{equation}
\item we have energy identity
\begin{equation}\label{dissi}
 \frac{\ud \F(v(t))}{\ud t} =  - \int_{\mathbb{R}} [-(-\pt_{xx})^{1/2} v -\well'(v+u_1^*)+\well'(u_1^*)]^2 \ud x  =: -\mathcal{Q}(v(t))\leq 0,
\end{equation}
and furthermore, if for misfit energy $E_{\mathrm{mis}}$ defined in \eqref{Emis}, the initial data $v_0(x)$ satisfies $E_{\mathrm{mis}}(v_0+u_1^*)<\8$, we have
\begin{equation}
\F(v(t))\leq \F(v_0), \quad \text{ for any }t\geq 0.
\end{equation}
 \end{enumerate}
\end{thm}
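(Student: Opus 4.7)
I would fit equation \eqref{v-eq-n} into the Henry--Pazy framework for semilinear parabolic equations driven by a sectorial operator, and then upgrade the resulting mild solution to a classical one via the smoothing properties of the analytic semigroup. First, the operator $A=(-\pt_{xx})^{1/2}+I$ is self-adjoint and nonnegative on $L^2(\mathbb{R})$ with domain $H^1(\mathbb{R})$, and the resolvent estimate $\|(\lambda-A)^{-1}\|\leq |\lambda-1|^{-1}$ already displayed in the excerpt confirms sectoriality; thus $-A$ generates an analytic semigroup $\{e^{-At}\}_{t\geq 0}$ on $L^2(\mathbb{R})$ with the standard bound $\|A^\alpha e^{-At}\|\leq C_\alpha t^{-\alpha}e^{-t}$ for $\alpha\geq 0$, $t>0$. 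Second, because $W\in C^{2,\alpha}$ is $b/2$-periodic and hence $\|W''\|_\infty<\8$, the nonlinearity $T$ defined in \eqref{TT} obeys $\|T(v)-T(w)\|\leq (1+\|W''\|_\infty)\|v-w\|$ on $L^2(\mathbb{R})$ and satisfies $T(0)=0$. A Banach fixed-point argument on the Duhamel formula \eqref{mild} in $C([0,T];L^2(\mathbb{R}))$ then produces a unique mild solution, and the global Lipschitz bound together with a Gronwall estimate extends it to all of $[0,\8)$, giving item (ii).

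For items (i) and (iii), I would bootstrap. Analyticity of $e^{-At}$ yields $e^{-At}v_0\in D(A^k)\hookrightarrow H^k(\mathbb{R})$ for all $k\geq 0$ and $t>0$, with $\|A^k e^{-At}v_0\|\leq C_k t^{-k+1/2}e^{-t}\|v_0\|_{H^{1/2}}$ because $v_0\in D(A^{1/2})$. Inserting this into \eqref{mild}, using Proposition \ref{regu} to guarantee $u_1^*\in H^s$ for every $s>1/2$, and using that the Nemytskii operator $v\mapsto W'(v+u_1^*)$ maps $H^j\to H^j$ for $j\geq 1$ since $W'\in C^{1,\alpha}$, I would inductively obtain $v\in C((0,\8);H^j(\mathbb{R}))$ for every $j\in\mathbb{N}$. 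In particular $Av(t),\,T(v(t)),\,\pt_t v(t)\in L^2(\mathbb{R})$ for every $t>0$ and the equation holds pointwise in $L^2$, establishing item (i). Time-differentiating the equation and iterating the same bootstrap yields the full $C^k((0,\8);H^j(\mathbb{R}))$ regularity in item (iii); the exponential factor $e^{ct}$ in \eqref{highreg} is the price paid for absorbing constants along the iteration.

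For item (iv), I would multiply $\pt_t v=-2(-\pt_{xx})^{1/2}v-W'(v+u_1^*)+W'(u_1^*)$ by $\pt_t v$ and integrate in $x$. The left-hand side becomes $\|\pt_t v\|^2=\mathcal{Q}(v(t))$, while the chain rule converts the right-hand side into $-\tfrac{d}{dt}F(v(t))$, with all integrations by parts justified by the smoothness from the previous step and the far-field behavior of $v(t)\mp b/4$. Integrating in time over $[\delta,t]$ and letting $\delta\downarrow 0$ yields the monotonicity $F(v(t))\leq F(v_0)$, provided $F(v_0)<\8$; the latter holds whenever $E_\mathrm{mis}(v_0+u_1^*)<\8$, because the three pieces of $F(v_0)$ are then controlled by $\|v_0\|_{H^{1/2}}^2$, by a Cauchy--Schwarz estimate on $\int v_0 W'(u_1^*)\,\ud x$ (using $W'(u_1^*)\sim 1/|x|\in L^2$ at infinity from Proposition \ref{thm2.3}(ii) together with boundedness of $W'$), and by the assumed finiteness of the misfit term.

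The hard part is the passage from mild to classical solution at $t=0^+$ and the resulting continuity of $F(v(t))$ at $t=0$ needed to close the monotonicity argument. Because $v_0\in H^{1/2}=D(A^{1/2})$ rather than $D(A)=H^1$, the chain-rule identity is available only for $t>0$ and must be extended by a limiting argument in $\delta\downarrow 0$; one must check that each of the three pieces of $F$ is continuous along the flow as $t\to 0^+$ (the Dirichlet piece by $H^{1/2}$-continuity of $v(t)$ coming from $v_0\in D(A^{1/2})$, the linear piece by $L^2$-continuity of $v(t)$ tested against $W'(u_1^*)\in L^2(\mathbb{R})$, and the misfit piece by dominated convergence using $\|W''\|_\infty<\8$ together with the initial finiteness hypothesis). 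Everything upstream of this is the standard semigroup package once sectoriality of $A$ and the global Lipschitz character of $T$ are in hand.
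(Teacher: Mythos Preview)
Your overall architecture matches the paper's: sectoriality of $A$, global Lipschitz bound on $T$ from $\|W''\|_\infty<\infty$, Picard iteration on the Duhamel formula in $C([0,T];L^2)$, then regularity bootstrap. The energy identity in (iv) is also handled the same way. There is, however, a genuine gap in your mild-to-classical upgrade, and your final paragraph mislocates the difficulty.

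The gap is the base case of your spatial bootstrap. From the mild solution you only know $v\in C([0,\infty);L^2)$, hence only $T(v(\tau))\in L^2$. Applying $A$ to the Duhamel integral and using the analytic bound $\|Ae^{-A(t-\tau)}\|\le C(t-\tau)^{-1}$ produces a nonintegrable singularity at $\tau=t$, so you cannot conclude $v(t)\in D(A)=H^1$ this way. Your Nemytskii statement ``$H^j\to H^j$ for $j\ge 1$'' is only usable \emph{after} you already know $v(\tau)\in H^1$, which is precisely what you are trying to prove; the induction never starts. The paper closes this gap by a different mechanism: it first proves that $t\mapsto v(t)$ is $C^{1/2}$ in time with values in $L^2$ (here the hypothesis $v_0\in D(A^{1/2})$ enters via $\|(e^{-Ah}-I)v_0\|\le Ch^{1/2}\|A^{1/2}v_0\|$, combined with a Gronwall argument on the Duhamel representation), transfers this H\"older regularity to $t\mapsto T(v(t))$ by the Lipschitz bound, and then invokes \cite[Lemma~3.2.1]{Henry} to conclude $\int_0^t e^{-A(t-\tau)}T(v(\tau))\,\ud\tau\in D(A)$ for $t>0$. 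Only after this does the $H^1\to H^1$ mapping property of $T$ become relevant, and then the higher-order bootstrap in your (iii) proceeds by differentiating the equation in $t$ and $x$ exactly as you suggest. An alternative repair of your argument, closer in spirit to what you wrote, is to observe that the pointwise Lipschitz estimate $|T(a)-T(b)|\le(1+\|W''\|_\infty)|a-b|$ actually yields $T:H^{1/2}\to H^{1/2}$ Lipschitz (via the Gagliardo seminorm), run the fixed point directly in $C([0,T];H^{1/2})$, and then the Duhamel integral picks up the missing half-derivative since $\|A^{1/2}e^{-A(t-\tau)}\|\le C(t-\tau)^{-1/2}$ is integrable.

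A minor correction: in your discussion of (iv) you refer to ``the far-field behavior of $v(t)\mp b/4$''. The perturbation $v=u_1-u_1^*$ itself decays at infinity (both $u_1$ and $u_1^*$ satisfy the same asymptotics \eqref{BC}); it is $u_1$, not $v$, that approaches $\mp b/4$.
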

\begin{proof}
Step 1. We state some properties for $T$ defined in \eqref{TT}.
From \cite[Theorem 1.6]{XC2005} we  know the static solution
 $$|1+u_1^*|\leq \frac{c}{1+|x|} \text{ for }x>0, \quad |1-u_1^*|\leq \frac{c}{1+|x|} \text{ for }x<0,  $$
 and
 $$|\partial_x u_{1}^{*}|\leq \frac{c}{1+x^2},$$
  which shows $\|\partial_x u_{1}^{*}\|<+\infty$.
Then we have

(a) $T: L^2(\mathbb{R})\to L^2(\mathbb{R})$  is global Lipschiz, i.e.  there exists a constant $L$ such that
\begin{equation}\label{GLip}
\|T(v_1)-T(v_2)\|\leq (1+\max|\well'|)\|v_1-v_2\|\leq L\|v_1-v_2\|;
\end{equation}

(b) if $v(\cdot)\in H^1(\mathbb{R})$, then $T(v(\cdot))\in H^1(\mathbb{R})$. Indeed,
\begin{equation*}
 \|\pt_x T(v)\|\leq (1+\max|\well''|)\|v_x\|+ \pi \|v\|,
\end{equation*}
which implies
\begin{equation}
  \|T(v)\|_1 \leq c\|v\|_1.
\end{equation}

Step 2.
 Firstly, it is easy to check that the operator $A$ defined in \eqref{A2.3} is m-accretive in $L^2(\mathbb{R})$. Indeed we know $\mathrm{Re}\la Ax, x\ra\geq 0$ for all $x\in D(A)$ and $\sigma(A)=[1,+\8).$ Therefore $A$ is an infinitesimal generator of a linear strongly continuous semigroup of contractions and $\|e^{-At}\|\leq 1$.
  Secondly, from global Lipschitz condition \eqref{GLip},  there exists a unique mild solution
  expressed by  \eqref{mild} and $v\in C([0,+\8); L^2(\mathbb{R}))$.

  Step 3.
 H\"older continuity in $t$ of $v$ and $T(v)$.
  \begin{align}\label{tma3}
    &v(t+h)-v(t)\\
    =&e^{-At}(e^{-Ah} v_0 - v_0)+ \int_0^{t+h} e^{-A(t+h-\tau)}T(v(\tau))\ud \tau- \int_0^t e^{-A(t-\tau)} T(v(\tau)) \ud \tau\nonumber\\
    =&e^{-At}\big[(e^{-Ah} v_0 - v_0)+\int_0^h e^{-A(h-\tau)}T(v(\tau)) \ud  \tau\big]+\int_0^t e^{-A(t-\tau)} [T(v(\tau+h))-T(v(\tau))] \ud \tau\nonumber\\
    =&e^{-At}(v(h)-v_0)+ \int_0^t e^{-A(t-\tau)} [T(v(\tau+h))-T(v(\tau))] \ud \tau \nonumber
  \end{align}
  Since $\|e^{-At}\|\leq 1$,
    \begin{align*}
    \|v(t+h)-v(t)\|
    \leq \|v(h)-v_0\| +  \int_0^t 2\|v(\tau+h)-v(\tau)\|\ud \tau.
  \end{align*}
  Then by Gronwall's inequality, we have
  \begin{equation}\label{vLip_0}
    \|v(t+h)-v(t)\|\leq \|v(h)-v_0\|e^{2t}.
  \end{equation}
On the other hand,
\begin{align}\label{At2}
 {v(h)-v_0} = (e^{-Ah}-I)v_0 +  \int_0^h e^{-A(h-\tau)} [T(v(\tau))-T(v_0)+T(v_0) ]\ud \tau.
\end{align}
Then from \eqref{GLip} and $\|e^{-At}\|\leq 1$ we know
\begin{align*}
  &\|v(h)-v_0\|\leq \|(e^{-Ah}-I)v_0\|+ L\int_0^h \|v(\tau)-v_0\| \ud \tau + hL \|v_0\| \\
  =& hL\|v_0\|+Ch^{1/2}\|A^{1/2}v_0\| + L \int_0^h \|v(\tau)-v_0\| \ud \tau,
\end{align*}
where we used the fact $A$ is sectorial and thus  from \cite[Theorem 1.4.3]{Henry}
$$\|(e^{-At}-I)v_0\|\leq C h^{\frac12}\|A^{1/2}v_0\|.$$

Thus Gronwall's inequality gives us
\begin{equation}
\|v(h)-v_0\|\leq h^{\frac12}(h^{\frac12}L\|v_0\|+C\|A^{\frac12}v_0\|)e^{Lh},
\end{equation}
which, together with \eqref{vLip_0}, leads to the H\"older continuity of $v(t)$
\begin{equation}\label{vLip}
\left\| \frac{v(t+h)-v(t)}{h^{\frac12}} \right\|\leq c \|v_0\|_{\frac12} e^{2t+Lh}.
\end{equation}
Then from \eqref{GLip} we concludes the H\"older continuity of $T(v(t))$
\begin{equation}\label{GLip_t}
\left\| \frac{T(v(t+h))-T(v(t))}{h^{\frac12}} \right\|\leq  c \|v_0\|_{\frac12} e^{2t+Lh}.
\end{equation}
Therefore by \cite[Lemma 3.2.1]{Henry} we know for $t>0$
\begin{equation}
\int_0^t e^{-A(t-\tau)} T(v(\tau)) \ud \tau \in D(A).
\end{equation}
Notice also
$$\|Ae^{-At}v_0\|\leq \frac{c}{t}e^{-t}$$
for $t>0$, which shows $e^{-At}v_0\in D(A)$ for $t>0.$
Therefore by mild solution \eqref{mild} we concludes $v \in D(A)$ and $\partial_t v =-Av+T(v)\in L^2$ for $t>0$, which completes the proof for (i), (ii).

  Step 4. Higher order regularities.

Set $w_1:=\pt_t v$ and $w_2:=\pt_x v$. Then
$$\pt_t T(v(t))=T'(v)\partial_t v\in C([0,T]; L^2(\mathbb{R}))$$
and
$$\pt_x T(v(t))=(1-\well'(u_1^*+v))\partial_x v-(\well'(u_1^*+v)-\well'(u_1^*))\partial_x u_{1}^*\in C([0,T]; L^2(\mathbb{R})).$$
Therefore we can repeat Step 2 and 3 for
\begin{equation}
  \partial_t w_{1}+ Aw_1 = T'(v)w_1
\end{equation}
and
\begin{equation}
  \partial_t w_{2}+Aw_2 = (1-\well'(u_1^*+v))w_2-(\well'(u_1^*+v)-\well'(u_1^*))\partial_xu_{1}^*
\end{equation}
to obtain
\begin{align*}
  &w_1, w_2 \in C((0,\8);L^2(\mathbb{R}))\cap C((0,\8); H^1(\mathbb{R}))\\
  &\partial_t w_{t}, \partial_t w_{t} \in C((0,\8); L^2(\mathbb{R}))
\end{align*}
which concludes $v$ is a global classical solution to \eqref{v-eq} and satisfies \eqref{highreg}.

Step 5. \eqref{dissi} is directly from \eqref{v-eq} and above regularity properties.
Notice that if the initial data $v_0(x)$ satisfies $E_{\mathrm{mis}}(v_0+u_1^*)<\8$, then from  $\|u^*_1(\cdot)\|<c$ and
$v_0(x)\in H^{\frac{1}{2}}(\mathbb{R})$ we have $\F(v_0)<\8$
and thus
$$\F(v(t))\leq \F(v_0)<\8.$$
\end{proof}

\section*{Acknowledgement}
The work of YG and YX was supported by the Hong Kong Research Grants Council General Research Fund 16313316. JGL was supported in part by the National Science Foundation (NSF) under award DMS- 1812573 and the NSF grant RNMS-1107444 (KI-Net).

\end{document}